\numberwithin{equation}{section}
\title{Differentiability of SDEs with drifts of super-linear growth}
\author{
\normalsize Peter Imkeller$^1$\\
        \small imkeller@mathematik.hu-berlin.de
\and
\normalsize Gon\c calo Dos Reis$^{2,3}$ \footnote{G. dos Reis acknowledges support from the \emph{Funda{\c c}$\tilde{\text{a}}$o para a Ci$\hat{e}$ncia e a Tecnologia} (Portuguese Foundation for Science and Technology) through the project UID/MAT/00297/2013 (Centro de Matem\'atica e Aplica\c c$\tilde{\text{o}}$es CMA/FCT/UNL).} \\ 
        \small  G.dosReis@ed.ac.uk
\and
\normalsize William Salkeld$^2$ \footnote{W. Salkeld acknowledges support from the Laura Wisewell travel fund}\\ 
        \small  w.j.salkeld@sms.ed.ac.uk 
}
\date{ 
\normalsize $^1$Humboldt Universit\"at zu Berlin\\%
    $^2$University of Edinburgh\\
		$^3$Centro de Matem\'atica e Aplica\c c$\tilde{\text{o}}$es, (CMA), FCT, UNL\\
		$^4$MIGSAA\\ [2ex]%
		\currenttime, \ddmmyyyydate\today 
		}
\theoremstyle{plain}
\newtheorem{theorem}{Theorem}[section]
\newtheorem{lemma}[theorem]{Lemma}
\newtheorem{proposition}[theorem]{Proposition}
\newtheorem{corollary}[theorem]{Corollary}
\newtheorem{definition}[theorem]{Definition}
\newtheorem{remark}[theorem]{Remark}
\newtheorem{example}[theorem]{Example}
\newtheorem{assumption}[theorem]{Assumption}
\newcommand{\bD}{\mathbb{D}}
\newcommand{\bE}{\mathbb{E}}
\newcommand{\bL}{\mathbb{L}}
\newcommand{\bN}{\mathbb{N}}
\newcommand{\bP}{\mathbb{P}}
\newcommand{\bQ}{\mathbb{Q}}
\newcommand{\bR}{\mathbb{R}}
\newcommand{\bS}{\mathbb{S}}
\newcommand{\cE}{\mathcal{E}}
\newcommand{\cF}{\mathcal{F}}
\newcommand{\cH}{\mathcal{H}}
\newcommand{\cL}{\mathcal{L}}
\newcommand{\cS}{\mathcal{S}}
\DeclareMathOperator*{\esssup}{ess\,sup}
\newcommand{\1}{\mathbbm{1}}
\begin{document}

\selectlanguage{english}

\maketitle
\renewcommand*{\thefootnote}{\arabic{footnote}}

\begin{abstract} 
We close an unexpected gap in the literature of Stochastic Differential Equations (SDEs) with drifts of super linear growth and with random coefficients, namely, we prove Malliavin and Parametric Differentiability of such SDEs. The former is shown by proving  Stochastic G\^ateaux Differentiability and Ray Absolute Continuity. This method enables one to take limits in probability rather than mean square or almost surely bypassing the potentially non-integrable error terms from the unbounded drift. This issue is strongly linked with the difficulties of the standard methodology of \cite{nualart2006malliavin}*{Lemma 1.2.3} for this setting. Several examples illustrating the range and scope of our results are presented. 

We close with parametric differentiability and recover representations linking both derivatives as well as a Bismut-Elworthy-Li formula.
\end{abstract} 
{\bf Keywords:} Malliavin Calculus, Parametric differentiability, monotone growth SDE, one-sided Lipschitz, Bismut-Elworthy-Li formula.

\vspace{0.3cm}

\noindent
{\bf 2010 AMS subject classifications:}
Primary: 
60H07. 
Secondary: 
60H10, 60H30


%
\footnotesize
\tableofcontents
\normalsize 

\section{Introduction}

In this manuscript we work with the class of Stochastic Differential Equations (SDEs) with drifts satisfying a super-linear growth (locally Lipschitz) and a monotonicity condition (also called one-sided Lipschitz condition); the coefficients are furthermore assumed to be random. This class of SDEs appears ubiquitously in mathematics and engineering, for example, the stochastic Ginzburg-Landau equation in the theory of superconductivity; Stochastic Verhulst equation;  Feller diffusion with logistic growth; Protein Kinetics and others, see \cite{HutzenthalerJentzenKloeden2011} and references. 

There is a wealth of results on differentiability and properties of SDEs in general. However, it is surprising that the landscape is (to the best of our knowledge) empty with respect to the superlinear growth setting apart from \cite{tahmasebi2013weak} which we discuss below. Additionally, in \cite{RIEDEL2017283} the authors discuss stochastic flows in rough path sense for a class related to ours but only up to linear growth; and using analytical tools, \cite{Cerrai2001}*{Chapter 1} and \cite{Zhang2016} require ellipticity and deterministic maps to obtain some results in the same vein as ours.  Our arguments are fully probabilistic.

\emph{Malliavin differentiability.} To establish Malliavin differentiability for an SDE with solution $X$ and with monotone drifts, the most natural path to follow is to try to apply \cite{nualart2006malliavin}*{Lemma 1.2.3} by employing a truncation procedure. This yields a sequence $X^n$ of SDEs with Lipschitz coefficients converging to $X$. Under said Lipschitz conditions the family $X^n$ is  Malliavin differentiable under suitable differentiability assumptions, with derivative $DX^n$, and one is able to appeal to \cite{nualart2006malliavin}*{Lemma 1.2.3} to conclude the Malliavin differentiability of $X$ if one is able to show that $\sup_n \bE\big[ \|D X^n\|_{H} \big]<\infty$. The truncation procedure, even smoothed out, destroys the monotonicity and, in the multi-dimensional case, it is notoriously difficult to establish the mentioned uniform bound.

To the best of our knowledge this question was studied only in \cite{tahmasebi2013weak}. The authors employ a truncation procedure in order to use \cite{nualart2006malliavin}*{Lemma 1.2.3}. Unfortunately their \cite{tahmasebi2013weak}*{Lemma 4.1} is incorrect. The constant $M_l$ presented in their equation (4.1) depends on the truncation level $n$ in a non-uniformly bounded way; the reader is invited to inspect the 2nd line of page 879. This lemma, which we were not able to fix, is used subsequently to establish the main result in \cite{tahmasebi2013weak}. 

We prove Malliavin Differentiability through a less well-known method developed by Sugita \cites{sugita1985} which uses the concepts of \emph{Ray Absolute Continuity} and \emph{Stochastic G\^ateaux Differentiability} see also the posterior developments by \cites{MastroliaPossamaiReveillac2017,imkeller2016note}. This approach is detailed in Section \ref{section:RAC+SGD} below. The merit of this method is that the limit for the Stochastic G\^ateaux derivative is a convergence in probability statement rather than a convergence in mean square statement. Put simply, this allows us to avoid cases such as the ``Witches Hat'' function where errors are non-integrable but converge to zero almost surely. 

We study the case where the coefficients of the SDE are random. We follow the ideas of \cite{geiss2016malliavin} and present two different sets of conditions which allow for Malliavin Differentiability. One set of conditions is sharp but somewhat difficult to use in practice. The other is much easier to verify but not sharp. We also provide examples discussing the scope and limitations of our approach. 

\emph{Parametric differentiability.} The second contribution of this work is parametric differentiability for SDEs of this type and in particular its implications for the classical case of deterministic coefficients. The methodology takes inspiration from the Malliavin differentiability section and we prove G\^ateaux and Fr\'echet differentiability with respect to the SDEs parameters. 

\emph{Representations, Absolute continuity of the law and Bismut-Elworthy-Li formulae.} We bridge both differentiability results by recovering (a) representation formulae linking the Malliavin derivative and the parametric one; (b) establishing  absolute continuity of the solution's Law; and (c) a Bismut-Elworthy-Li formula. 

\emph{Technical results.}
In this setting the drift term is not bounded and conditional on the coefficients' integrability the solution may not be sufficiently integrable - see Remark \ref{rem:HsarpCondsFubini} and the examples in Section \ref{sec:examples}. This means that the error terms appearing in proofs of differentiability will not be assumed to be sufficiently integrable. We negotiate this obstacle by proving everything in convergence in probability and ensuring that adequate conditions are met so that results can be lifted to the relevant setting of mean square and almost sure convergence. Proposition \ref{Proposition:GronwallConProb} contains a Gr\"onwall type inequality for the topology of Convergence in Probability that is of independent interest and is key to the methods used in this paper.
\medskip

This paper is organized as follows. In Section \ref{sec:preliminaries}, we lay out the notation and setting for this paper and recall a few baseline results from the literature. In Section \ref{sec:monotoneSDE} we prove Malliavin differentiability of SDEs of the form \eqref{eq:SDE}. There are two main results: Theorem \ref{theo-Mall-diff-monotone-SDEs} which provides a sharp method and Theorem \ref{theo:WhatsHere} which has easier to verify Assumptions but is not sharp. There is a collection of examples which explain the merits and limitations of the results we present.  In Section \ref{sec:Parametric differentiability}, we use similar methods to describe the Jacobian of the SDE. Finally, Section \ref{section:Applications} bridges Section \ref{sec:monotoneSDE} and Section \ref{sec:Parametric differentiability} and contains the so-called representations formulae and  existence and smoothness results for densities.

\section*{Acknowledgments}
The authors would like to thank C.~Geiss (U.~of Jyv\"askyl\"a), A.~Steinicke (U.~of Graz) and A.~R\'eveillac (U.~of Toulouse) for their helpful comments. In particular to the two referees whose reviews led to nontrivial improvements of the initial manuscript.

\section{Preliminaries}
\label{sec:preliminaries}

\subsection{Notation and spaces}

We denote by $\bN=\{1,2,\cdots\}$ the set of natural numbers and $\bN_0=\bN\cup \{0\}$; $\bR$ denotes the set of real numbers respectively; $\bR^+=[0,\infty)$. By $a \lesssim b$ we denote the relation $a \leq C\, b$ where $C>0$ is a generic constant independent of the relevant parameters and may take different values at each occurrence. By $\lfloor x \rfloor$ we denote the largest integer less than or equal to $x$. Let $A$ be a $d\times m$ matrix, we denote the Transpose of $A$ by $A^T$. When $A$ is a matrix, we denote $|A|$ by Tr$(A\cdot A^T)^{{1}/{2}}$. 

Let $f:\bR^d \to \bR$ be a differentiable function. Then we denote $\nabla f$ to be the gradient operator and $H[ f]$ to be the Hessian operator. $\partial_{x_i}$ is the 1st partial derivative wrt $i$-th position. $\1_A$ denotes the usual indicator function over some set $A$

We use standard big $O$ and little $o$ notation to mean that for $f_n, f>0$

\begin{align*}
 f_n = O(f) \ \  \iff \ \  \lim_{n\to \infty} \frac{f_n}{f}=C<\infty
\qquad \textrm{and}\qquad 
f_n = o(f) \ \ \iff \ \ \lim_{n\to \infty} \frac{f_n}{f}=0. 
\end{align*}
where $C$ is a constant independent of the limiting variable. 

Let $(\Omega, \cF, \bP)$ be a probability space carrying an $m$-dimensional Brownian Motion on the interval $[0,T]$; the Filtration on this space satisfies the usual assumptions. We denote by $\bE$ and $\bE[\cdot|\cF_t]$ the usual expectation and conditional expectation operator (wrt to $\bP$) respectively. For a random variable $X$ we denote its probability distribution (or Law) by $\cL^X$; the law of a process $(Y(t))_{t\in[0,T]}$  at time $t$ is denoted by $\cL_t^Y$.

Let $p\in[1,\infty)$. We introduce the following spaces and when there is no ambiguity about the underlying spaces or measures, we omit their arguments.
\begin{itemize}
\item Let $C([0,1])$ denote the space of continuous functions $f:[0,1]\to \bR$ endowed with the uniform norm $\| f \|_\infty = \sup_{s\in[0,T]} |f(s)|$ and $\|f\|_{\infty, t} = \sup_{s\in[0,t]} |f(s)|$; $C_b([0,1])$ its subspace of bounded functions; $C^k_b(\bR^m)$\index{$C^k_b(\bR^m)$} the set of $k$-times differentiable real valued maps defined on $\bR^m$ with bounded partial derivatives up to order $k$, and $C^\infty_b(\bR^m)=\cap_{k\geq 1} C_b^k(\bR^m)$; $C^0_b$ its subspace of continuous bounded functions;

\item Let $L^p([0,1])$ denote the space of functions $f:[0,1]\to \bR$ satisfying $\|f\|_p= \big(\int_0^1 |f(r)|^pdr\big)^{1/p}<\infty$. Let $H$ be the usual Cameron-Martin Hilbert space  
\begin{align*}
H= \Big\{ h(t)=\int_0^t \dot{h}(s)ds,\ t\in[0,T] ;\ h(0)=0,\ \dot{h}\in L^2([0,T])\Big\}.
\end{align*}
 
\item 

Let $L^{p}(\cF_t;\bR^d;\bQ)$\index{$L^p$}, $t\in [0,T]$, is the space of $\bR^d$-valued $\cF_t$-measurable random variables $X$ with norm  $\|X\|_{L^p} = \bE^\bQ[\, |X|^p]^{1/p} < \infty$; $L^\infty$ refers to the subset of  bounded random variables with norm $\lVert X \rVert_{L^\infty}=\esssup_{\omega\in\Omega}|X(\omega)|$; Let $L^{0}(\cF_t;\bR^d)$ be the space of $\bR^d$-valued $\cF_t$ measurable, adapted random variables with the topology of convergence in probability.

\item 
$\cS^{p}([0,T],\bR^m, \bQ)$ is the space of $\bR^d$-valued measurable $\cF$-adapted processes $(Y_t)_{t\in[0,T]}$ satisfying $\|Y \|_{\cS^p} = \bE^\bQ[\|Y\|_\infty^p]^{1/p}= \bE^\bQ[\sup_{t\in[0,T]}|Y(t)|^p]^{1/p}
<\infty$; $\cS^\infty([0,T],\bR^m, \bQ)$ refers to the intersection of $\cS^{p}([0,T],\bR^m, \bQ)$ for every $p\geq 1$. 

\item $\bD^{k,p}(\bR^d)$ and $\bL_{k,p}(\bR^d)$ the spaces of Malliavin differentiable random variables and processes, see relevant section below. Similarly, let $\bD^{k,p}(\cS^p)$ denote the space of Malliavin differentiable, $\cS^p$ valued random variables. 
\end{itemize}

\subsection{Malliavin Calculus} 

Let $\cH$ be a Hilbert space and $W:\cH \to L^2(\Omega)$ a Gaussian random variable. The space $W(\cH)$ endowed with an inner product $\langle W(h_1), W(h_2)\rangle = \bE[W(h_1) W(h_2)]$ is a Gaussian Hilbert space. Let $C_p^\infty(\bR^n; \bR)$ be the space of all infinitely differentiable function which has all partial derivatives with polynomial growth. Let $\bS$ be the collection of random variables $F:\Omega \to \bR$ such that for $n\in\bN$, $f\in C_p^\infty(\bR^n; \bR)$ and $h_i \in \cH$ can be written as $F = f(W(h_1), ..., W(h_n))$. Then we define the derivative of $F$ to be the $\cH$ valued random variable
\begin{align*}
DF = \sum_{i=1}^n \partial_{x_i} f\big( W(h_1), ..., W(h_n) \big) h_i 
= \big\langle (\nabla_x f)\big( W(h_1), ..., W(h_n) \big), h \big\rangle_{\bR^n}.
\end{align*}
In the case of a stochastic integrals, $\cH=L^2([0,T])$ and the Malliavin derivative takes the stochastic integral of a deterministic and square integrable function. 

The Malliavin derivative from $L^p(\Omega)$ into $L^p(\Omega, \cH)$ is closable and the domain of the operator is defined to be $\bD^{1, p}$. $\bD^{1, p}$ is the closure of the of the set $\bS$ with respect to the norm
\begin{align*}
\|F\|_{1, p} = \Big[ \bE[ |F|^p] + \bE[ \|DF\|_{\cH}^p] \Big]^{\tfrac{1}{p}}. 
\end{align*}

We also define the Directional Malliavin Derivative $D^hF = \langle DF, h\rangle$ for any choice of $h\in\cH$.  For more details, see \cite{nualart2006malliavin}. 

\subsubsection*{The Probability Space}

Throughout, we study the case where our filtered probability space $(\Omega, \cF, (\cF_t)_{t\in [0,T]}, \bP)$ is complete, right-continuous and contains an $m$-dimensional Brownian motion and additionally the probability subspace $(\Omega, \cF_0, \bP)$ contains a collection of random variables that are independent of the Brownian motion. Therefore, by conditioning against the $\sigma$-algebra $\cF_0$, the filtered probability space $(\Omega, \cF, (\cF_t)_{t\in [0,T]}, \bP_{\cF_0})$ can be equated with a canonical Wiener space. 

When we write $\omega\in\Omega$, this should be thought of as an element of the canonical Wiener space. In Section \ref{sec:monotoneSDE}, we perturb the SDE only on the canonical Wiener space and as the contents of $(\Omega, \cF_0, \bP)$ is orthogonal, it will be unaffected. In Section \ref{sec:Parametric differentiability}, we perturb on the space $L^p(\cF_0; \bR^d; \bP)$. 

\subsection{Existence and Uniqueness of SDE with Local Lipschitz coefficients}

We present the class of SDEs that we will be working with. 

\subsubsection*{Lipschitz and Local Lipschitz coefficients}

Let $(t,\omega,\theta )\in[0,T] \times \Omega \times L^0(\Omega, \cF_0, \bP; \bR^d)$. In this paper, we prove differentiability properties of the SDE 
\begin{equation}
\label{eq:SDE}
X_\theta(t)(\omega) = \theta + \int_0^t b\Big(s,\omega, X(s)(\omega)\Big) ds + \int_0^t \sigma\Big(s, \omega, X(s)(\omega)\Big) dW(s),
\end{equation}
driven by a $m$-dimensional Brownian motion $W$. 

\begin{assumption}
\label{Assumption:1}
Let $p\geq 2$. Let $\theta:\Omega\to\bR^d$, $b:[0,T]\times \Omega \times \bR^d \to \bR^d$ and $\sigma:[0,T]\times \Omega \times \bR^d \to \bR^{d\times m}$ be progressively measurable maps and $L>0$ such that:
\begin{itemize}
\item $\theta \in L^p(\cF_0; \bR^d; \bP)$ is independent of the Brownian motion $W$. 
\item $b$ and $\sigma$ are integrable in the sense that 
\begin{align}
\label{eq:Integrabilitybandsigma} 
\bE\Big[ \Big(\int_0^T |b(t, \omega, 0)| dt\Big)^{p}\Big],
\bE\Big[ \Big( \int_0^T | \sigma(t, \omega, 0) |^2 dt\Big)^{\tfrac{p}{2}}\Big]< \infty.
\end{align}
\item 
$\exists L$ such that for almost all $(s, \omega)\in [0,T]\times \Omega$ and $\forall x, y\in\bR^d$ we have
\begin{align*}
\big\langle x-y, b(s, \omega, x) - b(s, \omega, x)\big\rangle_{\bR^d} \leq L |x-y|^2
\quad\textrm{and}\quad
| \sigma(s, \omega, x) - \sigma(s, \omega, y) | \leq L |x-y|.
\end{align*}
\item For $x, y\in\bR^d$ such that $|x|, |y|<N$ and for almost all $(s, \omega)\in [0,T]\times \Omega$, $\exists L_N>0$ such that
\begin{align*}
| b(s, \omega, x) - b(s, \omega, y) | \leq L_N |x-y|.
\end{align*}
\end{itemize}
\end{assumption}
The next result extends other results found in the literature to the case of random coefficients. Existence and uniqueness of a solution follow the methods of \cite{mao2008stochastic}*{Theorem 2.3.6}; the case of random coefficients is not addressed there but the general methodology is applicable in the same way with only more care being taken when proving integrability. 
\begin{theorem}
\label{theorem:ExistenceUniquenessMao}
Let $p\geq 2$. Suppose Assumption \ref{Assumption:1} is satisfied. Then there exists a unique solution $(X(t))_{t\in[0,T]}$ to the SDE \eqref{eq:SDE} in $\cS^p$ and 
\begin{align*}
\bE\Big[ \| X\|_{\infty}^p\Big] \lesssim \Big( \bE\Big[ |\theta|^p\Big] 
+ \bE\Big[ \Big( \int_0^T |b(s, \omega, 0)| ds \Big)^p\Big] 
+ \bE\Big[ \Big( \int_0^T \Big| \sigma(s, \omega, 0)\Big|^2 ds \Big)^{\tfrac{p}{2}} \Big] \Big).
\end{align*}
Moreover, the map $t\mapsto X(t)(\omega)$ is $\bP$-a.s. continuous.

Finally, the solution of the SDE is Stochastically Stable in the sense that for $\forall \xi, \theta \in L^p(\cF_0; \bR^d; \bP)$, 
\begin{align*}
\bE\Big[ \| X_\xi - X_\theta \|_\infty^p \Big] \lesssim \bE\Big[ | \theta - \xi|^p\Big]. 
\end{align*}
\end{theorem}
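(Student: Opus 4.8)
The plan is to follow the classical localization scheme, adapted to random coefficients. First I would truncate the drift: for each $N\in\bN$ introduce a globally Lipschitz modification $b_N$ of $b$ that coincides with $b$ on the ball $\{|x|\le N\}$, exploiting the local Lipschitz constant $L_N$ (for instance $b_N(s,\omega,x)=b(s,\omega,\rho_N(x))$ for a $1$-Lipschitz retraction $\rho_N$ onto the closed ball of radius $N$). Since $(b_N,\sigma)$ are globally Lipschitz and inherit the integrability \eqref{eq:Integrabilitybandsigma}, the standard contraction/fixed-point theory yields a unique $X^N\in\cS^p$ solving \eqref{eq:SDE} with drift $b_N$. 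Setting $\tau_N=\inf\{t\in[0,T]:|X^N(t)|\ge N\}$ (with $\inf\emptyset=T$), the consistency $b_N=b_M$ on $\{|x|\le N\wedge M\}$ together with pathwise uniqueness gives $X^N=X^M$ on $[0,\tau_N\wedge\tau_M]$, so the $X^N$ patch together into a process $X$ defined up to $\tau_\infty:=\lim_N\tau_N$. The whole result then hinges on showing $\tau_\infty=T$ a.s., i.e.\ non-explosion.

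Second, I would establish the a priori moment bound, which simultaneously rules out explosion. Applying It\^o's formula to $(1+|X^N(t\wedge\tau_N)|^2)^{p/2}$ (working with $1+|x|^2$ avoids the singularity at the origin), the drift term is controlled by the monotonicity condition evaluated at $y=0$, which yields $\langle x,b(s,\omega,x)\rangle\le L|x|^2+|x|\,|b(s,\omega,0)|$, while the Lipschitz bound on $\sigma$ gives $|\sigma(s,\omega,x)|^2\le 2|\sigma(s,\omega,0)|^2+2L^2|x|^2$. After taking the supremum in time and applying the Burkholder--Davis--Gundy inequality to the stochastic integral, Young's inequality to absorb the martingale contribution, and the integrability \eqref{eq:Integrabilitybandsigma}, a Gr\"onwall argument produces a bound on $\bE[\sup_{t\le\tau_N}|X^N(t)|^p]$ that is uniform in $N$. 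A Chebyshev-type estimate $N^p\,\bP(\tau_N<T)\le\bE[\sup_{t\le\tau_N}|X^N(t)|^p]\le C$ then forces $\bP(\tau_N<T)\le C N^{-p}\to0$, so $\tau_\infty=T$ a.s.; passing to the limit gives $X\in\cS^p$ with the stated moment estimate, and $\bP$-a.s.\ continuity of $t\mapsto X(t)(\omega)$ is inherited from each $X^N$ on $[0,\tau_N]$.

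Finally, for uniqueness and stochastic stability I would work directly with the difference of two solutions $X_\theta,X_\xi$ started from $\theta,\xi\in L^p(\cF_0;\bR^d;\bP)$. It\^o's formula applied to $|X_\theta-X_\xi|^p$ uses the monotonicity condition in its full strength, $\langle X_\theta-X_\xi,\,b(s,\omega,X_\theta)-b(s,\omega,X_\xi)\rangle\le L|X_\theta-X_\xi|^2$, so that the locally-Lipschitz-but-unbounded part of the drift never appears; the diffusion difference is controlled by the global Lipschitz constant of $\sigma$. A further Burkholder--Davis--Gundy and Gr\"onwall estimate then delivers $\bE[\|X_\theta-X_\xi\|_\infty^p]\lesssim\bE[|\theta-\xi|^p]$, and the special case $\theta=\xi$ recovers pathwise uniqueness.

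I expect the main obstacle to be the uniform-in-$N$ a priori estimate under merely random, $L^p$-integrable coefficients: because $b(\cdot,\cdot,0)$ and $\sigma(\cdot,\cdot,0)$ are random and integrable only to order $p$, the Gr\"onwall and Burkholder--Davis--Gundy steps must be carried out in expectation rather than pathwise, and one must ensure that every constant arising from Young's inequality and from the stopping-time localization is independent of $N$, so that the non-explosion conclusion $\tau_\infty=T$ is genuine rather than circular.
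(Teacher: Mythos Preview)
Your proposal is correct and follows essentially the same approach as the paper: existence and uniqueness are obtained by the localization scheme of \cite{mao2008stochastic}*{Theorem 2.3.6} (which the paper simply cites, noting only that random coefficients require more care with integrability), and the stochastic stability estimate is proved by applying It\^o's formula to $|X_\theta-X_\xi|^p$, using monotonicity for the drift, the Lipschitz bound for $\sigma$, Burkholder--Davis--Gundy for the martingale term, and Gr\"onwall --- exactly as you describe. The only cosmetic difference is that you work with $(1+|x|^2)^{p/2}$ for the a priori bound while the paper uses $|x|^p$ for the stability part; both are standard.
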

\begin{proof}
This proof can be found in Appendix \ref{Appendix:MomentCalculations}.
\end{proof}

\begin{remark}[Issues with integrability and Fubini - Sharp conditions]
\label{rem:HsarpCondsFubini}
The integrability conditions of Assumption \ref{Assumption:1} are designed to be sharp. However, they yield processes which can have some problematic properties. 

It is very important to note that we cannot (in general) swap the order of integration at this point! This is a key point in our manuscript. We are not  able to assume that the drift term is sufficiently integrable (given \eqref{eq:Integrabilitybandsigma}) and hence the error terms appearing in the proofs of differentiability below will not be assumed to be integrable. 

To emphasize our point consider the following monotone drift function $b(t, \omega, x) = x - x^{5} $ and $\sigma(t, \omega, x)$ is chosen so that for some $t'\in[0,T)$
\begin{align*}
\bE\Big[ \big| \int_0^T|\sigma(t, \omega, 0)|^2dt\big|^2\Big]< \infty, \quad \bE\Big[ \big| \int_0^{t'}|\sigma(t, \omega, 0)|^2dt\big|^{\tfrac{5}{2} }\Big] = \infty.
\end{align*}
These satisfy the conditions of Assumption \ref{Assumption:1} for $p=4$ but not for $p=5$. We can then argue as follows: for $t\in[t',T]$ 
\begin{align*}
\bE[\, |X(t)|^4] <\infty, \quad \bE[\, |X(t)|^5] = \infty \quad \mbox{and in particular} \quad \bE\Big[ \int_{t'}^t |X(s)|^5 ds\Big] = \infty. 
\end{align*}
The existence of finite fourth moments ensures we have finite first moments and hence for $t>t'$
\begin{align*}
\bE\Big[ \int_0^t\Big( X(s) - X(s)^5 \Big)ds \Big]<\infty
\quad \textrm{which implies that}\quad
\bE\Big[ \int_{t'}^t X(s)^5 ds \Big] <\infty.
\end{align*}
\end{remark}

\subsubsection*{On SDEs with Linear Coefficients}

Let $(t,\omega,\theta )\in[0,T] \times \Omega \times L^0(\Omega, \cF_0, \bP; \bR^d)$. We will also be interested in SDEs of the form
\begin{equation}
\label{eq:SDEDeriv}
X_\theta(t)(\omega) = \theta + \int_0^t \Big[ B\big(s,\omega \big) X_\theta(s)(\omega) + b(s, \omega) \Big] ds + \int_0^t \Big[ \Sigma\big(s, \omega \big) X(s)(\omega) + \sigma(s, \omega) \Big]dW(s),
\end{equation}
driven by a $m$-dimensional Brownian motion $W$. The derivatives of SDEs of the form \eqref{eq:SDE} will satisfy linear SDEs  of the form \eqref{eq:SDEDeriv}. 

\begin{assumption}
\label{Assumption:linear}
Let $p\geq 1$. Let $B:[0,T]\times \Omega \to \bR^{d\times d}$, $\Sigma:[0,T]\times \Omega \to \bR^{(d\times m)\times d}$, $b:[0,T] \times \Omega \to \bR^d$ and $\sigma:[0,T] \times \Omega \to \bR^{d\times m}$ be progressively measurable maps such that:
\begin{itemize}
\item $\theta \in L^p(\cF_0; \bR^d; \bP)$ is independent of the Brownian motion $W$. 
\item $B$, $b$, $\Sigma$ and $\sigma$ are integrable in the sense that $\exists L\geq 0$ such that $\forall x \in \bR^d$
\begin{align*}
x^T B(t, \omega) x< L|x|^2\quad \bP\mbox{-a.s.}, \qquad \int_0^T \| \Sigma(t, \cdot) \|_{L^{\infty}}^2 dt < \infty, 
\\
\bE\Big[ \Big( \int_0^T |b(t, \omega)|dt \Big)^p \Big] ,\quad \bE\Big[ \Big( \int_0^T | \sigma(t, \omega)|^2 dt\Big)^{\tfrac{p}{2}} \Big] < \infty. 
\end{align*}
\end{itemize}
\end{assumption}

One advantage of SDEs of the form \eqref{eq:SDEDeriv} is that they have an explicit solution unlike SDEs of the form \eqref{eq:SDE} where a solution exists but cannot be explicitly stated. Linear SDEs do have Lipschitz coefficients, but their Lipschitz constants are not uniform over $(t, \omega)\in [0,T]\times \Omega$. Therefore, we cannot apply Theorem \ref{theorem:ExistenceUniquenessMao}. 

Notice that for Assumption \ref{Assumption:linear}, we do not make any requirement on $B$ being positive definite operator. In fact, we may be interested in cases where $\exists x\in\bR^d$ such that $x^T (\int_0^T B(t, \omega)dt) x=-\infty$ with positive probability. 

\begin{theorem}
\label{theorem:ExistenceUniquenessMaoLinearSDE}
Let $p\geq 1$. Suppose Assumption \ref{Assumption:linear} is satisfied. Then there exists a unique solution $(X(t))_{t\in[0,T]}$ to the SDE \eqref{eq:SDEDeriv} in $\cS^p$ with explicit form
\begin{equation*}
X_\theta(t) = \Psi(t) \Bigg( \theta + \int_0^t \Psi(s)^{-1}\Big[ b(s, \omega) - \Big\langle \Sigma(s, \omega), \sigma(s, \omega) \Big\rangle_{\bR^m}\Big] ds + \int_0^t \Psi(s)^{-1} \sigma(s, \omega) dW(s) \Bigg),
\end{equation*}
where $\Psi:[0,T]\times \Omega \to \bR^{d\times d}$ can be written as
\begin{equation}
\label{eq:LinearSDEFundamental}
\Psi(t) = I_d \exp\Bigg( \int_0^t \Big[ B(s, \omega) - \frac{\big\langle \Sigma(s, \omega), \Sigma(s, \omega)\big\rangle_{\bR^m}}{2} \Big] ds + \int_0^t \Sigma(s, \omega) dW(s) \Bigg),
\end{equation}
and
\begin{align*}
\bE\Big[ \| X_\theta\|_\infty \Big] \lesssim \Big( \bE\Big[ |\theta|^p\Big] + \bE\Big[ \Big( \int_0^T |b(s, \omega)| ds\Big)^p\Big] + \bE\Big[ \Big( \int_0^T |\sigma(s, \omega)|^2 ds \Big)^{\tfrac{p}{2}} \Big] \Big).
\end{align*}
Moreover, the map $t\mapsto X(t)(\omega)$ is $\bP$-a.s. continuous.

Finally, the solution of the equation is Stochastically stable in the sense that $\forall \xi, \theta \in L^p(\cF_0; \bR^d; \bP)$
\begin{align*}
\bE\Big[ \| X_\xi - X_\theta\|_{\infty}^p\Big] \lesssim \bE\Big[ | \xi - \theta|^p \Big].
\end{align*}
\end{theorem}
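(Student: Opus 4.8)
The equation \eqref{eq:SDEDeriv} is linear, so the natural route is variation of constants built on the fundamental matrix $\Psi$ of \eqref{eq:LinearSDEFundamental}. First I would check, via It\^o's formula, that $\Psi$ solves the homogeneous matrix SDE $\dd \Psi(t) = B(t,\omega)\Psi(t)\,\dd t + \Sigma(t,\omega)\Psi(t)\,\dd W_t$ with $\Psi(0)=I_d$ and that it is invertible, with $\Psi^{-1}$ solving the corresponding adjoint equation; the It\^o correction $-\tfrac{1}{2}\langle\Sigma,\Sigma\rangle_{\bR^m}$ in the exponent is exactly what makes this identity hold. A further application of the It\^o product rule to the right-hand side of the claimed representation then confirms that $X_\theta$ solves \eqref{eq:SDEDeriv}, giving existence of a solution of the stated explicit form.

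The core of the argument is the $\cS^p$ bound, where the one-sided hypotheses force some care. One cannot estimate the moments of $\Psi(t)$ and of $\Psi(s)^{-1}$ separately: since $B$ is bounded only from above, the matrix $-B$ governing $\Psi^{-1}$ is bounded only from below, and, as noted before the statement, $x^T\big(\int_0^T B\,\dd t\big)x=-\infty$ can occur with positive probability, so $\Psi^{-1}$ need carry no useful integrability. Instead I would estimate $X_\theta$ directly. Applying It\^o's formula to $(\varepsilon+|X_\theta(t)|^2)^{p/2}$ and letting $\varepsilon\downarrow 0$, the drift picks up $\langle X_\theta, B X_\theta\rangle \le L|X_\theta|^2$ from the one-sided bound, a term bounded by Young's inequality through $\langle X_\theta, b\rangle$, and the quadratic-variation contribution $|\Sigma X_\theta + \sigma|^2 \le 2|\Sigma|^2|X_\theta|^2 + 2|\sigma|^2$; crucially $\int_0^T\|\Sigma(s,\cdot)\|_{L^\infty}^2\,\dd s<\infty$, so the factor multiplying $|X_\theta|^2$ is an a.s.\ time-integrable (albeit random) process.

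Taking the supremum over $t\le T$, controlling the local-martingale part by the Burkholder--Davis--Gundy inequality, and absorbing its constant into the diffusive term, I would close the estimate with a stochastic Gr\"onwall inequality whose integrable random coefficient is $|\Sigma(s,\cdot)|^2$, obtaining
\begin{align*}
\bE\Big[\|X_\theta\|_\infty^p\Big] \lesssim \bE\big[|\theta|^p\big] + \bE\Big[\Big(\int_0^T |b(s,\omega)|\,\dd s\Big)^p\Big] + \bE\Big[\Big(\int_0^T |\sigma(s,\omega)|^2\,\dd s\Big)^{p/2}\Big],
\end{align*}
which is finite by Assumption \ref{Assumption:linear}. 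Uniqueness and stability then follow for free: the difference of two solutions, and likewise $X_\xi - X_\theta$, solves the homogeneous equation ($b=\sigma=0$) started from $0$, respectively $\xi-\theta$, so the same estimate yields $X^1=X^2$ in $\cS^p$ and $\bE[\|X_\xi-X_\theta\|_\infty^p]\lesssim\bE[|\xi-\theta|^p]$. The $\bP$-a.s.\ continuity of $t\mapsto X_\theta(t)$ is inherited from the continuity of $\Psi$ and of the Lebesgue and It\^o integrals in the explicit representation.

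The main obstacle is precisely this moment bound: the proof must keep the forward propagator $\Psi(t)\Psi(s)^{-1}$ intact rather than split it into individually integrable factors, using only the upper bound $x^TBx\le L|x|^2$ and the time-integrability of $\|\Sigma\|_{L^\infty}^2$, and the stochastic Gr\"onwall step is what lets the argument close with a random, merely time-integrable coefficient.
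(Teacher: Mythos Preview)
Your proposal is correct and follows essentially the same route as the paper: It\^o's formula applied to $|X_\theta|^p$, the one-sided bound $\langle X_\theta, BX_\theta\rangle\le L|X_\theta|^2$, Young's inequality on the inhomogeneous terms, BDG on the martingale part, and then Gr\"onwall. One small simplification relative to what you wrote: because Assumption~\ref{Assumption:linear} gives $\int_0^T\|\Sigma(s,\cdot)\|_{L^\infty}^2\,\dd s<\infty$, the coefficient multiplying $|X_\theta|^p$ in the Gr\"onwall step can be taken \emph{deterministic} (namely $1+\|\Sigma(s,\cdot)\|_{L^\infty}^2$), so after taking expectations one closes with the ordinary Gr\"onwall inequality on $t\mapsto \bE[\|X_\theta\|_{\infty,t}^p]$ rather than a genuinely stochastic version; this is exactly what the paper does.
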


\begin{proof}
An existence and uniqueness proof is found in \cite{mao2008stochastic}*{Theorem 3.3.1}. Moment calculations are proved in Appendix 
\ref{Appendix:MomentCalculations}. Stochastic stability is proved in the same fashion as in Theorem \ref{theorem:ExistenceUniquenessMao}. 
\end{proof}

\subsection{A Gr\"onwall inequality}

To the best of our knowledge the next result is new and of independent interest. While unsurprising, this is key to the methods of this paper.

\begin{proposition}[Gr\"onwall Inequality for the Topology of Convergence in Probability]
\label{Proposition:GronwallConProb}
Let $n\in\bN$, $A_n:[0,T] \times \Omega \to \bR$ be a sequence of adapted stochastic processes such that $\|A_n\|_{\infty} \xrightarrow{\bP} 0$ as $n\to \infty$. 

Let $U_n$ be the solution of the SDE 
\begin{align*}
U_n(t) = A_n(t) + \int_0^t f(U_n(s)) ds + \int_0^t g(U_n(s)) dW(s), \qquad t\in[0,T]
\end{align*}
where $f,g:\bR \to \bR$ are Monotone growth and Lipschitz respectively (see 3rd bullet point of Assumption \ref{Assumption:1}) and $f(0)=g(0)=0$. 

Then $\|U_n\|_{\infty} \xrightarrow{\bP} 0$ as $n\to \infty$. 
\end{proposition}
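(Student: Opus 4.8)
The plan is to reduce the problem to a genuine It\^o process and then run a Gr\"onwall argument that is localised so that expectations are taken only after stopping. Set $Y_n := U_n - A_n$; since $A_n$ enters the equation additively rather than as an integral, $Y_n$ is a continuous It\^o process solving
\begin{align*}
Y_n(t) = \int_0^t f\big(Y_n(s)+A_n(s)\big)\,\ud s + \int_0^t g\big(Y_n(s)+A_n(s)\big)\,\ud W(s),
\end{align*}
and, because $\|U_n\|_\infty \leq \|Y_n\|_\infty + \|A_n\|_\infty$ and $\|A_n\|_\infty \xrightarrow{\bP}0$ by hypothesis, it suffices to prove $\|Y_n\|_\infty \xrightarrow{\bP} 0$. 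Applying It\^o's formula to $|Y_n|^2$ and splitting $f(Y_n+A_n) = [f(Y_n+A_n)-f(A_n)] + f(A_n)$, the monotonicity of $f$ from the third bullet of Assumption \ref{Assumption:1} (with $x=Y_n+A_n$, $y=A_n$) gives $\langle Y_n,\, f(Y_n+A_n)-f(A_n)\rangle \leq L|Y_n|^2$ globally, while the Lipschitz property together with $g(0)=0$ gives $|g(Y_n+A_n)|^2 \lesssim |Y_n|^2 + |A_n|^2$. The only term not absorbed into $|Y_n|^2$ and $|A_n|^2$ is $\langle Y_n, f(A_n)\rangle$: this is where the super-linear growth of $f$ is felt, but $f$ is evaluated at the small quantity $A_n$.

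The main obstacle is precisely that neither $Y_n$ nor $f(A_n)$ need be integrable, so one cannot take expectations in the It\^o formula directly — this is the difficulty flagged in Remark \ref{rem:HsarpCondsFubini}. I negotiate it with a double localisation. For $\eta, R>0$ set
\begin{align*}
\tau^A_n := \inf\{t\geq 0 : |A_n(t)|>\eta\}, \qquad \rho^R_n := \inf\{t\geq 0 : |Y_n(t)|\geq R\}, \qquad \tau_n := \tau^A_n \wedge \rho^R_n \wedge T.
\end{align*}
On $[0,\tau_n]$ one has $|A_n|\leq \eta$ and $|Y_n|\leq R$, so all integrands are bounded, the stopped stochastic integral is a true martingale, and $\bE[\sup_{t\leq T}|Y_n(t\wedge\tau_n)|^2]\leq R^2<\infty$. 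Using $f(0)=0$ and the local Lipschitz constant $L_\eta$ of $f$ on $\{|x|\leq\eta\}$, the forcing contribution is bounded by $\int_0^{t\wedge\tau_n}\big(|f(A_n)|^2+|A_n|^2\big)\,\ud s \lesssim T(L_\eta^2+1)\eta^2$. A Burkholder--Davis--Gundy estimate to absorb the martingale part into the left-hand side (legitimate exactly because the stopped supremum is a priori in $L^2$, which is what $\rho^R_n$ buys us) followed by the classical Gr\"onwall inequality then yields, uniformly in $n$ and in $R$,
\begin{align*}
\bE\Big[\sup_{0\leq t\leq T}|Y_n(t\wedge\tau_n)|^2\Big] \leq \gamma(\eta), \qquad \gamma(\eta):= C(L,T)\,(L_\eta^2+1)\,\eta^2,
\end{align*}
where $\gamma(\eta)\to 0$ as $\eta\to 0$ since $L_\eta$ stays bounded as $\eta\downarrow 0$; note that $R$ has disappeared from the bound, having served only to justify the absorption.

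It remains to upgrade this stopped $L^2$ bound to convergence in probability of the unstopped supremum. Fix $\varepsilon>0$. Since $Y_n(\cdot\wedge\tau_n)\equiv Y_n$ on $\{\tau_n=T\}$, one has $\{\|Y_n\|_\infty>\varepsilon\}\subseteq \{\sup_{t\leq T}|Y_n(t\wedge\tau_n)|>\varepsilon\}\cup\{\tau_n<T\}$; moreover $\{\tau_n<T\}\subseteq\{\rho^R_n\leq\tau^A_n,\ \rho^R_n<T\}\cup\{\tau^A_n<T\}$, where on the first event continuity of $Y_n$ forces $\sup_t|Y_n(t\wedge\tau_n)|\geq R$, and the second event is contained in $\{\|A_n\|_\infty\geq\eta\}$. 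Chebyshev's inequality applied to the previous display therefore gives
\begin{align*}
\bP\big(\|Y_n\|_\infty>\varepsilon\big) \leq \frac{\gamma(\eta)}{\varepsilon^2} + \frac{\gamma(\eta)}{R^2} + \bP\big(\|A_n\|_\infty\geq \eta\big).
\end{align*}
I then pass to the limit in the order $n\to\infty$ (killing the last term by the hypothesis $\|A_n\|_\infty\xrightarrow{\bP}0$), then $R\to\infty$, and finally $\eta\to 0$ (killing the $\gamma(\eta)$ terms), which gives $\limsup_n\bP(\|Y_n\|_\infty>\varepsilon)=0$ and hence the claim. The crux of the argument, and what renders the super-linear drift tractable, is that monotonicity removes any need for pointwise control of $f$ at large arguments: only $f(A_n)$ survives, the $\eta$-localisation is what tames it, and the $R$-localisation merely legitimises the expectations before being sent to infinity.
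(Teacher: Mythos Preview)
Your proof is correct and follows the same core strategy as the paper: localise $A_n$ via a stopping time so that it is bounded by $\eta$, derive an $L^2$ moment bound on the stopped process depending only on $\eta$, convert this to a probability estimate via Chebyshev, and then let $n\to\infty$ followed by $\eta\to 0$.

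The differences are largely of presentation. The paper works directly with $U_n$, introduces only the single stopping time $\tau^A_n$, and obtains the moment bound $\bE[\|U_n(\cdot\wedge\tau^A_n)\|_\infty^2]\leq \eta^2 e^C$ by appealing to Theorem~\ref{theorem:ExistenceUniquenessMao}. Strictly speaking that theorem is stated for SDEs with a fixed initial condition rather than a time-varying forcing $A_n(t)$, so the appeal is a shorthand for a computation the paper does not spell out. Your version makes this step explicit: by subtracting $A_n$ you isolate a genuine It\^o process $Y_n$, run the It\^o formula directly, and introduce the additional localisation $\rho^R_n$ precisely so that the BDG absorption is justified by an a~priori $L^2$ bound. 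The $R$-localisation is then disposed of at the end, since the resulting $\gamma(\eta)$ does not depend on $R$; this also handles what the paper leaves implicit in its one-line appeal. Your use of the local Lipschitz constant $L_\eta$ to control $|f(A_n)|$ on the stopped interval matches exactly what Theorem~\ref{theorem:ExistenceUniquenessMao} would be doing under the hood. In short: same idea, your write-up is more self-contained.
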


Notice that since we do not have finite second moments of $\|A_n\|_{\infty}$, we cannot prove this using a mean square type argument. 

\begin{proof}
Fix $\delta>0$ and let $n\in\bN$. We have that
\begin{align*}
\bP\Big[ \|U_n\|_\infty> \delta\Big] &\leq \bP\Big[ \|U_n\|_\infty> \delta, \|A_n\|_\infty \leq \eta\Big] + \bP\Big[ \|A_n\|_\infty>\eta\Big],
\end{align*}
for any choice of $\eta>0$. We already have that $\lim_{n\to \infty} \bP\big[ \|A_n\|_\infty >\eta\big] = 0$ for any choice of $\eta>0$ by assumption. Define the sequence of stopping times $ \tau_n=\inf \{ t'>0: |A_n(t')| > \eta\}$, $n\in\bN$.

Firstly, we show that $\lim_{n\to\infty} \tau_n\geq T$ almost surely. Suppose this was not the case. Then $\exists \Omega' \subset \Omega$ with $\bP(\Omega')>0$ and $\forall \omega\in\Omega'$ $\exists n_k(\omega)$ an increasing subsequence of integers such that $\tau_{n_k}(\omega)<T$ for all $k\in \bN$.  Then $\forall \omega\in\Omega'$, $\|A_{n_k}\|_{\infty}(\omega)>\eta$ for all $k\in\bN$. But that implies that for any $k\in \bN$ we have
\begin{align*}
\Omega' \subset \{ \omega \in\Omega; \|A_{n_k}\|_\infty(\omega) >\eta\}
\quad\textrm{and hence that}\quad
\bP\big[ \|A_{n_k}\|_\infty>\eta\big] > \bP[\Omega'].
\end{align*}
The latter contradicts the assumption that $\|A_{n_k}\|_{\infty}$ converges to $0$ in probability. So any such set $\Omega'$ must have measure 0 and we conclude $\lim_{n\to \infty} \tau_n>T$ almost surely. 

The SDE for $U_n(t)$ is well defined for $t\in[0,\tau_n]$. Outside of this interval, $A_n$ may not be integrable so we may not be able construct a solution. However $\forall \omega\in \Omega$ such that $\|A_n\|_\infty (\omega) \leq \eta$ we have that $\tau_n(\omega)>T$. Therefore
\begin{align*}
\bP\Big[ \|U_n(\cdot)\|_\infty >\delta, \|A_n\|_\infty \leq \eta\Big] = \bP\Big[ \|U_n(\cdot\wedge \tau_n)\|_\infty >\delta, \|A_n\|_\infty \leq \eta\Big]
\end{align*}
because the process $U_n(\cdot)$ and the stopped process $U_n(\cdot\wedge \tau_n)$ are $\bP$-almost surely equal when one restricts to the event where $\| A_n\|_\infty\leq \eta$. 

As we know that the solution $U_n(t\wedge \tau_n)$ will exist and make sense, it serves to introduce this stopping time. Thus we get
\begin{align*}
\bP\Big[ \|U_n\|_\infty> \delta\Big] &\leq \bP\Big[ \|U_n(\cdot\wedge \tau_n)\|_\infty >\delta, \|A_n\|_\infty \leq \eta\Big]
+ \bP\Big[ \|A_n\|_\infty>\eta\Big]
\\
&\leq\bP\Big[ \|U_n(\cdot\wedge \tau_n)\|_\infty >\delta\Big]
+ \bP\Big[ \|A_n\|_\infty>\eta\Big].
\end{align*}
Now we consider the SDE for $U_n(t\wedge \tau_n)$. The stopping time prevents the term $A_n(t\wedge \tau_n)$ from getting any larger that $\eta$ and ensures that the stochastic integral is a local martingale. 

Hence we apply Theorem \ref{theorem:ExistenceUniquenessMao} to obtain existence/uniqueness of the solution and moment bounds.

\begin{align*}
\bE\Big[ \| U_n(\cdot \wedge \tau_n)\|_{\infty}^2\Big] < \eta^2 e^C
\quad \textrm{and therefore}\quad
\bP\Big[ \|U_n\|_\infty> \delta\Big] \leq \frac{\eta^2 e^C}{\delta^2} + \bP\Big[ \|A_n\|_\infty>\eta\Big].
\end{align*}
Choose $\eta$ such that ${\eta^2 e^C}/{\delta^2}<{\varepsilon'}/{2}$. Then find $N\in \bN$ such that $\forall n\geq N$ $\bP\big[ \|A_n\|_\infty>\eta\big]< {\varepsilon'}/{2}$. This concludes the proof. 
\end{proof}


\section{Malliavin Differentiability of SDEs with monotone coefficients}
\label{sec:monotoneSDE}

In this section we prove two Malliavin differentiability result for SDEs in the class given by Assumption \ref{Assumption:1}. We use a less known method using the concepts of \emph{Ray absolute continuity} and \emph{Stochastic G\^ateaux Differentiability}  initiated by \cites{sugita1985} and later developed by \cites{MastroliaPossamaiReveillac2017,imkeller2016note}.

For SDEs of the form \eqref{eq:SDE}, the proof of existence and uniqueness of a solution involves a sequence of random variables which converge almost surely to the solution rather than in mean square. Indeed this sequence of random variables does not converge in mean square, unlike in the proof of Existence and Uniqueness for SDEs with Lipschitz coefficients. This means that the classical method from \cite{nualart2006malliavin}*{Lemma 1.2.3} cannot be applied; recall further our observation on the role that Proposition \ref{Proposition:GronwallConProb} will play here. 

\subsection{Main results and their assumptions}
We state the main assumptions and results with the proofs postponed for later sections.
\begin{assumption}
\label{Assumption:2}
Let $b: [0,T]\times \Omega\times \bR^d \to \bR^d$ and $\sigma: [0,T] \times \Omega\times\bR^d \to \bR^{d\times m}$ satisfy Assumption \ref{Assumption:1} for some $p>2$. Further, suppose
\begin{enumerate}[(i)]
\item For almost all $(t, \omega)\in [0,T]\times \Omega$ the functions $\sigma(t, \omega, \cdot)$ and $b(t, \omega, \cdot)$ have spatial partial derivatives in all directions. 
\item For all $h\in H$ and $(\varepsilon, x)\in \bR^+ \times \bR^d$, we have that the maps $\bR^+ \times \bR^d \to L^0(\Omega)$ 
\begin{align*}
(\varepsilon, x) \mapsto \int_0^T \Big| \nabla_x \sigma(t, \omega+\varepsilon h, x) \Big|^2 dt
\qquad\textrm{and}\qquad 
(\varepsilon, x) \mapsto \int_0^T \Big| \nabla_x b(t, \omega+\varepsilon h, x) \Big|^2 dt,
\end{align*}
are jointly continuous (where convergence in $L^0$ means convergence in probability).

\item $\exists U:[0,T]^2\times \Omega \to \bR^{d\times m}$ and $V:[0,T]^2\times \Omega \to \bR^{(d\times m)\times m}$ which satisfy that for $s>r$ $U(s, r, \omega) = V(s, r, \omega) = 0$ and 
\begin{align*}
\bE\Big[ \Big( \int_0^T \Big( \int_0^T \Big|U(s, r, \omega)\Big|^2ds\Big)^{\tfrac{1}{2}}  dr\Big)^p \Big] <\infty
\quad \textrm{and}\quad
\bE\Big[ \Big( \int_0^T \int_0^T \Big|V(s, r, \omega) \Big|^2ds dr\Big)^{\tfrac{p}{2}} \Big]<\infty.
\end{align*}
\item $b$ and $\sigma$ satisfy, as $\varepsilon \to 0$,  that $\forall h \in H$  
\begin{align*}
\bE\Big[ \Big( \int_0^T \Big| \frac{b(r, \omega + \varepsilon h, X(r)) - b(r, \omega, X(r))}{\varepsilon} - \int_0^r U(s, r, \omega)\dot{h}(s)ds\Big| dr \Big)^2 \Big] \to 0,
\\
\bE\Big[  \int_0^T \Big| \frac{\sigma(r, \omega+ \varepsilon h, X(r)) - \sigma(r, \omega, X(r))}{\varepsilon} - \int_0^r V(s, r, \omega) \dot{h}(s)ds\Big|^2 dr \Big] \to 0.
\end{align*}
\end{enumerate}
\end{assumption}
In the above condition neither $b$ or $\sigma$ are assumed to be in $\bD^{1,2}$, they are only assumed to be Malliavin differentiable over the sub-manifold on which $X$ (solution to \eqref{eq:SDE}) takes values on. After our main results we give examples of SDE illustrating the scope of our assumptions. 

\begin{theorem}[Malliavin Differentiability of Monotone SDEs]
\label{theo-Mall-diff-monotone-SDEs}
Take $p>2$. Let Assumption \ref{Assumption:2} hold and denote by $X$ the unique solution of the SDE \eqref{eq:SDE} in $\cS^p$. 

Then $X$ is Malliavin differentiable, i.e.~$X\in \bD^{1,p}(\cS^p)$ and there exist adapted processes $U$ and $V$ such that the Malliavin derivative satisfies for $0\leq s\leq t \leq T$
\begin{align}
\label{eq:SDEMallDeriv}
D_s X(t)(\omega) =
& 
\sigma(s,\omega, X(s)(\omega)) +  \int_s^t U(s, r, \omega) dr + \int_s^t V(s, r, \omega) dW(r)
\\
\nonumber
&+\int_s^t \nabla_x b(r,\omega, X(r)(\omega)) D_s X(r)(\omega) dr + \int_s^t \nabla_x \sigma(r,\omega, X(r)(\omega)) D_s X(r)(\omega) dW(r),
\end{align}
and otherwise $D_s X(t)=0$ for $s>t$.
\end{theorem}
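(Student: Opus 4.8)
The plan is to bypass the truncation route (and the unavailable uniform bound $\sup_n\bE[\|DX^n\|_H]<\infty$) entirely and instead invoke Sugita's characterisation of $\bD^{1,p}$ through \emph{Ray Absolute Continuity} (RAC) and \emph{Stochastic G\^ateaux Differentiability} (SGD), as set up in the references of Section~\ref{sec:monotoneSDE}. The decisive feature is that SGD only asks for the difference quotient to converge \emph{in probability}, which is exactly the regime in which Proposition~\ref{Proposition:GronwallConProb} operates.

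First I would construct the candidate derivative. For each fixed $s$, read \eqref{eq:SDEMallDeriv} as a \emph{linear} SDE in the variable $t\geq s$ of the form \eqref{eq:SDEDeriv}, with coefficient matrices $B(r,\omega)=\nabla_x b(r,\omega,X(r))$ and $\Sigma(r,\omega)=\nabla_x \sigma(r,\omega,X(r))$, initial datum $\sigma(s,\omega,X(s))$ at $t=s$, and inhomogeneous forcing $U(s,\cdot,\omega)$, $V(s,\cdot,\omega)$. Differentiating the monotonicity bound of Assumption~\ref{Assumption:1} gives $\langle v,\nabla_x b(r,\omega,x)\,v\rangle\leq L|v|^2$ for all $v$, while the Lipschitz bound on $\sigma$ gives $|\nabla_x\sigma|\leq L$; together with Assumption~\ref{Assumption:2}(iii) and the $\cS^p$-moments of $X$ from Theorem~\ref{theorem:ExistenceUniquenessMao}, these are precisely the hypotheses of Assumption~\ref{Assumption:linear}. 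Hence Theorem~\ref{theorem:ExistenceUniquenessMaoLinearSDE} yields a unique solution $\{D_sX(t)\}_{s\leq t}$, and integrating the resulting moment bounds over $s$ (again using Assumption~\ref{Assumption:2}(iii) and \eqref{eq:Integrabilitybandsigma}) produces $\bE[\|DX\|_{\cH}^p]<\infty$, so the candidate lies in $L^p(\Omega;\cH)$.

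Next I would verify SGD. Perturbing on the canonical Wiener space replaces $W$ by $W+\varepsilon h$, so the shifted process $X^{\varepsilon h}:=X(\cdot+\varepsilon h)$ solves \eqref{eq:SDE} with the extra drift $\varepsilon\,\sigma(r,\omega+\varepsilon h,X^{\varepsilon h}(r))\dot h(r)$ and with $b,\sigma$ evaluated at $\omega+\varepsilon h$ ($\theta$ being unaffected, as it is orthogonal to the Wiener part). Setting $Z^{\varepsilon}:=\varepsilon^{-1}(X^{\varepsilon h}-X)$ and splitting each coefficient difference into a spatial part (handled by the fundamental theorem of calculus, giving averaged gradients $\bar B^{\varepsilon}$, $\bar\Sigma^{\varepsilon}$ along the segment joining $X$ and $X^{\varepsilon h}$) plus an $\omega$-part (governed by Assumption~\ref{Assumption:2}(iv)), I find that $W^{\varepsilon}:=Z^{\varepsilon}-D^hX$ solves the linear SDE
\begin{align*}
W^{\varepsilon}(t)=A^{\varepsilon}(t)+\int_0^t \bar B^{\varepsilon}(s)\,W^{\varepsilon}(s)\,ds+\int_0^t \bar\Sigma^{\varepsilon}(s)\,W^{\varepsilon}(s)\,dW(s),
\end{align*}
where $A^{\varepsilon}$ collects the error terms: the coefficient-perturbation errors of Assumption~\ref{Assumption:2}(iv), the shift term $[\sigma(\cdot,\omega+\varepsilon h,X^{\varepsilon h})-\sigma(\cdot,\omega,X)]\dot h$, and the gradient mismatch $[\bar B^{\varepsilon}-\nabla_x b(\cdot,\omega,X)]\,D^hX$ together with its diffusion analogue. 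Since $\nabla_x b$ inherits the one-sided Lipschitz constant $L$ and $\nabla_x\sigma$ the Lipschitz constant $L$ \emph{uniformly in} $\varepsilon$, the homogeneous part satisfies Assumption~\ref{Assumption:linear} uniformly, so the stopping-time localisation underlying Proposition~\ref{Proposition:GronwallConProb} (in its linear, matrix-valued form, using Theorem~\ref{theorem:ExistenceUniquenessMaoLinearSDE} for the moment bound) applies: once $\|A^{\varepsilon}\|_{\infty}\xrightarrow{\bP}0$ it forces $\|W^{\varepsilon}\|_{\infty}\xrightarrow{\bP}0$, i.e. $\|Z^{\varepsilon}-D^hX\|_{\infty}\xrightarrow{\bP}0$, which is SGD. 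RAC then follows from the same pathwise construction, since $\varepsilon\mapsto X(\cdot+\varepsilon h)$ is absolutely continuous with density identified by the difference-quotient limit, and the integrability $\bE[\|DX\|_{\cH}^p]<\infty$ from the first step supplies the last hypothesis of Sugita's theorem, giving $X\in\bD^{1,p}(\cS^p)$ with $DX$ the solution of \eqref{eq:SDEMallDeriv}.

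I expect the main obstacle to be the proof that $\|A^{\varepsilon}\|_{\infty}\xrightarrow{\bP}0$: the super-linear drift means none of these error terms is known to be integrable, so every convergence must be obtained in probability rather than in $L^2$. In particular, the gradient-mismatch term requires combining stochastic stability (Theorem~\ref{theorem:ExistenceUniquenessMao}, giving $X^{\varepsilon h}\to X$) with the joint $L^0$-continuity of Assumption~\ref{Assumption:2}(ii) to pass $\bar B^{\varepsilon}\to\nabla_x b(\cdot,\omega,X)$ in probability, while multiplying by the \emph{fixed} and genuinely $L^p$ process $D^hX$. Keeping this multiplication against the fixed limit process $D^hX$, rather than against the a~priori uncontrolled $Z^{\varepsilon}$, is the key bookkeeping device that makes $A^{\varepsilon}$ negligible and lets the monotone structure of the homogeneous part absorb $W^{\varepsilon}$ itself.
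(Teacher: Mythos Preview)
Your overall strategy---work on the Wiener space, produce a candidate linear SDE for $DX$, prove Stochastic G\^ateaux Differentiability via the convergence-in-probability Gr\"onwall of Proposition~\ref{Proposition:GronwallConProb}, then invoke the Sugita/Mastrolia--Possama\"\i--R\'eveillac characterisation---is exactly the paper's route. Your error decomposition for SGD is a legitimate (and slightly cleaner) variant of the paper's: you absorb the averaged gradients $\bar B^{\varepsilon},\bar\Sigma^{\varepsilon}$ into the \emph{homogeneous} part and multiply the mismatch by the fixed process $D^hX$, whereas the paper keeps the fixed coefficients $\nabla_x b(\cdot,\omega,X),\nabla_x\sigma(\cdot,\omega,X)$ in the homogeneous part and multiplies the mismatch by the increment quotient $P_\varepsilon=(X(\omega+\varepsilon h)-X(\omega))/\varepsilon$ (see \eqref{eq:MallDerivConProb4}--\eqref{eq:MallDerivConProb5}). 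Both work, and since $\bar B^{\varepsilon}$ inherits the one-sided bound and $|\bar\Sigma^{\varepsilon}|\leq L$ uniformly in $\varepsilon$, the stopping-time argument behind Proposition~\ref{Proposition:GronwallConProb} still goes through with $\varepsilon$-dependent coefficients.

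The genuine gap is your treatment of Ray Absolute Continuity. The sentence ``RAC then follows from the same pathwise construction, since $\varepsilon\mapsto X(\cdot+\varepsilon h)$ is absolutely continuous with density identified by the difference-quotient limit'' is not justified: SGD only gives convergence of the difference quotient \emph{in probability}, which says nothing about pathwise absolute continuity of $\varepsilon\mapsto \tilde X_h(\omega+\varepsilon h)$ for $\bP$-a.e.\ $\omega$ as required by Definition~\ref{definition:RayAbsoluteContinuity}. The paper does \emph{not} verify RAC directly. Instead it proves a separate a~priori estimate (Proposition~\ref{proposition:MallCalcRayAbsoluteCont})
\[
\sup_{\varepsilon\leq 1}\ \bE\Big[\,\Big\|\tfrac{X(\omega+\varepsilon h)-X(\omega)}{\varepsilon}\Big\|_\infty^{2}\,\Big]<\infty,
\]
obtained by applying It\^o's formula to $|P_\varepsilon|^2$ and exploiting monotonicity together with Assumption~\ref{Assumption:2}(iv); this uniform $L^2$ bound plus SGD gives uniform integrability of the difference quotient, hence Strong Stochastic G\^ateaux Differentiability via Corollary~\ref{corollary:HowToRAC}, and one concludes by Theorem~\ref{theo:Reveillacs results} rather than by checking RAC. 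This step is where $p>2$ (rather than $p\geq 2$) is genuinely used, cf.\ Remark~\ref{remark:SquareIntegrabilityProblems}. Your plan omits this bound entirely; without it, you have SGD and $DX\in L^p$ but neither RAC nor Strong SGD, so Sugita's theorem cannot be invoked.
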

The proof of Theorem \ref{theo-Mall-diff-monotone-SDEs} can be found in Section \ref{sec:Proof of 1st MallDiff main theorem}.
\begin{remark}[Notation]
At the simplest level, we have $X$ is $\bR^d$-valued and $W$ is $\bR^m$-valued. Therefore $b,\sigma$ are $\bR^d$- and $\bR^{d\times m}$-valued respectively. Hence we have the collection of one-dimensional SDEs
\begin{align*}
X^{(i)}(t)(\omega) = \theta^{(i)} + \int_0^t b^{(i)} (s, \omega, X(s)(\omega)) ds + \sum_{j=1}^m \int_0^t \sigma^{(i, j)}(s, \omega, X(s)(\omega)) dW^{(j)} (s), 
\end{align*}
where $i$ is an integer between 1 and $d$. 

The Malliavin Derivative $DX$ is therefore a $\bR^{d\times m}$ valued process and we get the system of equations
\begin{align*}
D_s^{(k)} X^{(i)} (t)(\omega) =& \sigma^{(i, k)}(s, \omega, X(s)(\omega))ds 
\\
& + \int_s^t U^{(i, k)}(s, r, \omega) dr + \sum_{j=1}^m \int_s^t V^{(i, j, k)} (s, r, \omega) dW^{(j)} (r) 
\\
&+ \int_s^t \Big\langle (\nabla_x b^{(i)})(r, \omega, X(r)(\omega)), D_s^{(k)}X(t)(\omega) \Big\rangle_{\bR^d} dr
\\
&+ \sum_{j=1}^m \int_s^t \Big\langle (\nabla_x\sigma^{(i, j)})(r, \omega, X(r)(\omega)), D_s^{(k)}X(t)(\omega) \Big\rangle_{\bR^d} dW^{(j)}(r), 
\end{align*}
for $i$ an integer between $1$ and $d$ and $k$ an integer between $1$ and $m$. 
\end{remark}

\begin{remark}[Mollification and non-differentiability of $b$ and $\sigma$]
\label{rem:mollification}
Using classic mollification arguments the assumptions of Theorem \ref{theo-Mall-diff-monotone-SDEs} concerning the behaviour of $x\mapsto b(\cdot,\cdot,x)$ and $x\mapsto \sigma(\cdot,\cdot,x)$ can be further weakened. Namely, $\sigma$ can be assumed to be uniformly Lipschitz as opposed to continuously differentiable and $b$ can be assumed to have left- and right-derivatives not necessarily equal to each other at every point.

Under these conditions, a canonical mollification argument allows to re-obtain Theorem \ref{theo-Mall-diff-monotone-SDEs} where in \eqref{eq:SDEMallDeriv} one replaces $\nabla_x b$ and $\nabla_x \sigma$ by two processes corresponding to their generalized derivatives. 
\end{remark}

If $b$ and $\sigma$ are assumed deterministic then one immediately obtains the familiar result.
\begin{corollary}[Deterministic coefficients case]
Suppose that $b:[0,T]\times \bR^d \to \bR^d$ and $\sigma:[0,T] \times \bR^d \to \bR^{d\times m}$ satisfy Assumption \ref{Assumption:1}. Further, suppose that $x\mapsto b(\cdot, x)$ and $x\mapsto \sigma(\cdot,x)$ are continuously differentiable in their spatial variables (uniformly in $t$). 

Then $X$ is Malliavin differentiable and $D_s X(t)=0$ for $T\geq s>t\geq 0$ while for $0\leq s\leq t\leq T$
\begin{align*}
D_s X(t)(\omega) = \sigma(s, X(s)(\omega)) 
&+\int_s^t \nabla_x b(r, X(r)(\omega)) D_s X(r)(\omega) dr
\\
\nonumber
&\qquad  \qquad 
+ \int_s^t \nabla_x \sigma(r, X(r)(\omega)) D_s X(r)(\omega) dW(r).
\end{align*}
\end{corollary}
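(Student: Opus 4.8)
The plan is to deduce the corollary directly from Theorem \ref{theo-Mall-diff-monotone-SDEs} by verifying that deterministic coefficients satisfying the stated hypotheses form a special case of Assumption \ref{Assumption:2}, with the natural choice $U \equiv 0$ and $V \equiv 0$. Once that reduction is in place, the representation \eqref{eq:SDEMallDeriv} collapses to the asserted formula simply by deleting the now-vanishing terms $\int_s^t U\,dr$ and $\int_s^t V\,dW$, and the statement $D_s X(t)=0$ for $s>t$ is inherited verbatim.

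First I would record that Assumption \ref{Assumption:1} is carried over unchanged, and that condition (i) of Assumption \ref{Assumption:2} is exactly the hypothesis of continuous spatial differentiability. Since $b$ and $\sigma$ do not depend on $\omega$, the perturbation $\omega \mapsto \omega + \varepsilon h$ leaves them unaltered, so I would set $U \equiv 0$ and $V \equiv 0$; condition (iii) then holds trivially, as the required moment integrals are identically zero. For condition (iv), the deterministic dependence gives $b(r, \omega + \varepsilon h, X(r)) = b(r, \omega, X(r))$ and $\sigma(r, \omega + \varepsilon h, X(r)) = \sigma(r, \omega, X(r))$ pointwise, so with $U = V = 0$ each difference quotient in (iv) is exactly zero and the limits hold trivially.

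The only genuine point to check is condition (ii). In the deterministic setting the maps there reduce to the $\varepsilon$- and $\omega$-independent quantities $x \mapsto \int_0^T |\nabla_x \sigma(t, x)|^2 dt$ and $x \mapsto \int_0^T |\nabla_x b(t, x)|^2 dt$, so joint continuity into $L^0(\Omega)$ (convergence in probability of constants) amounts to ordinary continuity of these two deterministic functions of $x$, together with their finiteness for each fixed $x$.

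The main, and essentially only, obstacle is therefore passing the spatial limit under the time integral in condition (ii). I would exploit the hypothesis that the spatial derivatives are continuous \emph{uniformly in} $t$: for a sequence $x_n \to x$ this yields $\nabla_x \sigma(t, x_n) \to \nabla_x \sigma(t, x)$ uniformly in $t \in [0,T]$, so the integrands converge uniformly and the finite-length integral converges. For $b$, continuity of $\nabla_x b$ on the compact set $[0,T] \times \bar B(x,1)$ supplies a uniform bound, and dominated convergence closes the argument; the same uniformity guarantees the finiteness $\int_0^T |\nabla_x b(t, x)|^2 dt \lesssim T \sup_{t}|\nabla_x b(t,x)|^2 < \infty$ for each fixed $x$. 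With (i)--(iv) verified, Theorem \ref{theo-Mall-diff-monotone-SDEs} applies, and substituting $U \equiv 0$ and $V \equiv 0$ into \eqref{eq:SDEMallDeriv} produces exactly the claimed expression.
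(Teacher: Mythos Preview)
Your proposal is correct and matches the paper's intent: the corollary is stated without proof because deterministic coefficients trivially satisfy Assumption \ref{Assumption:2} with $U\equiv 0$ and $V\equiv 0$, and you have spelled out exactly that reduction. The only nuance worth noting is that Assumption \ref{Assumption:2} (and hence Theorem \ref{theo-Mall-diff-monotone-SDEs}) requires Assumption \ref{Assumption:1} for some $p>2$, which is implicit in the corollary's hypotheses; otherwise your verification of (i)--(iv) is complete and the substitution into \eqref{eq:SDEMallDeriv} yields the claimed formula.
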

Assumption \ref{Assumption:2} is sharp for our construction, nonetheless, it can be slightly strengthened to Assumption \ref{Assumption:3} which is much easier to verify. 

\begin{assumption}
\label{Assumption:3}
Let $b: [0,T]\times \Omega\times \bR^d \to \bR^d$ and $\sigma: [0,T] \times \Omega\times\bR^d \to \bR^{d\times m}$ satisfy Assumption \ref{Assumption:1} for $p>2$. Further, suppose Assumption \ref{Assumption:2} (i) and (ii) hold and
\begin{enumerate}[(i')]
\setcounter{enumi}{2}
\item $b$ and $\sigma$ are Malliavin differentiable in the sense that $\forall x\in \bR^d$, $b(\cdot, \cdot, x) \in \bD^{1,p}(L^1([0,T]; \bR^d))$ and $\sigma(\cdot, \cdot, x) \in \bD^{1,p}(L^2([0,T]; \bR^{d\times m}))$,
\item The Malliavin derivatives of $b$ and $\sigma$ are progressively measurable and Lipschitz in their spacial variables i.e.~$\exists L>0$ constant such that $\forall (s, t, \omega) \in  [0,T]^2 \times \Omega$ and $x, y\in \bR^d$, $\bP$-almost surely
\begin{align*}
|D_s b(t, \omega, x) - D_sb(t, \omega, y)| &\leq L |x-y|, 
\\
|D_s \sigma(t, \omega, x) - D_s\sigma(t, \omega, y)| &\leq L |x-y|. 
\end{align*}
\end{enumerate}
\end{assumption}
The second main result of the section is the following theorem.
\begin{theorem}
\label{theo:WhatsHere}
Let $p>2$. Let Assumption \ref{Assumption:1} hold and denote by $X$ the unique solution of the SDE \eqref{eq:SDE} in $\cS^p$. Let $b$ and $\sigma$ satisfy Assumption \ref{Assumption:3}. Then the conclusion of Theorem \ref{theo-Mall-diff-monotone-SDEs} still holds: $X\in \bD^{1,p}(\cS^p)$ and $DX$ satisfies $D_s X(t)=0$ for $T\geq s>t\geq 0$ while for $0\leq s\leq t \leq T$
\begin{align}
\label{eq:SDEMallDeriv2}
D_s X(t)(\omega) =
& 
\sigma(s,\omega, X(s)(\omega)) 
+  \int_s^t (D_s b)(r, \omega, X(r)(\omega)) dr 
+ \int_s^t (D_s \sigma)(r, \omega, X(r)(\omega)) dW(r)
\\
\nonumber
&+\int_s^t \nabla_x b(r,\omega, X(r)(\omega)) D_s X(r)(\omega) dr + \int_s^t \nabla_x \sigma(r,\omega, X(r)(\omega)) D_s X(r)(\omega) dW(r).
\end{align}
\end{theorem}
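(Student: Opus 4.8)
The plan is to deduce Theorem \ref{theo:WhatsHere} directly from Theorem \ref{theo-Mall-diff-monotone-SDEs} by exhibiting Assumption \ref{Assumption:3} as a special instance of Assumption \ref{Assumption:2}. Concretely, I would set
\[
U(s,r,\omega) := (D_s b)(r,\omega,X(r)(\omega)), \qquad V(s,r,\omega) := (D_s \sigma)(r,\omega,X(r)(\omega)),
\]
so that, since $b(r,\cdot,x)$ is $\cF_r$-measurable and hence $(D_s b)(r,\cdot,x)=0$ for $s>r$, the quantity $\int_0^r U(s,r,\omega)\dot h(s)\,ds$ equals the directional Malliavin derivative $D^h b(r,\omega,X(r))$ (and likewise for $\sigma$). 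With this identification the representation \eqref{eq:SDEMallDeriv} collapses into \eqref{eq:SDEMallDeriv2}. As Assumption \ref{Assumption:3} retains (i) and (ii) verbatim, it suffices to verify that this choice of $U,V$ satisfies items (iii) and (iv) of Assumption \ref{Assumption:2}; the theorem then follows from Theorem \ref{theo-Mall-diff-monotone-SDEs}.

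Verifying (iii) is routine. Using the spatial Lipschitz bound (iv$'$) on the Malliavin derivatives I would estimate $|(D_s b)(r,\omega,X(r))| \le |(D_s b)(r,\omega,0)| + L|X(r)|$, and similarly for $\sigma$. The contribution of the first summand is finite precisely because $b(\cdot,\cdot,0)\in\bD^{1,p}(L^1)$ and $\sigma(\cdot,\cdot,0)\in\bD^{1,p}(L^2)$, whose $\bD^{1,p}$-norms are exactly the two expressions appearing in (iii); the contribution of $L|X(r)|$ is controlled by $\bE[\|X\|_\infty^p]<\infty$, which holds since $p>2$ by Theorem \ref{theorem:ExistenceUniquenessMao}. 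Both moment bounds in (iii) follow.

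The heart of the matter, and the main obstacle, is (iv): transferring the Stochastic G\^ateaux convergence from a fixed spatial point to the random point $X(r)$, and in the correct ($L^2$-in-$\omega$) topology. The key structural observation is that the difference quotient is Lipschitz in the spatial variable \emph{uniformly in} $\varepsilon$. Indeed, by Ray Absolute Continuity (available since $b(r,\cdot,x)\in\bD^{1,p}$) one has $b(r,\omega+\varepsilon h,x)-b(r,\omega,x)=\int_0^\varepsilon D^h b(r,\omega+uh,x)\,du$, and (iv$'$) with Cauchy--Schwarz gives $|D^h b(r,\omega+uh,x)-D^h b(r,\omega+uh,y)|\le L\sqrt T\,\|h\|_H|x-y|$; dividing by $\varepsilon$ shows the difference quotient has spatial Lipschitz constant $L\sqrt T\,\|h\|_H$ independently of $\varepsilon,r,\omega$. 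The candidate limit $D^h b$ enjoys the same bound by (iv$'$), so the error $E^\varepsilon(r,\omega,x):=\tfrac1\varepsilon\big(b(r,\omega+\varepsilon h,x)-b(r,\omega,x)\big)-D^h b(r,\omega,x)$ is uniformly Lipschitz in $x$.

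Given this, I would localise on $\Omega_M:=\{\|X\|_\infty\le M\}$, cover $\{|x|\le M\}$ by finitely many points $x_1,\dots,x_K$ at spacing $\eta$, and use $|E^\varepsilon(r,\omega,X(r))|\le \max_k|E^\varepsilon(r,\omega,x_k)|+2L\sqrt T\|h\|_H\,\eta$ on $\Omega_M$. For each fixed $x_k$, membership $b(\cdot,\cdot,x_k)\in\bD^{1,p}(L^1)$ with $p>2$ yields, via RAC and uniform integrability of the $L^p$-bounded difference quotients, the $L^2$-convergence $\bE[(\int_0^T|E^\varepsilon(r,\cdot,x_k)|dr)^2]\to0$; summing the finitely many indices kills the first term as $\varepsilon\to0$, the grid term is made small by choosing $\eta$ small, and the contribution of $\Omega_M^c$ is dispatched by letting $M\to\infty$ using the uniform bound $\int_0^T|E^\varepsilon(r,\omega,X(r))|dr\lesssim \int_0^T|E^\varepsilon(r,\omega,0)|dr+\|X\|_\infty$ together with uniform integrability. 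The identical argument with the $L^2([0,T])$ inner norm handles $\sigma$. The delicate point throughout is upgrading the fixed-point convergence, which a priori holds only in probability, to the $L^2(\Omega)$ convergence demanded by (iv): this is exactly where the extra integrability $p>2$ (rather than $p=2$) is spent. Once (iii) and (iv) are established, Theorem \ref{theo-Mall-diff-monotone-SDEs} applies and yields $X\in\bD^{1,p}(\cS^p)$ together with \eqref{eq:SDEMallDeriv2}.
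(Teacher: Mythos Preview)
Your overall strategy coincides with the paper's: reduce Assumption~\ref{Assumption:3} to Assumption~\ref{Assumption:2} (with $U=D b$, $V=D\sigma$) and invoke Theorem~\ref{theo-Mall-diff-monotone-SDEs}; your verification of (iii) is exactly Lemma~\ref{Lemma:FinMomMalDeriv:b+sigma}. For (iv), however, the paper takes a different route. It first proves an indistinguishability lemma (Lemma~\ref{lemma:MallCalcIndistinguishability1}) so that the RAC identity $f(t,\omega+\varepsilon h,x)-f(t,\omega,x)=\int_0^\varepsilon D^h f(t,\omega+rh,x)\,dr$ holds for \emph{all} $x$ on a single full-measure set; it then obtains convergence in probability (Lemma~\ref{lemma:MallDerivprelimConverge}) by applying the continuity of the Cameron--Martin shift (Proposition~\ref{proposition:ContinuityCamMartin}) directly to the random variable $\omega\mapsto\int_0^T|D^h f(t,\omega,X(t)(\omega))|^m\,dt$ and peeling off $X(\omega+\varepsilon h)$ versus $X(\omega)$ via the Lipschitz property of $Df$; finally it upgrades to mean convergence using the uniform $L^2$ bound of Lemma~\ref{lemma:MallFinSecMom1}. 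Your finite-cover/localisation argument is a valid alternative and is more self-contained in that it does not invoke Proposition~\ref{proposition:ContinuityCamMartin} as a black box, but two points deserve care. First, the uniform-in-$\varepsilon$ spatial Lipschitz bound on the difference quotient that lets you pass from $X(r)(\omega)$ to the nearest grid point $x_k$ requires the RAC identity at the \emph{random} point $X(r)(\omega)$; this does not follow from RAC at each fixed $x$ and needs precisely the countable-dense-set argument of Lemma~\ref{lemma:MallCalcIndistinguishability1}, which you skip. Second, ``$L^p$-bounded difference quotients'' is slightly optimistic: what follows from $b(\cdot,\cdot,x_k)\in\bD^{1,p}(L^1)$ together with Cameron--Martin and H\"older is $L^q$-boundedness for any $q<p$, which (since $p>2$) still yields the uniform integrability of the squares you need.
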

The proof can be found in Section \ref{sec:An Example of such a differentiable SDE}. We point out that the mollification Remark \ref{rem:mollification} applies to this result as well.

It is a well documented fact, see \cite{nualart2006malliavin}*{Theorem 2.2.1}, that if one has a SDE with deterministic and Lipschitz drift and diffusion coefficients then the Malliavin derivative is the solution of a homogeneous linear SDE. Both the SDE and the Malliavin Derivative have finite moments of all orders. Therefore the solution of the SDE exists in $\bD^{1, \infty}$. 

We study the case where the coefficients are random. SDEs of this kind do not always have finite moments of all orders, and the same will apply for the Malliavin derivative. In fact, the integrability of the derivative comes directly from the integrability of the Malliavin derivatives of $b$ and $\sigma$.


\subsection{Overview of the methodology and results on Wiener spaces}
\label{section:RAC+SGD}

It is important to note that the solution of an SDE is not continuous with respect to $\omega \in \Omega$. As the SDE exists in a probability space with the filtration generated by an $m$-dimensional Brownian motion, $\omega$ can be interpreted to mean the path of an individual Brownian motion plus any extra information about what happens when $t=0$. However, it will be shown that the random variables are continuous, and indeed differentiable, when perturbed with respect to a path out of the Cameron Martin space. Hence for this section we take $h\in H^{\otimes m}$, an $m$-dimensional Cameron Martin path and $\dot{h}$ to be its derivative unless stated otherwise. We will not emphasize the difference between $H$ and $H^{\otimes m}$ in this paper.

We start by introducing the concepts of \emph{Ray absolute continuity} and \emph{Stochastic G\^ateaux Differentiability} and the results yielding Malliavin differentiability under those properties.

Let $E$ be a separable Banach space. Let $L(H, E)$ be the space of all bounded linear operators $V:H\to E$.
\begin{definition}[Ray Absolutely Continuous map]
\label{definition:RayAbsoluteContinuity}
A measurable map $f:\Omega \to E$ is said to be \emph{Ray Absolutely Continuous} if $\forall h\in H$, $\exists$ a measurable mapping $\tilde{f}_h: \Omega \to E$ such that
\begin{align*}
\tilde{f}_h(\omega) = f(\omega) \quad \bP\mbox{-a.e.}
\end{align*}
and that $\forall \omega \in \Omega$, 
\begin{align*}
t \mapsto \tilde{f}_h(\omega + th) \quad \mbox{is absolutely continuous on any compact subset of $\bR$. }
\end{align*}
\end{definition}

\begin{definition}[Stochastically G\^ateaux differentiable]
\label{definition:StochasticallyGateauxDiff}
A measurable mapping $f:\Omega \to E$ is said to be \emph{Stochastically G\^ateaux differentiable} if there exists a measurable mapping $F:\Omega \to L(H, E)$ such that $\forall h\in H$, 
\begin{align*}
\frac{f(\omega+\varepsilon h) - f(\omega)}{\varepsilon} \xrightarrow{\bP} F(\omega)[h] \quad \mbox{as $\varepsilon \to 0$.}
\end{align*}
\end{definition}
Malliavin differentiability follows from \cite{sugita1985}*{Theorem 3.1} which was later improved upon by \cite{MastroliaPossamaiReveillac2017}*{Theorem 4.1}. We recall both results next.

\begin{theorem}[\cite{sugita1985}] 
\label{theorem:SugitaResult}
Let $p>1$. The space $\bD^{1, p}(E)$ is equivalent to the space of all random variables $f:\Omega \to E$ such that $f\in L^p(\Omega; E)$ is Ray Absolutely Continuous, Stochastically G\^ateaux differentiable and the Stochastic G\^ateaux derivative $F:\Omega \to L(H, E)$ is $F\in L^p(\Omega; L(H, E))$. 
\end{theorem}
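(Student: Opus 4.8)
The plan is to prove the asserted equivalence as two inclusions, the reverse one carrying essentially all the difficulty. For the easy inclusion $\bD^{1,p}(E)\subseteq\{f\in L^p:\ \text{RAC, SGD, with }L^p\text{ derivative}\}$, I would take $f\in\bD^{1,p}(E)$ and a sequence of smooth cylindrical functionals $f_n$ (the $E$-valued analogue of $\bS$) with $f_n\to f$ in $L^p(\Omega;E)$ and $Df_n\to Df$ in $L^p(\Omega;L(H,E))$. Each $f_n$ is trivially Ray Absolutely Continuous and Stochastically G\^ateaux differentiable, with directional derivative $\tfrac{d}{d\varepsilon}f_n(\omega+\varepsilon h)\big|_{\varepsilon=0}=\langle Df_n,h\rangle=D^h f_n$. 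The task is to transfer both properties to the limit. For SGD I would show $\varepsilon^{-1}(f(\omega+\varepsilon h)-f(\omega))\to\langle Df,h\rangle$ in probability: the Cameron--Martin quasi-invariance of $\bP$ under $\omega\mapsto\omega+\varepsilon h$ turns the $L^p$-convergence of $f_n,Df_n$ into convergence of the shifted objects, and since the definition of SGD only asks for convergence in probability, the weakest mode of convergence available is exactly what is needed. For RAC I would fix $h$, use quasi-invariance to pass to a subsequence converging along $\bP$-a.e.\ line $t\mapsto\omega+th$ together with its derivative, and deduce absolute continuity of the limit along those lines from the uniform $L^p$ control on the derivatives.

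For the hard inclusion, suppose $f\in L^p(\Omega;E)$ is RAC and SGD with derivative $F\in L^p(\Omega;L(H,E))$; I must show $f\in\bD^{1,p}(E)$ with $Df=F$. The first step is an integration-by-parts formula. Fixing $h\in H$, Ray absolute continuity supplies a version $\tilde f_h$ with $t\mapsto\tilde f_h(\omega+th)$ absolutely continuous, and Stochastic G\^ateaux differentiability identifies its a.e.\ derivative as $\langle F(\omega+th),h\rangle$, so the fundamental theorem of calculus gives $\tilde f_h(\omega+th)-\tilde f_h(\omega)=\int_0^t\langle F(\omega+sh),h\rangle\,ds$. Coupling this with the Cameron--Martin density and differentiating at $t=0$, I obtain, for every bounded smooth cylindrical $\varphi$, the weak identity
\begin{align*}
\bE\big[\langle F,h\rangle\,\varphi\big]=\bE\big[f\,\big(W(h)\,\varphi-D^h\varphi\big)\big],
\end{align*}
where the $L^p$ bound on $F$ keeps all terms finite and justifies differentiation under the integral by dominated convergence.

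The crux is the regularization step. I would construct smooth cylindrical approximations of $f$: fix an orthonormal basis $\{e_i\}$ of $H$, project onto $\pi_n=\mathrm{span}(e_1,\dots,e_n)$ via conditional expectation, and convolve in those finitely many Cameron--Martin directions with a Gaussian mollifier (the Ornstein--Uhlenbeck semigroup is an alternative smoother). The resulting $f_n$ are smooth cylindrical, and $f_n\to f$ in $L^p$. The delicate point is that their gradients $Df_n$ must converge to $F$ in $L^p(\Omega;L(H,E))$, and this is exactly where the identity above is used: it lets one commute differentiation past the conditional expectation and the finite-dimensional convolution, expressing $Df_n$ as the corresponding projection/mollification of $F$, whose $L^p$-convergence to $F$ is then standard. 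Having produced a graph-norm convergent sequence of cylindrical functionals, membership $f\in\bD^{1,p}(E)$ with $Df=F$ follows from the definition of the space as the closure of the cylindrical functionals.

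The main obstacle is this regularization step: showing that the gradients of the mollified functionals converge to $F$, rather than merely that the functionals converge to $f$, requires a careful interchange of differentiation with conditional expectation and convolution, justified only through the integration-by-parts identity together with Cameron--Martin quasi-invariance. Two structural features force this route rather than a chaos-based shortcut: $E$ is a general separable Banach space and $p$ is a general exponent larger than $1$, so neither the Wiener-chaos decomposition nor Hilbert-space orthogonality is available, and the duality needed to close the construction genuinely relies on $p>1$.
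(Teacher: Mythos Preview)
The paper does not prove this theorem at all: it is stated with the attribution \cite{sugita1985} and immediately followed by a remark, with no proof environment. The paper simply imports Sugita's characterization as a known tool and uses it (together with the later Theorem~\ref{theo:Reveillacs results}) to establish Malliavin differentiability of the SDE solution. So there is no ``paper's own proof'' to compare your proposal against.

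That said, your sketch is a reasonable outline of how the result is actually established in the cited literature. The structure you propose---Cameron--Martin quasi-invariance to transfer RAC and SGD to limits for the easy inclusion, and an integration-by-parts identity coupled with finite-dimensional projection/mollification for the hard inclusion---matches the standard approach. The delicate point you correctly flag is the commutation of the gradient with conditional expectation and mollification; in Sugita's original paper this is handled via the Ornstein--Uhlenbeck semigroup, which is somewhat cleaner than raw convolution because the semigroup's intertwining relation $DP_t=e^{-t}P_tD$ (appropriately interpreted) does the commutation automatically. Your mention of the OU semigroup as an alternative is therefore not just an alternative but arguably the more natural route. One point worth tightening: in the RAC argument for the easy inclusion, passing to a subsequence along $\bP$-a.e.\ line requires some care, since a single $\bP$-null set must work for all $t$ simultaneously; Sugita handles this by working with the $H$-smooth versions directly.
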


\begin{remark}
We know from standard references such as \cite{UestuenelZakai2000} that the map $t \mapsto \tilde{f}_h(\omega + th)$ is continuous as a map from $[0,1]\to L^0(\Omega)$. The point of proving the stronger absolute continuity is to find a representation of the form
$$
\tilde{f}_h(\omega + \varepsilon h) -  \tilde{f}_h(\omega) = \int_0^\varepsilon F(\omega+rh)[h] dr,
$$
where the object $F(\omega)$ is a candidate for the Malliavin Derivative. Proving Stochastic G\^ateaux Differentiability is then verifying that this object is a bounded linear operator and allows one to extend from G\^ateaux to Fr\'echet. Thus a random variable which is Ray Absolutely Continuous but not Stochastic G\^ateaux Differentiable has a Malliavin Directional Derivative in all directions, but there is a sequence of elements $h_n \in H$ such that $F(\omega)[h_n] \to \infty$. 

By contrast, if one has Stochastic G\^ateaux Differentiability but not Ray Absolute Continuity, then one can prove existence of the Malliavin Derivative but which is not in $L^1(\Omega)$ e.g. $\bE[ \| F(\omega)\|_{L(H, E)} ] = \infty$. 
\end{remark}

\begin{definition}[Strong Stochastically G\^ateaux differentiable]
\label{def:Reveillacs results}
Let $p>1$. A random variable $f\in L^p(\Omega; E)$ is said to be \emph{Strong Stochastically G\^ateaux differentiable} if there exists a measurable mapping $F:\Omega \to L(H, E)$ such that $\forall h\in H$
\begin{equation}
\label{eq:StrongStochasticGatDif}
\lim_{\varepsilon \to 0} \bE\Big[ \Big\| \frac{f(\omega+\varepsilon h) - f(\omega)}{\varepsilon} - F(\omega)[h]\Big\| \Big] \to 0.
\end{equation}

\end{definition}

\begin{theorem}[\cite{MastroliaPossamaiReveillac2017}]
\label{theo:Reveillacs results}
Let $p>1$. The space $\bD^{1, p}(E)$ is equivalent to the space of all random variables $f\in L^p(\Omega; E)$ which are Strong Stochastically G\^ateaux differentiable. 
\end{theorem}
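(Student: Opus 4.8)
The plan is to prove the equivalence by reducing to Sugita's characterization, Theorem \ref{theorem:SugitaResult}: I will show that a random variable $f \in L^p(\Omega;E)$ is Strong Stochastically G\^ateaux differentiable (in the sense of Definition \ref{def:Reveillacs results}) if and only if it is simultaneously Ray Absolutely Continuous, Stochastically G\^ateaux differentiable, and its derivative $F$ lies in $L^p(\Omega; L(H,E))$. The whole point of the strong formulation \eqref{eq:StrongStochasticGatDif} is that the single $L^1$-convergence statement silently encodes all three of Sugita's hypotheses, so the genuine work is to translate between $L^1$-convergence of difference quotients on the one hand and the triple of conditions (RAC, SGD, $L^p$-integrability of the derivative) on the other.

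First I would treat the inclusion $\bD^{1,p}(E) \subseteq \{\text{SSGD}\}$. Given $f \in \bD^{1,p}(E)$, Theorem \ref{theorem:SugitaResult} supplies an $E$-valued modification $\tilde f_h$ that is absolutely continuous along $h$, with derivative $F = Df \in L^p(\Omega;L(H,E))$. Ray absolute continuity yields the fundamental-theorem-of-calculus representation
\begin{align*}
\frac{f(\omega+\varepsilon h)-f(\omega)}{\varepsilon} - F(\omega)[h] = \frac{1}{\varepsilon}\int_0^\varepsilon \big( F(\omega+rh)[h] - F(\omega)[h]\big)\, dr.
\end{align*}
Taking expectations of the $E$-norm, I would bound the right-hand side by $\sup_{0\le r\le \varepsilon}\bE\big[\,\|F(\omega+rh)[h]-F(\omega)[h]\|\,\big]$ and control the shift via the Cameron-Martin theorem: the push-forward of $\bP$ under $\omega \mapsto \omega+rh$ is absolutely continuous with Girsanov density bounded in every $L^q$ uniformly for small $r$, so continuity of translation in $L^p$ (valid on the dense class of smooth cylinder functionals and extended by the density bound) gives $\bE[\|F(\omega+rh)[h]-F(\omega)[h]\|] \to 0$ as $r \to 0$. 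Averaging over $r\in[0,\varepsilon]$ then produces \eqref{eq:StrongStochasticGatDif}.

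For the reverse inclusion I would start from $f \in L^p(\Omega;E)$ satisfying \eqref{eq:StrongStochasticGatDif} with operator-valued map $F$. Since $L^1$-convergence implies convergence in probability, $f$ is immediately Stochastically G\^ateaux differentiable with the same $F$, so Definition \ref{definition:StochasticallyGateauxDiff} is met. To obtain Ray Absolute Continuity I would disintegrate $\bP$ along the direction $h$ using Cameron-Martin and invoke the one-dimensional fact that a map whose difference quotients converge in $L^1$ along almost every fibre $t\mapsto \tilde f_h(\omega+th)$ admits an absolutely continuous modification, which is exactly Definition \ref{definition:RayAbsoluteContinuity}. The decisive remaining step is to show $F \in L^p(\Omega;L(H,E))$; for this I would approximate $f$ by the conditional expectations $f_n = \bE[f\mid \cG_n]$ onto the $\sigma$-algebras $\cG_n$ generated by finitely many Wiener integrals $W(e_1),\dots,W(e_n)$. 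Each $f_n$ is a cylinder functional and hence lies in $\bD^{1,p}(E)$, the strong differentiability of $f$ passes to $f_n$ with derivative equal to the appropriate finite-dimensional projection of $F$, and the strong ($L^1$) convergence supplies the uniform integrability needed to show that $\{f_n\}$ is Cauchy in the graph norm $\|\cdot\|_{1,p}$. Closability of the Malliavin derivative (see \cite{nualart2006malliavin}) then forces $f \in \bD^{1,p}(E)$ with $Df = F$, in particular $F \in L^p$, and an appeal to Theorem \ref{theorem:SugitaResult} closes the equivalence.

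The hard part will be precisely the integrability $F \in L^p$ together with the graph-norm Cauchy estimate for the approximations $f_n$. This is where the strong formulation earns its keep: mere convergence in probability of the difference quotients (as in plain Stochastic G\^ateaux Differentiability) would be insufficient to control $\bE[\|Df_n - Df_m\|_{\cH}^p]$, whereas the $L^1$-convergence in \eqref{eq:StrongStochasticGatDif}, combined with the Cameron-Martin density bounds, delivers the uniform integrability required to pass the finite-dimensional derivatives to their $L^p$-limit. A subtle technical point to watch is that shifting $f$ by a Cameron-Martin direction preserves $L^p$-integrability only up to the Girsanov density, so the uniform-in-$\varepsilon$ bounds on the difference quotients must be extracted carefully, e.g.\ by a weak-compactness argument in the reflexive space $L^p(\Omega;E)$ identifying the weak limit with $F[h]$.
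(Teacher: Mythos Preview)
The paper does not prove this theorem; it is quoted from \cite{MastroliaPossamaiReveillac2017} and used as a black box (see the text immediately following the statement and the later appeals to it in Corollary~\ref{corollary:HowToRAC} and the proof of Theorem~\ref{theo-Mall-diff-monotone-SDEs}). There is therefore no ``paper's own proof'' for you to match.

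That said, your outline has a real gap in the reverse inclusion. You assume only the $L^1$-convergence \eqref{eq:StrongStochasticGatDif} together with $f\in L^p(\Omega;E)$, and you need to produce $F\in L^p(\Omega;L(H,E))$. Your mechanism is an approximation $f_n=\bE[f\mid\cG_n]$ and a claim that $\{f_n\}$ is Cauchy in the $\|\cdot\|_{1,p}$ graph norm, but nothing in your argument actually controls $\bE[\|Df_n\|_\cH^{\,p}]$ or $\bE[\|Df_n-Df_m\|_\cH^{\,p}]$: the only quantitative input you invoke is $L^1$-convergence of difference quotients and Cameron--Martin density bounds, which at best upgrade the convergence to $L^q$ for $q<p$ (since $f\in L^p$ and the Girsanov densities lie in every $L^r$), not to $L^p$. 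The phrase ``the strong ($L^1$) convergence supplies the uniform integrability needed to pass the finite-dimensional derivatives to their $L^p$-limit'' is precisely the step that requires justification and, as written, does not follow. In \cite{MastroliaPossamaiReveillac2017} this difficulty is handled by a more delicate argument (their notion of SSGD is formulated so that convergence holds in $L^q$ for every $q<p$, and the $L^p$-bound on $DF$ is then extracted via a separate lemma); you would need either to strengthen the hypothesis in the same way or to supply an independent argument that $L^1$-SSGD plus $f\in L^p$ forces $F\in L^p$, which is not obvious.
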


The merit of \cite{sugita1985} is that it allows one to prove Malliavin differentiability by first establishing existence of a G\^ateaux derivative and then extending to the full Frech\'et derivative. The convergence of the G\^ateaux derivative in probability is a very weak condition that is much easier to prove than full Malliavin differentiability.

\cite{MastroliaPossamaiReveillac2017} extends this result to the stronger Strong Stochastic G\^ateaux Differentiability condition and removed the Ray Absolute Continuity condition. 

Both of these methods have their merits. While studying different examples of processes with monotone growth, we became interested in the particular example where the drift term has polynomial growth of order $q$ but only finite moments up to $p<q-2$. In this case, one cannot in general find a dominating function for the error terms coming from the drift of the SDE while trying to prove Stochastic G\^ateaux Differentiability. It therefore became necessary to prove only a convergence in probability statement. 

\begin{corollary}
\label{corollary:HowToRAC}
Suppose a measurable map $f:\Omega \to E$ is Stochastically G\^ateaux Differentiable and additionally that for $\delta>0$
\begin{equation}
\label{eq:SSGD=RAC}
\sup_{\varepsilon\leq 1} \bE\Bigg[  \Big| \frac{ f(\omega + \varepsilon h) - f(\omega) }{\varepsilon}\Big|^{1+\delta} \Bigg]<\infty. 
\end{equation}
Then $f$ is Malliavin Differentiable (and so $f$ is Ray Absolutely Continuous). 
\end{corollary}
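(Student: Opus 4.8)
The plan is to exploit the integrability bound \eqref{eq:SSGD=RAC} to upgrade the convergence-in-probability that defines Stochastic G\^ateaux differentiability (Definition \ref{definition:StochasticallyGateauxDiff}) into convergence in $L^1(\Omega; E)$, that is, into \emph{Strong} Stochastic G\^ateaux differentiability in the sense of Definition \ref{def:Reveillacs results}. Once this is achieved, Theorem \ref{theo:Reveillacs results} delivers $f\in\bD^{1,p}(E)$, i.e.\ Malliavin differentiability, and the Sugita characterization of Theorem \ref{theorem:SugitaResult} then guarantees that every element of $\bD^{1,p}(E)$ is Ray Absolutely Continuous, closing the argument in the order stated by the corollary.

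First I would fix $h\in H$ and consider the family of difference quotients
\[
\Delta_\varepsilon := \frac{f(\omega+\varepsilon h) - f(\omega)}{\varepsilon}, \qquad \varepsilon \in (0,1].
\]
By hypothesis $\Delta_\varepsilon \xrightarrow{\bP} F(\omega)[h]$ as $\varepsilon\to 0$, while \eqref{eq:SSGD=RAC} supplies $\sup_{\varepsilon\leq 1}\bE[\,|\Delta_\varepsilon|^{1+\delta}]<\infty$ for some $\delta>0$. Since $t\mapsto t^{1+\delta}$ is convex with $t^{1+\delta}/t = t^\delta \to \infty$, the de la Vall\'ee-Poussin criterion shows that $\{\Delta_\varepsilon : \varepsilon\in(0,1]\}$ is uniformly integrable.

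The key step is then Vitali's convergence theorem: a family that converges in probability and is uniformly integrable converges in $L^1$ and has an integrable limit. Applying it to $\Delta_\varepsilon$ yields
\[
\lim_{\varepsilon\to 0}\bE\Big[\Big\| \frac{f(\omega+\varepsilon h) - f(\omega)}{\varepsilon} - F(\omega)[h]\Big\|\Big] = 0,
\]
which is precisely \eqref{eq:StrongStochasticGatDif}. As $h\in H$ was arbitrary, $f$ is Strong Stochastically G\^ateaux differentiable; recalling that in the ambient setting $f\in L^p(\Omega; E)$ (e.g.\ $f$ is a component of an $\cS^p$ solution), Theorem \ref{theo:Reveillacs results} gives $f\in\bD^{1,p}(E)$. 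Finally, Theorem \ref{theorem:SugitaResult} identifies $\bD^{1,p}(E)$ with the Stochastically G\^ateaux differentiable, Ray Absolutely Continuous maps whose derivative lies in $L^p$; in particular $f$ is Ray Absolutely Continuous.

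I expect the only genuine obstacle to be the verification of uniform integrability, and this is exactly where the strict exponent $1+\delta$ with $\delta>0$, rather than mere $L^1$-boundedness, is indispensable: the superlinear moment rules out an escape of mass in the difference quotients, so that convergence in probability can be promoted to $L^1$. Everything downstream is a direct invocation of the cited characterizations of $\bD^{1,p}(E)$.
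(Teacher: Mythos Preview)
Your proposal is correct and follows essentially the same approach as the paper's own proof: the $L^{1+\delta}$ bound in \eqref{eq:SSGD=RAC} yields uniform integrability of the difference quotients, which together with the convergence in probability from Stochastic G\^ateaux differentiability upgrades to $L^1$ convergence, i.e.\ Strong Stochastic G\^ateaux differentiability; then Theorem \ref{theo:Reveillacs results} gives Malliavin differentiability and Theorem \ref{theorem:SugitaResult} yields Ray Absolute Continuity. Your explicit naming of the de la Vall\'ee-Poussin criterion and Vitali's theorem simply makes precise what the paper leaves implicit.
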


\begin{proof}
Equation \eqref{eq:SSGD=RAC} implies that the collection of random variables $\big( ({f(\omega+\varepsilon h) - f(\omega))}/{\varepsilon}\big)_{\varepsilon\leq1}$ is uniformly integrable. Stochastic G\^ateaux Differentiability means that this collection of random variables converges in probability to a limit. Since $\delta>0$, we conclude that the sequence of random variables converges in mean, or equivalently we have Strong Stochastic G\^ateaux differentiability. Theorem \ref{theo:Reveillacs results} shows this is equivalent to Malliavin Differentiability and Theorem \ref{theorem:SugitaResult} implies we must have Ray Absolute Continuity. 
\end{proof}

The convergence conditions on $U$ and $V$ in Assumption \ref{Assumption:2}(iii) and (iv) could equivalently been stated in terms of a \emph{Ray Absolute Continuity} and \emph{Stochastic G\^ateaux Differentiability} criterion instead of \emph{Strong Stochastic G\^ateaux Differentiability}. 

\subsubsection*{Classical results on the Cameron Martin transforms}

We recall two useful results from \cite{UestuenelZakai2000}, but first we introduce the notation for a Dol\'eans-Dade exponential over $[0,T]$ of some sufficient integrable $\bR^m$-valued process, $(M(t))_{t\in[0,T]}$, namely, we define for $t\in[0,T]$ and an $m$-dimensional Brownian motion $W$,
\begin{align}
 \label{eq:DDexponential}
	\cE(M)(t)& = \exp\Big( \int_0^t M(s) dW(s) - \cfrac12\int_0^t {|M(s)|^2} ds \Big).
\end{align}

\begin{proposition}[The Cameron-Martin Formula -- \cite{UestuenelZakai2000}*{Appendix B.1}]
\label{proposition:CamMarForm1}
Let $F$ be an $\cF_T$-measurable random variable. For $h\in H$ let $\cE(\dot h)(\cdot)$ be the associated Dol\'eans-Dade exponential. 

Then, when both sides are well defined, 
\begin{align*}
\bE\big[\, F(\omega + h)\,\big] 
=  \bE\Big[ F \exp\Big( \int_0^T \dot{h}(s) dW(s) - \frac12\int_0^T {|\dot{h}(s)|^2} ds\Big) \Big] 
= \bE\big[\, F(\omega)\cE(\dot h)(T)\,\big] . 
\end{align*}
Moreover, $\forall h\in H$ and $\forall p\geq 1$ that
$
\cE(\dot h)(\cdot) \in \cS^p([0,T]). 
$
\end{proposition}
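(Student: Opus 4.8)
The plan is to deduce both the integrability claim and the change-of-variables formula from Girsanov's theorem, exploiting that $\dot h$ is a \emph{deterministic} element of $L^2([0,T])$, which makes the relevant Novikov conditions automatic. I would first dispose of the integrability statement $\cE(\dot h)(\cdot)\in\cS^p$. Because $\dot h$ is deterministic, the process $t\mapsto\int_0^t \dot h(s)\,dW(s)$ is a Gaussian martingale, and a direct computation gives the factorisation
\begin{align*}
\cE(\dot h)(t)^p = \cE(p\dot h)(t)\,\exp\Big(\tfrac{p^2-p}{2}\int_0^t |\dot h(s)|^2\,ds\Big),
\end{align*}
where $\cE(p\dot h)$ is again a positive martingale with $\bE[\cE(p\dot h)(t)]=1$ and the exponential factor is bounded by $\exp(\tfrac{p^2}{2}\int_0^T|\dot h|^2\,ds)<\infty$. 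Hence $\sup_{t\le T}\bE[\cE(\dot h)(t)^p]<\infty$, and since $\cE(\dot h)$ is itself a nonnegative martingale, Doob's $L^p$-maximal inequality upgrades this to $\bE[\sup_{t\le T}\cE(\dot h)(t)^p]<\infty$ for $p>1$, with the case $p=1$ following by domination from $p=2$. This gives $\cE(\dot h)\in\cS^p$ for all $p\ge1$.

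For the formula itself I would argue on the canonical Wiener space, where $\omega\mapsto\omega+h$ is the genuine pointwise shift of paths and $F(\omega+h)$ means $F$ evaluated along the shifted trajectory. Since $\dot h$ is deterministic and square-integrable, Novikov's condition $\bE[\exp(\tfrac12\int_0^T|\dot h|^2\,ds)]<\infty$ holds, so $\cE(\dot h)(T)$ has unit expectation and $d\bQ:=\cE(\dot h)(T)\,d\bP$ defines a probability measure. Girsanov's theorem then shows that under $\bQ$ the process $W^{\bQ}(t):=W(t)-\int_0^t\dot h(s)\,ds=W(t)-h(t)$ is a Brownian motion. The right-hand side of the claimed identity is exactly $\bE_{\bQ}[F]$, and writing $W=W^{\bQ}+h$ we have $F=F(W^{\bQ}+h)$; because the $\bQ$-law of $W^{\bQ}$ equals the $\bP$-law of $W$ (both are Wiener measure), we may replace $W^{\bQ}$ by $W$ and take $\bP$-expectation to obtain $\bE_{\bQ}[F]=\bE[F(\omega+h)]$, which is precisely the asserted equality of the two expressions.

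I expect the main obstacle to be the final substitution step and the ``when both sides are well defined'' caveat, rather than the probabilistic mechanism itself. Replacing $W^{\bQ}$ by $W$ must be justified through the equality of laws, which is cleanest to carry out first for bounded cylinder functionals $F=f(\int_0^T g_1\,dW,\dots,\int_0^T g_n\,dW)$ with $f$ bounded continuous and $g_i$ deterministic: there the identity reduces to a finite-dimensional Gaussian computation, in which the shift moves the mean of a Gaussian vector by $(\int_0^T g_i\dot h\,ds)_i$ and completing the square produces exactly the density $\cE(\dot h)(T)$. One then extends to general measurable $F$ by a monotone-class argument, the $\cS^p$ integrability from the first step guaranteeing that both sides remain finite and that the limiting procedure is legitimate precisely on the class of $F$ for which the two expressions are well defined.
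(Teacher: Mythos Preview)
Your argument is correct and is the standard textbook route. Note, however, that the paper does not supply its own proof of this proposition: it is quoted as a classical result from \cite{UestuenelZakai2000}*{Appendix B.1} and stated without argument, so there is nothing to compare against. The factorisation $\cE(\dot h)(t)^p=\cE(p\dot h)(t)\exp\big(\tfrac{p^2-p}{2}\int_0^t|\dot h|^2\,ds\big)$ combined with Doob's $L^p$-maximal inequality is exactly the expected proof of the $\cS^p$ claim, and the Girsanov/monotone-class derivation of the shift formula is precisely what the cited reference does.
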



\begin{proposition}[Continuity of the Cameron Martin Transform -- \cite{UestuenelZakai2000}*{Lemma B.2.1}]
\label{proposition:ContinuityCamMartin}
The map $\tau_h:[0,1] \to L^0(\Omega)$ defined by $t\mapsto f(\omega + t h)$ is continuous map from a compact interval of the real line to a measurable function with respect to the topology of convergence in probability. 
\end{proposition}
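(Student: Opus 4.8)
The plan is to prove sequential continuity: fix $t_0\in[0,1]$ and an arbitrary sequence $t_n\to t_0$, and show that $f(\omega+t_n h)\to f(\omega+t_0 h)$ in probability. I would first establish the claim for a rich class of \emph{nice} test functions, where pointwise continuity is transparent, and then transfer it to an arbitrary measurable $f$ by an approximation argument whose crucial feature is uniformity in $t\in[0,1]$, supplied by the Cameron--Martin formula (Proposition \ref{proposition:CamMarForm1}).

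First I would treat bounded continuous cylindrical functions $g=\phi\big(W(h_1),\dots,W(h_n)\big)$ with $\phi$ bounded continuous. Since the Cameron--Martin shift acts affinely on the Gaussian coordinates, $W(h_i)(\omega+th)=W(h_i)(\omega)+t\langle h,h_i\rangle_H$, the map $t\mapsto g(\omega+th)$ is continuous for \emph{every} fixed $\omega$. Hence $g(\omega+t_n h)\to g(\omega+t_0 h)$ pointwise, and bounded convergence upgrades this to convergence in probability, settling $\tau_h$-continuity on the test class.

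The key step is a uniform-in-$t$ approximation estimate. Given general measurable $f$, choose bounded continuous cylindrical $g_k$ with $g_k\to f$ in probability. For any $\delta>0$, applying the Cameron--Martin formula with the shift $t\dot h$ gives
\begin{align*}
\bP\big[\,|f(\omega+th)-g_k(\omega+th)|>\delta\,\big]
= \bE\big[\,\1_{\{|f-g_k|>\delta\}}(\omega)\,\cE(t\dot h)(T)\,\big].
\end{align*}
By Cauchy--Schwarz this is at most $\bP[|f-g_k|>\delta]^{1/2}\,\bE[\cE(t\dot h)(T)^2]^{1/2}$, and a direct computation yields $\bE[\cE(t\dot h)(T)^2]=\exp\big(t^2\|\dot h\|_{L^2}^2\big)\le \exp\big(\|\dot h\|_{L^2}^2\big)$ for all $t\in[0,1]$ (this is exactly the integrability of the Dol\'eans--Dade exponential guaranteed by Proposition \ref{proposition:CamMarForm1}). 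Thus the approximation error is controlled \emph{uniformly} in $t\in[0,1]$ and can be made arbitrarily small by taking $k$ large.

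Finally I would assemble the pieces by an $\varepsilon/3$ argument: writing
\begin{align*}
\bP\big[\,|f(\omega+t_n h)-f(\omega+t_0 h)|>\delta\,\big]
&\le \bP\big[\,|f(\omega+t_n h)-g_k(\omega+t_n h)|>\tfrac{\delta}{3}\,\big]
\\
&\quad + \bP\big[\,|g_k(\omega+t_n h)-g_k(\omega+t_0 h)|>\tfrac{\delta}{3}\,\big]
\\
&\quad + \bP\big[\,|g_k(\omega+t_0 h)-f(\omega+t_0 h)|>\tfrac{\delta}{3}\,\big],
\end{align*}
the first and third terms are small once $k$ is large, uniformly in $t_n,t_0\in[0,1]$ by the previous step, while the middle term tends to $0$ as $n\to\infty$ for fixed $k$ by the test-function case. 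The main obstacle is precisely this transfer from the dense test class to the general measurable $f$: plain density in $L^0(\Omega)$ is not preserved under composition with the shift $\omega\mapsto\omega+th$, and what rescues the argument is the quasi-invariance of Wiener measure, i.e.\ the uniform second-moment bound on $\cE(t\dot h)(T)$ over $t\in[0,1]$ furnished by Proposition \ref{proposition:CamMarForm1}.
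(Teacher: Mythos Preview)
Your argument is correct and is essentially the standard proof of this classical result. Note, however, that the paper does not give its own proof of Proposition \ref{proposition:ContinuityCamMartin}: it is merely recalled from \cite{UestuenelZakai2000}*{Lemma B.2.1} as a preliminary fact, so there is no in-paper proof to compare against. Your approach---pointwise continuity on bounded continuous cylindrical functions via the affine action $W(h_i)(\omega+th)=W(h_i)(\omega)+t\langle h,h_i\rangle_H$, followed by a Cameron--Martin change of measure to make the $L^0$-approximation uniform in $t\in[0,1]$, and an $\varepsilon/3$ splicing---is exactly the argument one finds in \cite{UestuenelZakai2000}. The only point worth flagging is that you should make explicit that bounded continuous cylindrical functions are dense in $L^0(\Omega,\bP)$ on the Wiener space, which is standard but is the hypothesis your approximation step relies on.
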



\subsection{Examples}
\label{sec:examples}

In this section, we discuss some interesting examples which emphasize the scope and sharpness of the assumptions made.
 
\begin{example}[Concerning the continuity of $s\mapsto D_s X(\cdot)$]
Previous works on Malliavin calculus, see for example \cite{nualart2006malliavin}, treat the solution of this SDE as being continuous in $s$. While this is true for those examples studied, it is not true in the general case that we study here. We only have that it is square integrability; this example shows that it is not necessary for the derivative to be continuous in $s$. Take $g\in L^2([0,T])$ be a deterministic discontinuous function (a step function would be adequate) and assume the one dimensional setting. Consider $\sigma$ of the form
\begin{align*}
\sigma(t, \omega, x) = x + \int_0^t g(s) dW(s) 
\quad\textrm{and}\quad  
b(t, \omega, x) = 0.
\end{align*}
Hence $X(t)$ satisfies 
$
X(t) = 1 + \int_0^t \big[ X(s) + \int_0^s g(r)dW(r)\big] dW(s).
$
It can be shown that the explicit solution of this equation is
\begin{align*}
X(t) = \exp\Big(W(t) - \frac{t}{2}\Big) \Big[ 1 
& - \int_0^t\int_0^r \exp\big(\frac{r}{2} - W(r)\big) g(u)du dr 
\\
&+ \int_0^t \int_0^r \exp\big(\frac{r}{2} - W(r)\big) g(u)dW(u) dr \Big].
\end{align*}
Note that, as expected, $X$ is a continuous process. 

The process $V$, which represents the Malliavin derivative of $\sigma$, is 
\begin{align*}
V(s, t, \omega) = D_s \sigma(t, \omega, X(t)(\omega)) = g(s) \1_{(0,t)}(s) 
\quad \Rightarrow \quad
\int_s^t V(s, r, \omega) dW(r) = g(s)[ W(t) - W(s)].
\end{align*}
Clearly, the latter map is not continuous in $s$. The Malliavin derivative of $X$ solves
\begin{align*}
D_sX(t) = X(s) + \int_0^s g(r) dW(r) + g(s)[ W(t) - W(s)] + \int_s^t D_sX(r) dW(r).
\end{align*}
Define $J_s(t) = \exp\Big( [W(t) - W(s)] - \tfrac{t-s}{2}\Big)$. Then the Malliavin derivative has the explicit solution
\begin{align*}
D_sX(t) = J_s(t)\Big[ X(s) + \int_0^s g(r) dW(r) + g(s) \Big( \int_s^t J_s(r)^{-1} dW(r) - \int_s^t J_s(r)^{-1} dr\Big) \Big].
\end{align*}
Since $g$ is assumed not to be continuous, this will also not be continuous in $s$. 
\end{example}

We present a case where the coefficients are not Malliavin differentiable in general but are only differentiable on the set where 
the solution $X$ takes its values. In other words,  Assumption \ref{Assumption:2} is satisfied but Assumption \ref{Assumption:3} is not.

\begin{example}[Malliavin Differentiable on the right manifold]

Let $d=m=1$ for simplicity. Let $b(t, \omega, x) = -x$ and
\begin{equation*}
\sigma(t, \omega(t), x) =\begin{cases}
(x-1)^2(x+1)^2&,  x\in[-1,1]\\
\phi(x)\cdot f(\omega(t))&,  |x|>1
\end{cases},
\end{equation*}
where $\phi \in C^\infty$, $\phi(x)=0$ for $|x|\leq1$ and $\phi(x)=1$ for $|x|\geq 2$. The function $f$ is any function $f:\bR\to\bR$ which is bounded, continuous but not differentiable and $\omega$ is the path of the Brownian motion. 

An example of such a function $f$ could be
\begin{equation*}
f(x) =\begin{cases}
W'(x)&, x\in[-1,1]\\
-2&, |x|>1
\end{cases},
\end{equation*}
where $W'(x)$ is the Weierstrass function. The Weierstrass function is continuous but not differentiable anywhere and satisfies $W'(-1)=W'(1)=-2$. The latter implies that $f$ is continuous. Hence $f(\omega(t))$ will not be Malliavin differentiable but $\varepsilon \mapsto f(\omega(t)+\varepsilon h(t))$ will be continuous. 

The derivative of $\sigma$ will satisfy
\begin{equation*}
{\partial_x \sigma}(t,\omega, x) = \begin{cases}
4x(x-1)(x+1)&, x\in[-1,1] \\
\phi'(x)\cdot f(\omega(t))&, 1<x<2 \\
0&,  |x|>2
\end{cases},
\end{equation*}
so since $f$ is bounded, we conclude that $\sigma$ is Lipschitz $\forall \omega \in \Omega$ and differentiable. 

When the initial conditions determine that the process starts inside the interval $[-1, 1]$, this is a so-called Wright-Fisher process (see \cite{mode2012stochastic}) and the solution will remain within the interval $[-1, 1]$ with probability 1. This is important because the non-Malliavin Differentiability only affects the system when the process exits the $[-1,1]$ interval. The conditions of Assumption \ref{Assumption:2} are satisfied but $\sigma(\cdot , x)$ is not Malliavin differentiable for all $x\in \bR^d$. 
\end{example}

\begin{remark}[The square-integrability case]
\label{remark:SquareIntegrabilityProblems}
In \cite{MastroliaPossamaiReveillac2017}, it is proved that one does not require the Ray Absolute Continuity condition if one can prove a \emph{Strong Stochastic G\^ateaux Differentiability} condition, see Theorem \ref{theo:Reveillacs results} and Equation \eqref{eq:StrongStochasticGatDif}. However, in \cite{imkeller2016note}, the authors provide a random variable $Z\in \bD^{1,2}$ which is not Strong Stochastic G\^ateaux differentiable in the sense that
\begin{align*}
\bE\Big[ \Big| \frac{Z(\omega + \varepsilon h) - Z(\omega)}{\varepsilon} - D^hZ\Big|^2\Big] \nrightarrow 0,\quad \textrm{as }\ \varepsilon \to 0. 
\end{align*}
It is however true that for all values $q\in[1, 2)$
\begin{align*}
\bE\Big[ \Big| \frac{Z(\omega + \varepsilon h) - Z(\omega)}{\varepsilon} - D^hZ\Big|^q\Big] \rightarrow 0,\quad \textrm{as }\ \varepsilon \to 0.
\end{align*}
In our framework, it is necessary to study the square of incremements of the process due to the nature of the monotonicity property. Therefore we require that our SDE has finite moment of order $p$ for some $p>2$. However, in light of the example provided in \cite{imkeller2016note}, we believe (but do not show) that that there exists a case where the solution to an SDE of the form \eqref{eq:SDE} which has finite moments of order up to $p= 2$ which is Malliavin Differentiable. Stochastic G\^ateaux Differentiability would follow as before, but it was unclear to us how one would prove Ray Absolute Continuity of such a process. 
\end{remark}

\begin{remark}[The spatial Lipschitz condition for the Malliavin Derivatives of $b$ and $\sigma$]
\label{rem:cautionarytale}
In Assumption \ref{Assumption:3} (iv') we assume that $Db$ and $D\sigma$ are Lipschitz in the spacial variable. We chose this condition because it is easy to verify and strong enough to ensure that $\forall x\in \bR^d$ 
\begin{align*}
\bE\Big[ \Big( \int_0^T \Big( \int_0^t |D_sb(t, \omega, X(t))|^2 ds \Big)^{\tfrac{1}{2}} dt \Big)^p\Big] < \infty, \quad
\bE\Big[ \Big( \int_0^T \int_0^t |D_s\sigma(t, \omega, X(t))|^2 ds dt \Big)^{\tfrac{p}{2}} \Big] < \infty.
\end{align*}
However, this condition is by no means necessary. One could consider the case where $Db$ is locally Lipschitz in space and satisfies a linear growth condition and equivalently prove Theorem \ref{theo:WhatsHere}. However, the proof is more involved as it involves a careful interplay using H\"older's inequality between the maximal integrability of $X$, $Db$, $D\sigma$ and several other stochastic terms. 
 
\end{remark}

\subsection[Proof of the 1st main result]{Proofs of the 1st main result - Theorem \ref{theo-Mall-diff-monotone-SDEs}}
\label{sec:Proof of 1st MallDiff main theorem}

In what follows, the choice of $\theta$ (the initial condition in \eqref{eq:SDE}) does not affect the Malliavin derivative because $\theta$ is $\cF_0$-measurable. If $Y$ is $\cF_t$-measurable then $D_s Y=0$ for any $t<s$, see \cite{nualart2006malliavin}*{Corollary 1.2.1}.

\subsubsection*{Existence and Uniqueness of the Malliavin derivative $D_sX(t)$}
We start by establishing that \eqref{eq:SDEMallDeriv} has a unique solution where $X$ solves \eqref{eq:SDE}. At this point, nothing is said about the solution of \eqref{eq:SDEMallDeriv} being the Malliavin derivative to $X$ solution of \eqref{eq:SDE}, showing it is the subsequent step.
\begin{theorem}
\label{theo:ExistenceUniqueness for Candidate DX}
Let $p>2$. For $(s,t)\in [0,T]^2$, let $X$ be the solution to the SDE \eqref{eq:SDE} under Assumption \ref{Assumption:2}. Let $(M_s(t))$ be defined by the matrix of $L^2([0,T])$-valued SDEs
\begin{align}
\label{eq:SDEMallCandidate}
M_s(t)(\omega) =& \sigma(s,\omega, X(s)(\omega)) +  \int_s^t U(s, r, \omega) dr + \int_s^t V(s, r, \omega) dW(r)
\\
\nonumber
&+\int_s^t \nabla_x b(r,\omega, X(r)(\omega)) M_s(r)(\omega) dr + \int_s^t \nabla_x \sigma(r,\omega, X(r)(\omega)) M_s (r)(\omega) dW(r),
\end{align}
for $s<t$ and $M_s(t)=0$ for $s>t$. 

Then a unique solution exists in $\cS^p([0,T]; L^2([0,T]))$ for \eqref{eq:SDEMallCandidate} and the process $M$ has finite $p^{th}$ moment, namely 
\begin{align*}
\bE\Big[ \Big(\sup_{t\in[0,T]} \int_0^T |M_s (t)|^2 ds\Big)^{\tfrac{p}{2}}\Big] < \infty. 
\end{align*}
\end{theorem}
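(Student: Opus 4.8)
The plan is to treat \eqref{eq:SDEMallCandidate} slice-by-slice in the Malliavin time $s$, using the linear-SDE theory already in hand, and to assemble the slices into an $L^2([0,T])$-valued object only at the level of the final moment estimate. This ordering is forced on us: the sharp integrability of Assumption \ref{Assumption:2}(iii) does not survive a Fubini swap (cf.\ Remark \ref{rem:HsarpCondsFubini}), so any argument that tries to integrate per-slice $\cS^p$ bounds over $s$ will fail, and the $s$-integration must be carried out \emph{inside} an energy estimate.

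First I would fix $s$ and read \eqref{eq:SDEMallCandidate} as a linear SDE on $[s,T]$ of the form \eqref{eq:SDEDeriv}, with coefficient fields $B(r,\omega)=\nabla_x b(r,\omega,X(r))$ and $\Sigma(r,\omega)=\nabla_x\sigma(r,\omega,X(r))$, forcing terms $U(s,\cdot)$ and $V(s,\cdot)$, and ``initial value'' $\theta_s=\sigma(s,\omega,X(s))$ imposed at time $s$. I then check Assumption \ref{Assumption:linear}: differentiating the one-sided Lipschitz bound on $b$ along an arbitrary direction yields $x^T B(r,\omega)x\le L|x|^2$, while the global Lipschitz bound on $\sigma$ gives $|\Sigma|\le L$ and hence $\int_0^T\|\Sigma\|_{L^\infty}^2\,dr\le L^2T$; the integrability of $\theta_s$ comes from $|\sigma(s,X(s))|\le|\sigma(s,0)|+L|X(s)|$ together with $X\in\cS^p$, and by Fubini the inner integrals $\int_s^T|U(s,r)|\,dr$, $\int_s^T|V(s,r)|^2\,dr$ are a.s.\ finite for a.e.\ $s$ thanks to Assumption \ref{Assumption:2}(iii). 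Theorem \ref{theorem:ExistenceUniquenessMaoLinearSDE} then produces, for a.e.\ $s$, a unique $M_s$ with the explicit representation through the fundamental matrix \eqref{eq:LinearSDEFundamental} (the excluded null set of $s$ being irrelevant to an $L^2([0,T])$-valued object). Since that matrix does not depend on $s$, the formula simultaneously gives joint measurability of $(s,t,\omega)\mapsto M_s(t)(\omega)$, so $s\mapsto M_s(\cdot)$ is a genuine $L^2([0,T])$-valued process.

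The substance of the theorem is the bound $\bE[(\sup_t\int_0^T|M_s(t)|^2\,ds)^{p/2}]<\infty$, which I would obtain by an energy estimate. Setting $\phi(t)=\int_0^t|M_s(t)|^2\,ds$ (each slice $M_s$ vanishing for $t<s$), applying It\^o to $|M_s(t)|^2$, integrating in $s$, and accounting for the activation of each slice at $t=s$ --- which contributes the source $\int_0^t|\sigma(u,X(u))|^2\,du$ --- gives a relation of the shape
\begin{align*}
\phi(t)=\int_0^t|\sigma(u,X(u))|^2\,du+\int_0^t\!\!\int_0^u\Big[2\langle M_s,U(s,u)\rangle+2\langle M_s,B M_s\rangle+|V(s,u)+\Sigma M_s|^2\Big]\,ds\,du+\cM(t),
\end{align*}
with $\cM$ a local martingale. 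The one-sided bound controls $\langle M_s,B M_s\rangle\le L|M_s|^2$ and $|\Sigma|\le L$ controls $|V(s,u)+\Sigma M_s|^2\le 2|V(s,u)|^2+2L^2|M_s|^2$; the delicate term $\int_0^u\langle M_s,U(s,u)\rangle\,ds$ I would bound by Cauchy--Schwarz in $s$ as $\phi(u)^{1/2}(\int_0^u|U(s,u)|^2\,ds)^{1/2}$ and then split by Young's inequality, so the $U$-data enter precisely as $\int_0^T(\int_0^T|U(s,u)|^2\,ds)^{1/2}\,du$, matching Assumption \ref{Assumption:2}(iii) without any Fubini swap. This reduces everything to $\phi(t)\le A(t)+C\int_0^t\phi(u)\,du+\cM(t)$, where $A(T)$ is built from $\int_0^T|\sigma(u,X(u))|^2\,du$ and the $U$- and $V$-data of Assumption \ref{Assumption:2}(iii). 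A BDG bound for $\cM$ (after a stochastic Fubini rewriting it as a single stochastic integral) absorbs its contribution into $\sup_t\phi$ and $A$, and a Gr\"onwall argument on $\bE[\sup_t\phi(t)^{p/2}]$ closes the estimate, with $\bE[A(T)^{p/2}]<\infty$ following from the integrability of $\sigma(\cdot,0)$, $X\in\cS^p$, and Assumption \ref{Assumption:2}(iii).

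The main obstacle is exactly this combined energy estimate. One must (i) handle the non-standard ``initial data turning on at $t=s$'' structure so the source reduces to the manageable $\int_0^T|\sigma(u,X(u))|^2\,du$ rather than a per-slice initial term, and (ii) arrange every Cauchy--Schwarz/Young split so that the data norms produced are \emph{exactly} those appearing in Assumption \ref{Assumption:2}(iii); any cruder estimate would demand interchanging expectation with the $ds$-integral, which the sharp integrability of the coefficients (Remark \ref{rem:HsarpCondsFubini}) forbids. The BDG step for $\cM$, where the $\sup_t$ and the $ds$-integral interact with the stochastic integral, is the other place where care is required.
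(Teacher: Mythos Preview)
Your energy estimate is structurally the same as the paper's: apply It\^o to $|M_s(t)|^2$, integrate in $s$ (handling the ``activation'' at $t=s$ as the source $\int_0^t|\sigma(u,X(u))|^2\,du$), pass to the $p/2$-power, and arrange every split so the data appear exactly in the form of Assumption~\ref{Assumption:2}(iii). The paper carries out precisely this computation, and your identification of the Fubini obstruction and of the need for Cauchy--Schwarz \emph{in $s$} on the $U$-term is exactly right.

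Where you differ from the paper is in the construction of the solution. You build $M_s$ slice-by-slice via the explicit fundamental-matrix formula and then run the energy estimate on $M$ itself. The paper instead takes a Galerkin route: it fixes an orthonormal basis $(e_k)$ of $L^2([0,T];\bR^m)$, solves for each $k$ the finite-dimensional linear SDE with forcing $\int_0^r U(u,r)e_k(u)\,du$ and $\int_0^r V(u,r)e_k(u)\,du$ (for which Theorem~\ref{theorem:ExistenceUniquenessMaoLinearSDE} \emph{does} apply, by Cauchy--Schwarz and Assumption~\ref{Assumption:2}(iii)), and then runs the identical energy estimate on the differences $M_{(n)}-M_{(m)}$ to obtain a Cauchy sequence in $\cS^p(L^2([0,T]))$. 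Uniqueness is then handled by a short duality argument. Your approach is more direct and arguably cleaner; the paper's buys a rigorous construction of the $L^2$-valued limit without having to argue joint measurability by hand.

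One point to correct: your appeal to Theorem~\ref{theorem:ExistenceUniquenessMaoLinearSDE} per slice is not justified as stated. That theorem requires $\bE\big[(\int_s^T|U(s,r,\omega)|\,dr)^p\big]<\infty$ for the fixed slice, and Assumption~\ref{Assumption:2}(iii) does not give this; Minkowski's integral inequality only yields $\int_0^T(\int_s^T|U(s,r)|\,dr)^2\,ds<\infty$ a.s., hence a.e.-$s$ pathwise finiteness, not the per-slice $L^p$ bound. This is easy to bypass---define $M_s$ directly by the explicit formula using the common fundamental matrix $\Psi$, as you already suggest---but you should not invoke the $\cS^p$ conclusion of Theorem~\ref{theorem:ExistenceUniquenessMaoLinearSDE} at the slice level. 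You also do not address uniqueness, which the paper handles separately; the same duality/linearity argument works in your setup.
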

Observe that Equation \eqref{eq:SDEMallCandidate} is linear in $M$, so the sharpness of the integrability is determined by the integrability of $U$, $V$ and $\sigma$ (given the assumed behavior of $\nabla_x b$ and $\nabla_x \sigma$). In the trivial case where $U = V = 0$ and $\sigma=1$ then $M$ has finite moments of all orders.

\begin{proof}[Proof of Theorem \ref{theo:ExistenceUniqueness for Candidate DX}]
For brevity, $t\in[0,T]$ and we omit the explicit $\omega$ dependency throughout.  

Equation \eqref{eq:SDEMallCandidate} is an infinite dimensional SDE. We see this when we think of the Malliavin Derivative as being an $L^2([0,T])$ valued stochastic process. Therefore, we need to extend results from Section 2 to infinite dimensional spaces. Let $e_n$ be an orthonormal basis of the space $L^2([0,T]; \bR^{m})$. This is a separable Hilbert space, so without loss of generality we can say the orthonormal basis is countably infinite.  Let $V_n$ be the linear span of the set $\{ e_1, ..., e_n\}$. Let $P_n : L^2([0,T]; \bR^{m}) \to V_n$ be the canonical projection operators
\begin{align*}
P_n[f](t) = \sum_{k=1}^n \langle f, e_k \rangle_{L^2([0,T]; \bR^{m})} e_k(t).
\end{align*}
Then it is clear that $\lim_{n\to \infty} \| P_n[f] - f\|_{L^2([0,T]; \bR^{ m})} = 0$.  For $k\in \bN$, consider the sequence of 1-dimensional Linear Stochastic Differential Equations

\begin{align*}
M_k (t) =& \int_0^t \sigma(u, X(u)) e_k(u) du + \int_0^t \Big( \int_0^r U(u, r) e_k(u) du\Big) dr + \int_0^t \Big( \int_0^r V(u, r) e_k(u) du \Big) dW(r)
\\
&+\int_0^t \nabla_x b(r, X(r)) M_k(r) dr + \int_0^t \nabla_x \sigma (r,  X(r)) M_k(r) dW(r).
\end{align*}
These equations are of the same form as \eqref{eq:SDEDeriv}, hence a unique solution exists for each $k$ by Theorem \ref{theorem:ExistenceUniquenessMaoLinearSDE}. Also, observe that the fundamental matrix $\Psi$ will be the same for each choice of $k\in \bN$. $\Psi$ will have the explicit solution
$$
\Psi(t) = \exp\Big( \int_0^t \nabla b(r, X(r)) dr - \frac{1}{2}\int_0^t \Big\langle \nabla \sigma\big(r, X(r)\big), \nabla \sigma\big(r, X(r)\big) \Big\rangle_{\bR^m} dr + \int_0^t \nabla \sigma(r, X(r)) dW(r) \Big)
$$
and $M_k$ has explicit solution
\begin{align*}
M_k(t) =&\Psi(t) \Bigg( \int_0^t \sigma\big(u, X(u)\big) e_k(u) du + \int_0^t \Psi(r)^{-1} \Big[ \int_0^r U\big(u, r\big) e_k(u) du 
\\
&- \Big\langle \nabla\sigma\big(r, X(r)\big) , \int_0^r V\big(u, r\big) e_k(u) du \Big\rangle_{\bR^m} \Big] dr +\int_0^t \Psi(r)^{-1} \int_0^r V\big(u, r\big) e_k(u) du dW(r) \Bigg). 
\end{align*}

Next define for $0\leq s,t\leq T$, $n\in\bN$ the process
\begin{align*}
M_{(n), s}(t) = \sum_{k=1}^n M_k(t) \otimes e_k(s) \1_{[0,t)} (s). 
\end{align*}

This process makes sense as the projection space is finite dimensional so we can rewrite it in a finite dimensional vector form. The solution exists in the space $\cS^p(L^2([0,T]; \bR^{d\times m}))$ and has the explicit solution
\begin{align*}
M_{(n), s}(t) =& \sum_{k = 1}^n \Psi(t) \Bigg( \int_0^t \sigma\big(u, X(u)\big) e_k(u) du + \int_0^t \Psi(r)^{-1} \Big[ \int_0^r U\big(u, r\big) e_k(u) du 
\\
&- \Big\langle \nabla\sigma\big(r, X(r)\big) , \int_0^r V\big(u, r\big) e_k(u) du \Big\rangle_{\bR^m} \Big] dr 
\\
&+\int_0^t \Psi(r)^{-1} \Big[\int_0^r V\big(u, r\big) e_k(u) du \Big] dW(r) \Bigg) \otimes e_k(s)  \1_{[0,t)} (s),
\\
=& \Psi(t)\Bigg( P_n\Big[ \sigma(\cdot, X(\cdot))\Big] (s) + \int_s^t  \Psi(r)^{-1} \Big( P_n\Big[ U(\cdot, r)\Big](s)
\\
&- \Big\langle \nabla\sigma\big(r, X(r)\big), P_n\Big[V\big(\cdot, r\big)\Big](s) \Big\rangle_{\bR^m} \Big) dr + \int_s^t \Psi(r)^{-1} P_n\Big[ V\big(\cdot, r\big) \Big](s) dW(r) \Bigg). 
\end{align*}

This process satisfies the SDE
\begin{align*}
M_{(n), s}(t) =& P_n\Big[ \sigma(\cdot, X(\cdot)) \Big](s) + \int_s^t P_n\Big[ U(\cdot, r) \Big](s) dr + \int_s^t P_n\Big[ V(\cdot, r) \Big](s) dW(r)
\\
&+\int_s^t \nabla_x b(r, X(r)) M_{(n), s}(r) dr + \int_s^t \nabla_x \sigma(r, X(r)) M_{(n), s} (r) dW(r).
\end{align*}

We require a norm on the space of $L^2$-valued matrices. Let $a^{(i, j)}\in L^2([0,T])$ and for $A(u)=(a^{(i, j)}(u))_{i\in\{1, ..., d\}, j\in\{1, ..., m\}}$, define

\begin{align}
\label{eq:Matrix-norm-of-convinience}
\|A_{\cdot}\| = \Big( \sum_{i=1}^d \sum_{j=1}^m \int_0^T |a^{(i, j)}(u)|^2 du \Big)^{1/2} = \Big( \int_0^T |A(u)|^2 du\Big)^{1/2}.
\end{align}

By It\^o's formula, we have
\begin{align*}
\Big|&M_{(n), s}(t) - M_{(m), s}(t)\Big|^2= \sum_{i=1}^d \sum_{k=1}^{m} \Big|M_{(n), s}^{(i, k)}(t) - M_{(m), s}^{(i, k)}(t)\Big|^2, 
\\
&=\sum_{i, k}\Big|(P_n - P_m)\Big[ \sigma(\cdot, X(\cdot)) \Big]^{(i, k)}(s) \Big|^2
\\
&+ 2\sum_{i, k}\int_s^t \Big(M_{(n), s}^{(i, k)}(r) - M_{(m), s}^{(i, k)}(r)\Big)\cdot (P_n - P_m)\Big[ U(\cdot, r) \Big]^{(i, k)}(s) dr 
\\
&+ 2\sum_{i, j, k}\int_s^t \Big(M_{(n), s}^{(i, k)}(r) - M_{(m), s}^{(i, k)}(r)\Big)\cdot (P_n - P_m)\Big[ V(\cdot, r) \Big]^{(i, j, k)}(s) dW^{(j)}(r) 
\\
&+ 2\sum_{i, k}\int_s^t \Big(M_{(n), s}^{(i, k)}(r) - M_{(m), s}^{(i, k)}(r)\Big)\cdot\Big\langle \nabla_x b^{(i)}(r, X(r)), M_{(n), s}^{(\cdot, k)}(r) - M_{(m), s}^{(\cdot, k)}(r) \Big\rangle dr 
\\
&+ 2\sum_{i, j, k}\int_s^t \Big(M_{(n), s}^{(i, k)}(r) - M_{(m), s}^{(i, k)}(r)\Big)\cdot\Big\langle \nabla_x \sigma^{(i ,j)} (r, X(r)), M_{(n), s}^{(\cdot, k)} (r) - M_{(m), s}^{(\cdot, k)} (r)\Big\rangle dW^{(j)}(r)
\\
&+ \sum_{i, j, k}\int_s^t \Big| (P_n - P_m)\Big[ V(\cdot, r) \Big]^{(i, j, k)}(s) + \Big\langle \nabla_x \sigma^{(i, j)}(r, X(r)), M_{(n), s}^{(\cdot, k)} (r) - M_{(m), s}^{(\cdot, k)} (r) \Big|^2 dr.
\end{align*}

Denote $N_s(t) = M_{(n), s}(t) - M_{(m), s}(t)$ and $(P_n - P_m) = Q$ for brevity. Integrating over $s$ and since every term is positive, we can change the order of integration to obtain
\begin{align*}
\int_0^t \Big|&N_s^{(i, k)}(t)\Big|^2ds = \int_0^t \Big|Q\big[ \sigma(\cdot, \omega, X(\cdot)) \big]^{(i, k)}(s) \Big|^2 ds
\\
&+ 2\int_0^t\int_0^r N_s^{(i, k)}(r) \cdot \Big[Q\big[ U(\cdot, r) \big]^{(i, k)}(s) + \Big\langle \nabla_x b^{(i)}(r, X(r)), N_s^{(\cdot, k)}(r)\Big\rangle \Big] dsdr 
\\
&+ 2\sum_{j}\int_0^t\int_0^r N_s^{(i, k)}(r) \cdot \Big[Q\big[ V(\cdot, r) \big]^{(i, j, k)}(s) + \Big\langle \nabla_x \sigma^{(i, j)}(r, X(r)), N_s^{(\cdot, k)}(r)\Big\rangle \Big] ds dW^{(j)}(r) 
\\
&+ \sum_{j}\int_0^t\int_0^r \Big|Q\big[ V(\cdot, r) \big]^{(i, j, k)}(s) + \Big\langle \nabla_x \sigma^{(i, j)}(r, X(r)), N_s^{(\cdot, k)}(r)\Big\rangle \Big|^2 dsdr. 
\end{align*}

Next, we use It\^o's formula $g(x) = \big(\sum_i x^{(i)} \big)^{{p}/{2}}$ to get

\begin{align}
\nonumber
\| &N_\cdot(t) \|^{p} = \Big( \sum_{i, k} \int_0^t | N_s^{(i, k)}(t)|^2 ds \Big)^{\tfrac{p}{2}}
= \Big( \int_0^t \Big|Q\big[ \sigma(\cdot, \omega, X(\cdot)) \big](s) \Big|^2 ds \Big)^{\tfrac{p}{2}}
\\
\label{eq:MalliavinSequence1}
&+p\int_0^t \| N_\cdot(r) \|^{p-2} \Big( \sum_{i, k}\int_0^r N_s^{(i, k)}(r) \Big[ Q\big[ U(\cdot, r) \big]^{(i, k)}(s) + \big\langle \nabla_x b^{(i)}\big(r, X(r)\big),  N_s^{(\cdot, k)}(r)\Big\rangle ds\Big) dr 
\\
\label{eq:MalliavinSequence2}
&+\frac{p}{2}\int_0^t \| N_\cdot(r) \|^{p-2} \sum_{i, j, k} \int_0^r \Big|Q\big[ V(\cdot, r) \big]^{i, j, k}(s) + \Big\langle\nabla_x \sigma^{(i, j)}(r, X(r)), N_s^{(\cdot, k)}(r)\Big\rangle \Big|^2 ds dr 
\\
\label{eq:MalliavinSequence3}
&+p\int_0^t \| N_\cdot(r) \|^{p-2} \sum_{i, j, k} \int_0^r N_s^{(i, k)}(r) \Big( Q\big[ V(\cdot, r) \big]^{(i, j, k)}(s) + \Big\langle \nabla_x \sigma^{(i, j)}\big(r, X(r)\big), N_s^{(\cdot, k)}(r) \Big) ds dW(r) 
\\
\nonumber
\label{eq:MalliavinSequence4}
&
+p(p-2)\int_0^t  \| N_\cdot(r) \|^{p-4} \sum_{i, j, k} \Big( \int_0^r N_s^{(i, k)}(r)  \Big( Q\big[ V(\cdot, r) \big]^{(i, j, k)}(s)
\\
& \hspace{8cm}
 + \Big\langle \nabla_x \sigma^{(i, j)}\big(r, X(r)\big), N_s^{(\cdot, k)}(r) \Big) ds\Big)^2 dr . 
\end{align}

We take a supremum over $t\in[0,T]$ then expectations to show that $\bE[ \| N \|_\infty^2]< \varepsilon$ for $n, m\in \bN$ large enough. Let $a\in \bN$ be an integer which we will choose later.

Firstly, 
\begin{align}
\label{eq:MalliavinSequence1.1}
\eqref{eq:MalliavinSequence1} \leq p&\bE\Big[ \int_0^T \| N_\cdot(r)\|^{p-2} \Big( \sum_{i, k} \int_0^r N_s^{(i, k)} (r) Q\big[ U(\cdot, r)\big]^{(i, k)}(s) ds\Big) dr\Big]
\\
\label{eq:MalliavinSequence1.2}
&+p\bE\Big[ \int_0^T \| N_\cdot(r)\|^{p-2}\Big( \sum_{i, k} \int_0^r N_s^{(i, k)}(r) \Big\langle \nabla_x b^{(i)}\big(r, X(r)\big),  N_s^{(\cdot, k)}(r)\Big\rangle ds\Big) dr\Big]. 
\end{align}
Now we deal with \eqref{eq:MalliavinSequence1.1} using H\"older inequality, the norm \eqref{eq:Matrix-norm-of-convinience}, then dominate via the supremum norm and move the term outside the integral to merge it with the outer integrand term
\begin{align*}
\eqref{eq:MalliavinSequence1.1} \leq& p\bE\Big[ \|N_\cdot\|_\infty^{p-1} \int_0^T \Big( \sum_{i, k}\int_0^r | Q[U(\cdot, r)]^{(i, k)}(s) |^2 ds \Big)^{\tfrac{1}{2}} dr \Big]
\\
\leq& \frac{\bE\Big[ \| N_\cdot \|_\infty^p\Big]}{a} + [a(p-1)]^{p-1} \bE\Big[ \Big( \int_0^T \| Q[U(\cdot, r)](s)\|_2 ds \Big)^p \Big],
\end{align*}
and 
$$
\eqref{eq:MalliavinSequence1.2} \leq pL \int_0^T \bE\Big[ \|N_\cdot \|_{\infty, r}^p\Big] dr.
$$
using the Monotonicity property of $b$. Secondly, 
\begin{align*}
\eqref{eq:MalliavinSequence2} \leq & p \bE \Big[ \int_0^T \| N_{\cdot}(r) \|^{p-2} \Big( \sum_{i, j, k} \int_0^r |Q[V(\cdot, r)]^{(i, j, k)}|^2 ds \Big) dr\Big] 
\\ 
&
+ p \bE \Big[ \int_0^T \|N_{\cdot}(r) \|^{p-2} \Big( \sum_{i, j, k} \int_0^r \Big\langle \nabla_x \sigma^{(i, j)}(r, X(r)), N^{(\cdot, k)}_s(r) \Big\rangle^2ds \Big) dr\Big] 
\\
\leq& \frac{\bE\Big[ \| N_{\cdot} \|_{\infty}^{p}\Big]}{a} + 2[a(p-2)]^{\tfrac{p-2}{2}} \bE\Big[ \Big( \int_0^T \| Q[V(\cdot, r)](\cdot)\|_2^2 dr \Big)^{\tfrac{p}{2}} \Big]
+  pL^2 \int_0^t \bE \Big[ \| N_{\cdot} \|_{\infty, r}^{p}\Big]dr,
\end{align*}
using the boundedness of $\nabla \sigma$. Thirdly, using the Burkholder-Davis-Gundy Inequality
\begin{align}
\nonumber
\eqref{eq:MalliavinSequence3}\leq 
& pC_1\bE\Big[ \Big( \int_0^{T} \| N_{\cdot}(r) \|^{2p-4} \sum_{j} \Big( \sum_{i, k} \int_0^r N^{(i, k)}_s(r) \Big[ Q[V(\cdot, r)]^{(i, j, k)}(s)
\\
\nonumber
&+ \Big\langle \nabla_x \sigma^{(i, j)}(r, X(r)), N^{(\cdot, k)}_s(r) \Big\rangle\Big] ds\Big)^2 dr\Big)^{\tfrac{1}{2}}\Big] 
\\
\label{eq:MalliavinSequence3.1}
\leq&\sqrt{2}pC_1 \bE\Big[ \| N_{\cdot} \|_{\infty}^{p-2} \Big( \int_0^T \Big[ \sum_{i, j, k} \int_0^r |N^{(i, k)}_s(r)| \cdot | Q[V(\cdot, r)]^{(i, j, k)}(s) |ds \Big]^2 dr \Big)^{\tfrac{1}{2}}\Big] 
\\
\label{eq:MalliavinSequence3.2}
&+\sqrt{2}pC_1 \bE\Big[ \| N_{\cdot} \|_{\infty}^{p-2} \Big( \int_0^T \Big[ \sum_{i, j, k} \int_0^r |N^{(i, k)}_s(r)| \cdot \Big| \big\langle \nabla_x \sigma^{(i, j)}(r, X(r)), N^{(\cdot, k)}_s(r) \big\rangle\Big|ds \Big]^2 dr \Big)^{\tfrac{1}{2}}\Big] . 
\end{align}

As before, we have
\begin{align*}
\eqref{eq:MalliavinSequence3.1}\leq& \sqrt{2} pC_1 \bE \Big[ \| N_{\cdot}\|_{\infty}^{p-1} \Big( \int_0^T \|Q[V(\cdot, r)](\cdot)\|_2^2dr\Big)^{\tfrac{1}{2}}\Big] 
\\
\leq& \frac{\bE\Big[ \| N_{\cdot} \|_{\infty}^{p} \Big]}{a} + (\sqrt{2} C_1)^p[a(p-1)]^{p-1} \bE \Big[ \Big( \int_0^T\|Q[V(\cdot, r)](\cdot)\|_2^2 dr\Big)^{\tfrac{p}{2}}\Big],
\end{align*}
and put together
\begin{align*}
\eqref{eq:MalliavinSequence3.2}\leq& \sqrt{2} pC_1 L \bE\Big[ \Big( \int_0^T \| N_{\cdot}(r)\|^{2p} dr\Big)^{\tfrac{1}{2}}\Big] 
\leq \sqrt{2} pC_1 L \bE\Big[ \| N_\cdot\|_{\infty}^{\tfrac{p}{2}} \Big( \int_0^T \| N_\cdot(r)\|^{p} dr\Big)^{\tfrac{1}{2}}\Big] 
\\
\leq& \frac{\bE\Big[ \| N_{\cdot}\|_{\infty}^{p} \Big]}{a} + \frac{(pC_1L)^2 a}{2}\int_0^T \bE\Big[ \| N_{\cdot}\|_{\infty, r}^p\Big] dr. 
\end{align*}

Finally, for 
\begin{align}
\label{eq:MalliavinSequence4.1}
\eqref{eq:MalliavinSequence4}\leq& 2p(p-2) \bE\Big[ \int_0^T \|N_{\cdot}(r)\|^{p-4} \Big( \sum_{i, j, k} \int_0^r |N^{(i, k)}_s(r)|\cdot | Q[V(\cdot, r)]^{(i, j, k)}(s) |ds \Big)^2dr \Big]
\\
\label{eq:MalliavinSequence4.2}
&+2p(p-2) \bE\Big[ \int_0^T \| N_{\cdot}(r)\|^{p-4} \Big( \sum_{i, j, k} \int_0^r |N^{(i, k)}_s(r)|\cdot \Big|\Big\langle \nabla_x \sigma^{(i, j)}(r, X(r)), N^{(\cdot, k)}_s(r) \Big\rangle\Big|ds \Big)^2dr \Big].
\end{align}

Repeating the same ideas as before, we get
\begin{align*}
\eqref{eq:MalliavinSequence4.1} \leq& 2p(p-2) \bE\Big[ \|N_{\cdot}\|_{\infty}^{p-2} \cdot \int_0^T\| Q[V(\cdot, r)](\cdot)\|_2^2 dr\Big] 
\\
\leq&\frac{\bE\Big[ \| N_{\cdot}\|_{\infty}^{p} \Big]}{a} + 2^{\tfrac{p+2}{2}}\cdot a^{\tfrac{p-2}{2}}\cdot (p-2)^{p-1}\bE\Big[ \Big( \int_0^T \| Q[V(\cdot, r)](\cdot)\|_2^2 dr\Big)^{\tfrac{p}{2}}\Big].
\end{align*}
and $\eqref{eq:MalliavinSequence4.2} \leq 2p(p-2)L^2 \bE\big[ \int_0^T \| N_{\cdot}\|_{\infty, r}^{p} dr \big]$. Therefore, choosing $a=6$ we conclude
\begin{align*}
\frac{\bE\Big[ \| N_{\cdot}\|_{\infty}^p \Big]}{6} 
\leq& \Bigg( \bE\Big[ \| Q[\sigma(\cdot, X(\cdot))](\cdot)\|_2^{p} \Big] 
+ \widetilde{C}_1 \bE\Big[ \Big( \int_0^T \|Q[U(\cdot, r)](\cdot)\|_2dr\Big)^{p} \Big] 
\\
&+\widetilde{C}_2 \bE\Big[ \Big( \int_0^T \|Q[V(\cdot, r)](\cdot)\|_2^2dr\Big)^{\tfrac{p}{2}} \Big] \Bigg) + \widetilde{C}_3 \int_0^T \bE\Big[ \| N_{\cdot}\|_{\infty, r}^p \Big] dr.
\end{align*}
By applying Gr\"onwall's inequality we conclude that 
\begin{align*}
\bE\Big[ \sup_{t\in[0,T]} \| &M_{(n), \cdot} - M_{(m), \cdot} \|^p \Big]\lesssim \Bigg( \bE\Big[ \| (P_n - P_m)\big[\sigma(\cdot, X(\cdot))\big]\|_2^{p} \Big] 
\\
&+ \bE\Big[ \Big( \int_0^T \|(P_n - P_m)[U(\cdot, r)]\|_2 dr\Big)^{p} \Big] +\bE\Big[ \Big( \int_0^T \|(P_n - P_m)[V(\cdot, r)]\|_2^2dr\Big)^{\tfrac{p}{2}} \Big] \Bigg). 
\end{align*}

Given that, by Assumption we already have
$$
\bE\Big[ \| \sigma(\cdot, X(\cdot))\|_2^{p} \Big], \quad \bE\Big[ \Big( \int_0^T \|U(\cdot, r)\|_2 dr\Big)^{p} \Big], \quad \bE\Big[ \Big( \int_0^T \|V(\cdot, r)\|_2^2dr\Big)^{\tfrac{p}{2}} \Big] < \infty, 
$$
we are able to apply the Dominated Convergence Theorem to swap the order of limits and integrals. Taking a limit as $m, n$ go to infinity lets us conclude that the sequence $M_{(n)}$ is Cauchy in $\cS^p(L^2([0,T]; \bR^{d\times m}))$. This is a Banach space, so a limit must exist which we denote by $M'$,
\begin{align*}
M'_s(t)=& \lim_{n\to\infty} \Psi(t)\Bigg( P_n\Big[ \sigma(\cdot, X(\cdot))\Big] (s) + \int_s^t  \Psi(r)^{-1} \Big( P_n\Big[ U(\cdot, r)\Big](s)
\\
&- \Big\langle \nabla\sigma\big(r, X(r)\big), P_n\Big[V\big(\cdot, r\big)\Big](s) \Big\rangle_{\bR^m} \Big) dr + \int_s^t \Psi(r)^{-1} P_n\Big[ V\big(\cdot, r\big) \Big](s) dW(r) \Bigg).
\end{align*}

Now let $g\in L^2([0,T]; \bR^m)$ be chosen arbitrarily. Then we define ${M^g}' (\cdot)$ as
\begin{align*}
{M^g}'(t)=& \int_0^t M'_s(t)g(s)ds
\\=& \Psi(t)\Bigg( \int_0^t \sigma(s, X(s))g(s) ds + \int_s^t  \Psi(r)^{-1} \Big( \int_0^t U(s, r) g(s)ds
\\
&- \Big\langle \nabla\sigma\big(r, X(r)\big), \int_0^t V\big(s, r\big) g(s)ds\Big\rangle_{\bR^m} \Big) dr + \int_s^t \Psi(r)^{-1} \int_0^t V\big(s, r\big)g(s)ds dW(r) \Bigg). 
\end{align*}
In order to move the limit inside the different integrals, we use the Dominated Convergence Theorem again. 

Given an explicit solution, we know ${M^g}'$ will satisfy the SDE
\begin{align*}
{M^g}' (t) =& \int_0^t \sigma(s, X(s)) g(s) ds + \int_0^t \Big( \int_0^r U(s, r) g(s) ds\Big) dr + \int_0^t \Big( \int_0^r V(s, r) g(s) ds \Big) dW(r)
\\
&+\int_0^t \nabla_x b(r, X(r)) {M^g}'(r) dr + \int_0^t \nabla_x \sigma (r, X(r)) {M^g}'(r) dW(r).
\end{align*}
Therefore by a duality argument 
\begin{align*}
M'_s (t) =& \sigma(s, X(s)) + \int_0^t U(s, r) dr + \int_0^t V(s, r) dW(r)
\\
&+\int_0^t \nabla_x b(r, X(r)) M'_s(r) dr + \int_0^t \nabla_x \sigma (r, X(r)) M'_s(r) dW(r),
\end{align*}
which is the same SDE as \eqref{eq:SDEMallCandidate}. 

Next we prove uniqueness. Suppose that there are two solutions to the SDE \eqref{eq:SDEMallCandidate}, $M$ and $M'$. Denote $M - M' = \tilde{N}$. Then $\tilde{N}$ will satisfy the linear SDE 

\begin{align*}
d\tilde{N}_s(t) = \nabla_x b(t, X(t)) \tilde{N}_s(t) dt +  \nabla_x \sigma (t, X(t)) \tilde{N}_s(t) dW(t), \quad \tilde{N}_s(s) = 0. 
\end{align*}

Let $g\in L^2([0,T]; \bR^m)$ be chosen arbitrarily. Define $\tilde{N}^g(t) = \int_0^t \tilde{N}_s(t) g(s) ds$.  
Clearly, this linear SDE will almost surely be equal to 0 independently of the choice of $g$. Hence $\tilde{N}$ must also be equal to 0. So $M = M'$ and we have proved uniqueness. 

\end{proof}

\subsubsection*{Ray Absolute Continuity of $X$}

We show that the expectation of $\| \big( {X(\cdot)(\omega+\varepsilon h) - X(\cdot)(\omega)}\big)/{\varepsilon} \|_\infty^2$ has a bound uniform in $\varepsilon$. This relies on having finite $p^{th}$ moments of the random variable $\|X\|_{\infty}$ for $p>2$.  

The case $p=2$ is problematic. It is not the case that $Z\in\bD^{1, 2}$ implies that $\big(Z(\omega+\varepsilon h) - Z(\omega)\big)/{\varepsilon}$ converges in mean square as $\varepsilon \searrow 0$, see Remark \ref{remark:SquareIntegrabilityProblems} and \cite{imkeller2016note} for in-depth discussion. If we were dealing with the sharp case where the solution of the SDE exists in $\cS^2$, it would be unreasonable to expect the Malliavin Derivatives of $b$ and $\sigma$ to satisfy Assumption \ref{Assumption:2}(iv), which is necessary for the following Proposition. The power $p$ must be greater that $2$, as opposed to $1$, because the monotonicity condition lends itself to studying the moments of the SDE for moments of greater than or equal to $2$ but is a hindrance for the moments of order less than $2$ (computations may involve local times).

\begin{proposition}
\label{proposition:MallCalcRayAbsoluteCont}
Let $X$ be solution to the SDE \eqref{eq:SDE} under Assumption \ref{Assumption:2}. We have
\begin{equation}
\label{eq:RACviaMean}
\bE\Big[\, \Big\| \frac{X(\cdot)(\omega+\varepsilon h) - X(\cdot)(\omega)}{\varepsilon}\Big\|_\infty^2\Big] =O(1) \quad \textrm{as }\varepsilon\to 0.
\end{equation} 
\end{proposition}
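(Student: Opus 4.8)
The plan is to realize the Cameron--Martin perturbation $\omega\mapsto\omega+\varepsilon h$ as a shift of the driving Brownian motion, form the difference quotient, and close an It\^o/Gr\"onwall estimate in which the one-sided Lipschitz condition absorbs the unbounded drift without any truncation. Since $h\in H$ satisfies $h(t)=\int_0^t\dot h(s)\,ds$ and $\theta$ is $\cF_0$-measurable (hence unaffected by the shift), writing $Y^\varepsilon(\cdot):=X(\cdot)(\omega+\varepsilon h)$ the process $Y^\varepsilon$ solves \eqref{eq:SDE} with coefficients evaluated at $\omega+\varepsilon h$ and an extra Cameron--Martin drift:
\begin{align*}
Y^\varepsilon(t)=\theta+\int_0^t\Big[b(s,\omega+\varepsilon h,Y^\varepsilon(s))+\varepsilon\,\sigma(s,\omega+\varepsilon h,Y^\varepsilon(s))\dot h(s)\Big]ds+\int_0^t\sigma(s,\omega+\varepsilon h,Y^\varepsilon(s))\,dW(s).
\end{align*}
Setting $Z^\varepsilon:=(Y^\varepsilon-X)/\varepsilon$ (so $Z^\varepsilon(0)=0$) and subtracting \eqref{eq:SDE}, I would split each coefficient increment into a spatial part and an $\omega$-part, e.g.
\begin{align*}
b(s,\omega+\varepsilon h,Y^\varepsilon(s))-b(s,\omega,X(s))=\big[b(s,\omega+\varepsilon h,Y^\varepsilon(s))-b(s,\omega+\varepsilon h,X(s))\big]+\big[b(s,\omega+\varepsilon h,X(s))-b(s,\omega,X(s))\big],
\end{align*}
and likewise for $\sigma$.

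Next I would apply It\^o's formula to $|Z^\varepsilon(t)|^2$, take a supremum in $t$ and then expectations. The decisive point is that the spatial drift increment pairs with $Z^\varepsilon$ exactly as the monotonicity condition requires: because $Z^\varepsilon(s)=(Y^\varepsilon(s)-X(s))/\varepsilon$,
\begin{align*}
\Big\langle Z^\varepsilon(s),\frac{b(s,\omega+\varepsilon h,Y^\varepsilon(s))-b(s,\omega+\varepsilon h,X(s))}{\varepsilon}\Big\rangle=\frac{1}{\varepsilon^2}\big\langle Y^\varepsilon(s)-X(s),\,b(s,\omega+\varepsilon h,Y^\varepsilon(s))-b(s,\omega+\varepsilon h,X(s))\big\rangle\leq L\,|Z^\varepsilon(s)|^2,
\end{align*}
so the super-linear drift is converted directly into a Gr\"onwall term. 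Similarly the spatial increment of $\sigma$ is bounded by $L|Z^\varepsilon(s)|$ via the Lipschitz assumption, and the martingale part is handled by Burkholder--Davis--Gundy. This leaves three forcing terms: the $\omega$-increments of $b$ and of $\sigma$, and the Cameron--Martin drift $\int_0^t\sigma(s,\omega+\varepsilon h,Y^\varepsilon(s))\dot h(s)\,ds$.

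The two $\omega$-increments are precisely the quantities controlled by Assumption \ref{Assumption:2}(iv): up to errors vanishing in the $L^2$-norms stated there they are given by the processes $U$ and $V$ integrated against $\dot h$, whose norms are finite by Assumption \ref{Assumption:2}(iii); hence these terms are $O(1)$ as $\varepsilon\to0$. For the extra drift I would use $|\sigma(s,\omega+\varepsilon h,Y^\varepsilon(s))|\leq|\sigma(s,\omega+\varepsilon h,0)|+L|Y^\varepsilon(s)|$ and Cauchy--Schwarz in $s$. A Gr\"onwall argument then gives $\bE[\|Z^\varepsilon\|_\infty^2]\leq C_\varepsilon$, and it remains to bound $C_\varepsilon$ uniformly as $\varepsilon\to0$.

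The hard part is exactly this uniformity in $\varepsilon$, since every forcing term is evaluated along the shifted path $\omega+\varepsilon h$. I would transport these back to $\omega$ through the Cameron--Martin formula (Proposition \ref{proposition:CamMarForm1}), $\bE[F(\omega+\varepsilon h)]=\bE[F(\omega)\,\cE(\varepsilon\dot h)(T)]$, and then apply H\"older's inequality, paying for the Dol\'eans--Dade weight with a higher moment of $X$ and of $\sigma(\cdot,\omega,0)$. This is where the hypothesis $p>2$ enters: since $\cE(\varepsilon\dot h)(T)\in\cS^q$ for every $q$ with norm bounded uniformly over $\varepsilon\leq1$ (Proposition \ref{proposition:CamMarForm1}), the $L^p$ bound of Theorem \ref{theorem:ExistenceUniquenessMao} leaves enough integrability to absorb the weight and still close the estimate in $L^2$, yielding the claimed $O(1)$ bound.
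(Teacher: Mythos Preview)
Your proposal is correct and follows essentially the same route as the paper: write the SDE for the difference quotient, split each coefficient increment into a spatial part and an $\omega$-part, apply It\^o's formula to $|Z^\varepsilon|^2$, handle the spatial drift by monotonicity, the spatial diffusion by the Lipschitz bound and BDG, the $\omega$-increments by Assumption~\ref{Assumption:2}(iii)--(iv), and close with Gr\"onwall. The one minor difference is how you treat the Cameron--Martin drift $\int_0^t\sigma(s,\omega+\varepsilon h,Y^\varepsilon(s))\dot h(s)\,ds$: the paper centers it at the \emph{unshifted} baseline $\sigma(s,\omega,X(s)(\omega))$ and bounds the two corrections via the Lipschitz property and Assumption~\ref{Assumption:2}, so that every forcing term already lives at $\omega$ and no change of measure is needed; you instead bound it along the shifted path and then transport back via the Cameron--Martin formula and H\"older's inequality. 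Your route also works under $p>2$, but the paper's splitting is a cleaner shortcut here and makes your last paragraph unnecessary.
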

After we have proved Stochastic G\^ateaux Differentiability (see Theorem \ref{theo:GateauxDifferentiabilitySDE}), Corollary \ref{corollary:HowToRAC} and Equation \eqref{eq:RACviaMean} will imply Ray Absolute Continuity. 

\begin{proof}
Let $t \in [0,T]$. Using  Assumption \ref{Assumption:2}, we have 
\begin{align*}
\bE\Big[& \Big( \int_0^T \Big| \frac{b(t, \omega+\varepsilon h, X(t)(\omega)) - b(t, \omega, X(t)(\omega))}{\varepsilon} \Big| dt\Big)^2 \Big] 
\\
&
\leq 
2 \bE\Bigg[ \|h\|_2^2 \Big( \int_0^T \Big(\int_0^t |U(s, t, \omega)|^2 ds\Big)^{\tfrac{1}{2}} dt\Big)^2 
\\
&\qquad + \Big( \int_0^T \Big| \frac{b(t, \omega + \varepsilon h, X(t)) - b(t, \omega, X(t))}{\varepsilon} - \int_0^t U(s, t, \omega)\dot{h}(s)ds\Big| dt\Big)^2 \Bigg] 
\leq O(1), 
\end{align*}
and
\begin{align*}
\bE\Big[& \int_0^T \Big| \frac{\sigma(t, \omega+\varepsilon h, X(t)(\omega)) - \sigma(t, \omega, X(t)(\omega))}{\varepsilon}\Big|^2 ds\Big]
\\
&
\leq 
2 \bE\Bigg[ \|h\|_2^2 \int_0^T \int_0^t |V(s, t, \omega)|^2 ds dt
\\
&\qquad + \int_0^T \Big| \frac{\sigma(t, \omega + \varepsilon h, X(t)) - \sigma(t, \omega, X(t))}{\varepsilon} - \int_0^t V(s, t, \omega)\dot{h}(s)ds\Big|^2 dt \Bigg] 
\leq O(1). 
\end{align*} 

For notational compactness let us introduce $P_\varepsilon(t)(\omega) = \big({X(t)(\omega+\varepsilon h) - X(t)(\omega)}\big)/{\varepsilon}$. We have   
\begin{align*}
P_\varepsilon(t)(\omega) 
=& \int_0^t \sigma(s, \omega, X(s)(\omega)) \dot{h}(s) ds
\\
&+ \int_0^t \Big( \sigma(s, \omega+\varepsilon h, X(s)(\omega+\varepsilon h)) - \sigma(s, \omega, X(s)(\omega)) \Big)\dot{h}(s) ds
\\
&+ \frac{1}{\varepsilon} \int_0^t \Big( b(s, \omega+\varepsilon h, X(s)(\omega+\varepsilon h)) - b(s, \omega, X(s)(\omega))\Big) ds
\\
&+ \frac{1}{\varepsilon} \int_0^t \Big( \sigma(s, \omega+\varepsilon h, X(s)(\omega+\varepsilon h)) - \sigma(s, \omega, X(s)(\omega))\Big) dW(s).
\end{align*}
Using It\^o's formula for $f(x) = x^2$ we have
\begin{align}
\label{eq:P(t),1}
\Big| &P_\varepsilon(t)(\omega) \Big|^2 = 2\int_0^t \Big\langle P_\varepsilon(s)(\omega), \sigma(s, \omega, X(s)(\omega)) \dot{h}(s)\Big\rangle ds
\\
\label{eq:P(t),2}
&+ 2\int_0^t \Big\langle P_\varepsilon(s)(\omega) ,\Big( \sigma(s, \omega+\varepsilon h, X(s)(\omega+\varepsilon h)) - \sigma(s, \omega+\varepsilon h, X(s)(\omega)) \Big)\dot{h}(s) \Big\rangle ds
\\
\label{eq:P(t),3}
&+ 2\int_0^t \Big\langle P_\varepsilon(s)(\omega) ,\Big( \sigma(s, \omega+\varepsilon h, X(s)(\omega)) - \sigma(s, \omega, X(s)(\omega)) \Big)\dot{h}(s) \Big\rangle ds
\\
\label{eq:P(t),4}
&+ 2\int_0^t \Big\langle P_\varepsilon(s)(\omega), \frac{b(s, \omega+\varepsilon h, X(s)(\omega+\varepsilon h)) - b(s, \omega+\varepsilon h, X(s)(\omega))}{\varepsilon} \Big\rangle ds
\\
\label{eq:P(t),5}
&+ 2\int_0^t \Big\langle P_\varepsilon(s)(\omega), \frac{b(s, \omega+\varepsilon h, X(s)(\omega)) - b(s, \omega, X(s)(\omega))}{\varepsilon} \Big\rangle ds
\\
\label{eq:P(t),6}
&+ 2\int_0^t \Big\langle P_\varepsilon(s)(\omega), \frac{\sigma(s, \omega+\varepsilon h, X(s)(\omega+\varepsilon h)) - \sigma(s, \omega+\varepsilon h, X(s)(\omega))}{\varepsilon} dW(s) \Big\rangle 
\\
\label{eq:P(t),7}
&+ 2\int_0^t \Big\langle P_\varepsilon(s)(\omega), \frac{\sigma(s, \omega+\varepsilon h, X(s)(\omega)) - \sigma(s, \omega, X(s)(\omega))}{\varepsilon} dW(s)\Big\rangle 
\\
\label{eq:P(t),8}
&+ \int_0^t \Big| \frac{\sigma(s, \omega+\varepsilon h, X(s)(\omega+\varepsilon h)) - \sigma(s, \omega, X(s)(\omega))}{\varepsilon} \Big|^2 ds. 
\end{align}

We take a supremum over $t$ then expectations. Let $n$ be an integer that we will choose later. By using a combination of Young's Inequality, Cauchy-Schwartz Inequality,  Burkholder-Davis-Gundy Inequality and the continuity properties from  Assumption \ref{Assumption:2} we find the following upper bounds:
\begin{align*}
\textrm{For }\eqref{eq:P(t),1} \Rightarrow 
&\bE\Big[ 2\int_0^T \Big| \Big\langle P_\varepsilon(s)(\omega), \sigma(s, \omega, X(s)(\omega)) \dot{h}(s)\Big\rangle\Big| ds\Big]
\\
\leq&\frac{\bE[ \| P_\varepsilon\|_{\infty}^2]}{n} + n\|\dot{h}\|_2^2 \bE\Big[ \int_0^T \Big| \sigma(s, \omega, X(s)(\omega))\Big|^2 ds\Big] 
\\
\leq&\frac{\bE[ \| P_\varepsilon\|_{\infty}^2]}{n} + 2n\|\dot{h}\|_2^2 \Big( L^2 \bE\Big[ \|X\|_{\infty}^2 \Big] + \bE\Big[\int_0^T \Big|\sigma(s, \omega, 0)\Big|^2 ds \Big]\Big),
\end{align*}
\begin{align*}
\textrm{For }\eqref{eq:P(t),2} \Rightarrow&\bE\Big[ 2\int_0^T \Big| \Big\langle P_\varepsilon(s)(\omega) ,\Big( \sigma(s, \omega+\varepsilon h, X(s)(\omega+\varepsilon h)) - \sigma(s, \omega+\varepsilon h, X(s)(\omega)) \Big)\dot{h}(s) \Big\rangle \Big| ds\Big]
\\
\leq& 2L \varepsilon \int_0^T \bE\Big[\| P_\varepsilon\|_{\infty, s}^2 \Big] \cdot|\dot{h}(s)| ds,
\end{align*}
\begin{align*}
\textrm{For }\eqref{eq:P(t),3} \Rightarrow&\bE\Big[ 2\int_0^T \Big| \Big\langle P_\varepsilon(s)(\omega) ,\Big( \sigma(s, \omega+\varepsilon h, X(s)(\omega)) - \sigma(s, \omega, X(s)(\omega)) \Big)\dot{h}(s) \Big\rangle\Big| ds \Big]
\\
\leq& \frac{\bE[ \| P_\varepsilon\|_{\infty}^2]}{n} + n\|\varepsilon \dot{h}\|_2^2 \bE\Big[ \int_0^T \Big|  \frac{\sigma(s, \omega+\varepsilon h, X(s)(\omega)) - \sigma(s, \omega, X(s)(\omega))}{\varepsilon}\Big|^2 ds\Big] ,
\end{align*}
\begin{align*}
\textrm{For }\eqref{eq:P(t),4} \Rightarrow& \bE\Big[ 2\int_0^T \Big| \Big\langle P_\varepsilon(s)(\omega), \frac{b(s, \omega+\varepsilon h, X(s)(\omega+\varepsilon h)) - b(s, \omega+\varepsilon h, X(s)(\omega))}{\varepsilon} \Big\rangle\Big| ds\Big]
\\
\leq& 2L \int_0^T \bE\Big[ \|P_\varepsilon\|_{\infty,s}^2\Big] ds,
\end{align*}
\begin{align*}
\textrm{For }\eqref{eq:P(t),5} \Rightarrow& \bE\Big[ 2\int_0^T \Big| \Big\langle P_\varepsilon(s)(\omega), \frac{b(s, \omega+\varepsilon h, X(s)(\omega)) - b(s, \omega, X(s)(\omega))}{\varepsilon} \Big\rangle \Big| ds\Big]
\\
\leq& \frac{\bE[ \| P_\varepsilon\|_{\infty}^2]}{n} + n\bE\Big[ \Big( \int_0^T \Big| \frac{b(s, \omega+\varepsilon h, X(s)(\omega)) - b(s, \omega, X(s)(\omega))}{\varepsilon} \Big| ds\Big)^2 \Big], 
\end{align*}
\begin{align*}
\textrm{For }\eqref{eq:P(t),6} \Rightarrow& \bE\Big[ \sup_{t\in[0,T]} 2\int_0^t \Big\langle P_\varepsilon(s), \frac{\sigma(s, \omega+\varepsilon h, X(s)(\omega+\varepsilon h)) - \sigma(s, \omega+\varepsilon h, X(s)(\omega))}{\varepsilon} dW(s) \Big\rangle \Big]
\\
\leq& 2C_1  \bE\Big[ \|P_\varepsilon\|_{\infty} \Big(\int_0^T \Big|\frac{\sigma(s, \omega+\varepsilon h, X(s)(\omega+\varepsilon h)) - \sigma(s, \omega+\varepsilon h, X(s)(\omega))}{\varepsilon} \Big|^2 ds\Big)^{1/2} \Big]
\\
\leq& \frac{\bE[ \| P_\varepsilon\|_{\infty}^2]}{n} + nC_1^2\bE\Big[\int_0^T \Big|\frac{\sigma(s, \omega+\varepsilon h, X(s)(\omega+\varepsilon h)) - \sigma(s, \omega+\varepsilon h, X(s)(\omega))}{\varepsilon} \Big|^2 ds\Big]
\\
\leq& \frac{\bE[ \| P_\varepsilon\|_{\infty}^2]}{n} + nC_1^2\int_0^T \bE\Big[\|P_\varepsilon\|_{\infty, s}^2\Big] ds,
\end{align*}
\begin{align*}
\textrm{For }\eqref{eq:P(t),7} \Rightarrow& \bE\Big[ \sup_{t\in[0,T]} 2\int_0^t \Big\langle P_\varepsilon(s), \frac{\sigma(s, \omega+\varepsilon h, X(s)(\omega)) - \sigma(s, \omega, X(s)(\omega))}{\varepsilon} dW(s)\Big\rangle \Big]
\\
\leq& \frac{\bE[ \| P_\varepsilon\|_{\infty}^2]}{n} + nC_1\bE\Big[\int_0^T \Big|\frac{\sigma(s, \omega+\varepsilon h, X(s)(\omega)) - \sigma(s, \omega, X(s)(\omega))}{\varepsilon} \Big|^2 ds\Big],
\end{align*}
\begin{align}
\nonumber
\textrm{For }\eqref{eq:P(t),8} \Rightarrow& \bE\Big[ \int_0^T \Big| \frac{\sigma(s, \omega+\varepsilon h, X(s)(\omega+\varepsilon h)) - \sigma(s, \omega, X(s)(\omega))}{\varepsilon} \Big|^2 ds\Big]
\\
\label{eq:P(t),9}
\leq& 2\bE\Big[ \int_0^T \Big| \frac{\sigma(s, \omega+\varepsilon h, X(s)(\omega+\varepsilon h)) - \sigma(s, \omega+\varepsilon h, X(s)(\omega))}{\varepsilon} \Big|^2 ds\Big]
\\
\nonumber
&+ 2\bE\Big[ \int_0^T \Big| \frac{\sigma(s, \omega+\varepsilon h, X(s)(\omega)) - \sigma(s, \omega, X(s)(\omega))}{\varepsilon} \Big|^2 ds\Big],
\end{align}
and finally that 
$\eqref{eq:P(t),9}\leq 2L^2 \int_0^T \bE\Big[\|P_\varepsilon\|_{\infty, s}^2\Big] ds$.

Combining all these inequalities and choosing $n=6$, we have  
\begin{align*}
\frac16 {\bE\Big[ \| P_\varepsilon\|_{\infty}^2\Big]} \leq \bE\Big[ \| A_\varepsilon\|_{\infty}^2\Big] + \bar{C_1} \int_0^T \bE\Big[ \| P_\varepsilon\|_{\infty, s}^2\Big] ds,
\end{align*}
where $\bE\big[ \| A_\varepsilon\|_{\infty}^2\big] = O(1)$ as $\varepsilon \to 0$. Gr\"onwall's inequality yields that $\bE[\, \| P_\varepsilon\|_{\infty}^2 ] = O(1)$ as $\varepsilon \to 0$. 
\end{proof}

\subsubsection*{Stochastic Gateaux Differentiability of $X$}
Next we prove the convergence in probability statement of Definition \ref{definition:StochasticallyGateauxDiff}. 

\begin{theorem}
\label{theo:GateauxDifferentiabilitySDE}
Let $X$ be solution to the SDE \eqref{eq:SDE} under Assumption \ref{Assumption:2} and let $h\in H$.  Then  
 we have as $\varepsilon\to 0$ 
\begin{align*}
\Big\| \frac{X(\cdot)(\omega+\varepsilon h) - X(\cdot)(\omega)}{\varepsilon} - \int_0^\cdot M_s(\cdot)(\omega) \dot{h}(s) ds \Big\|_\infty \xrightarrow{\bP} 0.
\end{align*}
Hence $X$ satisfies  Definition \ref{definition:StochasticallyGateauxDiff}, i.e.~is Stochastically G\^ateaux differentiable.
\end{theorem}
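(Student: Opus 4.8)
The plan is to show that the difference quotient $P_\varepsilon(t) := \big(X(t)(\omega+\varepsilon h) - X(t)(\omega)\big)/\varepsilon$ converges, uniformly in $t$ and in probability, to the candidate directional derivative $G(t) := \int_0^t M_s(t)\dot h(s)\,ds$, where $M$ is the process built in Theorem \ref{theo:ExistenceUniqueness for Candidate DX}. First I would determine the equation solved by $G$: multiplying \eqref{eq:SDEMallCandidate} by $\dot h(s)$, integrating in $s$ over $[0,t]$ and exchanging the order of integration via the (stochastic) Fubini theorem---justified by the integrability of $U$, $V$ and $\sigma$ in Assumption \ref{Assumption:2}(iii)---yields that $G$ solves the linear SDE
\begin{align*}
G(t) =& \int_0^t \sigma(s,X(s))\dot h(s)\,ds + \int_0^t \Big(\int_0^r U(s,r)\dot h(s)\,ds\Big)dr + \int_0^t \Big(\int_0^r V(s,r)\dot h(s)\,ds\Big)dW(r)
\\
&+ \int_0^t \nabla_x b(r,X(r)) G(r)\,dr + \int_0^t \nabla_x\sigma(r,X(r)) G(r)\,dW(r),
\end{align*}
which is of the form \eqref{eq:SDEDeriv}. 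On the other side, applying the Cameron--Martin shift $\omega\mapsto\omega+\varepsilon h$ to \eqref{eq:SDE} adds the drift $\varepsilon\,\sigma(s,\omega+\varepsilon h,X(s)(\omega+\varepsilon h))\dot h(s)\,ds$, which gives the representation of $P_\varepsilon$ already used in the proof of Proposition \ref{proposition:MallCalcRayAbsoluteCont}.

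Next I would form $R_\varepsilon := P_\varepsilon - G$ and derive its SDE. The spatial increments are linearised by the fundamental theorem of calculus: writing $\bar b_\varepsilon(s) := \int_0^1 \nabla_x b\big(s,\omega+\varepsilon h, X(s)(\omega)+\lambda\varepsilon P_\varepsilon(s)\big)\,d\lambda$ and, analogously, $\bar\sigma_\varepsilon(s)$, one has $\varepsilon^{-1}\big(b(s,\omega+\varepsilon h,X(s)(\omega+\varepsilon h)) - b(s,\omega+\varepsilon h,X(s)(\omega))\big) = \bar b_\varepsilon(s) P_\varepsilon(s)$ and similarly for $\sigma$. Using $P_\varepsilon = R_\varepsilon + G$ to split $\bar b_\varepsilon(s)P_\varepsilon(s) = \nabla_x b(s,X(s))R_\varepsilon(s) + \nabla_x b(s,X(s))G(s) + \big(\bar b_\varepsilon(s) - \nabla_x b(s,X(s))\big)P_\varepsilon(s)$ (and likewise for $\sigma$), the $G$-equation cancels the middle terms and I obtain
\begin{align*}
R_\varepsilon(t) = A_\varepsilon(t) + \int_0^t \nabla_x b(s,X(s)) R_\varepsilon(s)\,ds + \int_0^t \nabla_x\sigma(s,X(s)) R_\varepsilon(s)\,dW(s),
\end{align*}
where $A_\varepsilon$ gathers five remainders: the first-order Cameron--Martin term $\int_0^t\big(\sigma(s,\omega+\varepsilon h,X(s)(\omega+\varepsilon h)) - \sigma(s,\omega,X(s)(\omega))\big)\dot h(s)\,ds$; the two $\omega$-perturbation remainders $\int_0^t\big[\varepsilon^{-1}(b(s,\omega+\varepsilon h,X(s))-b(s,\omega,X(s))) - \int_0^s U(u,s)\dot h(u)du\big]ds$ and its $\sigma$/$dW$ analogue; and the two spatial-error terms $\int_0^t\big(\bar b_\varepsilon(s)-\nabla_x b(s,X(s))\big)P_\varepsilon(s)\,ds$ and $\int_0^t\big(\bar\sigma_\varepsilon(s)-\nabla_x\sigma(s,X(s))\big)P_\varepsilon(s)\,dW(s)$.

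The technical heart is to show $\|A_\varepsilon\|_\infty\xrightarrow{\bP}0$. The two perturbation remainders vanish in mean square, hence in probability, directly by Assumption \ref{Assumption:2}(iv) (with the Burkholder--Davis--Gundy inequality for the $dW$ term). The Cameron--Martin term splits into a spatial part bounded by $L\varepsilon\|P_\varepsilon\|_\infty\|\dot h\|_1$, which tends to $0$ in probability because $\|P_\varepsilon\|_\infty$ is tight by the uniform $L^2$-bound of Proposition \ref{proposition:MallCalcRayAbsoluteCont}, and an $\omega$-part controlled by Assumption \ref{Assumption:2}(ii). The genuinely delicate term is the drift spatial error $\int_0^t\big(\bar b_\varepsilon-\nabla_x b(X)\big)P_\varepsilon\,ds$: since $\nabla_x b$ has super-linear growth it cannot be dominated in $L^1(\bP)$, so a mean-square argument is unavailable---this is exactly the \emph{Witches Hat} obstruction flagged in the introduction. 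Instead I would bound it, via Cauchy--Schwarz, by $\big(\int_0^T|\bar b_\varepsilon(s)-\nabla_x b(s,X(s))|^2ds\big)^{1/2}\sqrt{T}\,\|P_\varepsilon\|_\infty$, note the last factor is tight, and argue the first factor converges to $0$ in probability: the integrand converges because $\omega+\varepsilon h\to\omega$ and $X(s)(\omega)+\lambda\varepsilon P_\varepsilon(s)\to X(s)(\omega)$, and the convergence of the time integral is obtained from the $L^0$-joint-continuity of $(\varepsilon,x)\mapsto\int_0^T|\nabla_x b(t,\omega+\varepsilon h,x)|^2dt$ in Assumption \ref{Assumption:2}(ii), after bridging from a fixed spatial point to the process-valued argument $X(s)+\lambda\varepsilon P_\varepsilon(s)$. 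A product of a tight sequence and a sequence converging to $0$ in probability converges to $0$ in probability. The $\sigma$-spatial error is easier since $\nabla_x\sigma$ is bounded by $L$: its quadratic variation is dominated and tends to $0$ in probability, so the stochastic integral does too.

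Finally, the equation for $R_\varepsilon$ is linear with coefficients $\nabla_x b(s,X(s))$, satisfying the one-sided bound $x^{T}\nabla_x b(s,X(s))x\le L|x|^2$ inherited from the monotonicity of $b$, and $\nabla_x\sigma(s,X(s))$ bounded by $L$, both vanishing at $R_\varepsilon=0$. This is precisely the setting of the Gr\"onwall inequality for the topology of convergence in probability: the argument of Proposition \ref{Proposition:GronwallConProb} adapts directly to linear SDEs of the form \eqref{eq:SDEDeriv} (using Theorem \ref{theorem:ExistenceUniquenessMaoLinearSDE} for the uniform moment bound on the stopped process), and with $A_\varepsilon$ in the role of the driving sequence it yields $\|R_\varepsilon\|_\infty\xrightarrow{\bP}0$, which is the claim. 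I expect the main obstacle to be the drift spatial-error term, whose control is what forces the whole argument to be carried out in probability rather than in mean square.
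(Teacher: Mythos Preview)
Your proposal is correct and follows essentially the same route as the paper: the same decomposition of $R_\varepsilon = P_\varepsilon - G$ into the five remainder terms, the same treatment of each (mean-square convergence for the $\omega$-perturbation and diffusion-spatial terms, convergence in probability only for the drift-spatial term, tightness of $\|P_\varepsilon\|_\infty$ from Proposition~\ref{proposition:MallCalcRayAbsoluteCont}), and the same appeal to Proposition~\ref{Proposition:GronwallConProb} to close. One small slip: the $\omega$-part of the Cameron--Martin term is controlled by Assumption~\ref{Assumption:2}(iv) (which bounds the $\omega$-difference quotient of $\sigma$ in $L^2$), not by (ii).
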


\begin{proof} 

Let $t\in[0,T]$. To make the proof more readable we introduce several shorthand notations $M^h$, $P_\varepsilon$ and $Y_\varepsilon$, to denote increments and its differences, namely, define 
$$
M^h(t)(\omega):= \int_0^t M_s(t)(\omega) \dot{h}(s) ds, \quad 
P_\varepsilon(t)(\omega):=\tfrac{X(t)(\omega+\varepsilon h) - X(t)(\omega)}{\varepsilon},$$
and $Y_\varepsilon(t)(\omega):=P_\varepsilon(t)(\omega) - M^h(t)(\omega)$. The proof's goal is to show that $\| Y_\varepsilon(\cdot)(\omega) \|_\infty \xrightarrow{\bP} 0$ as $\varepsilon \searrow 0$.

Methodologically, we write out the SDE for $Y_\varepsilon(t)(\omega)=P_\varepsilon(t)(\omega) - M^h(t)(\omega)$ which we then break into a sequence of terms that are manipulated individually to yield an final inequality amenable to our Gr\"onwall type result for Convergence in Probability of Proposition \ref{Proposition:GronwallConProb}.

Firstly, we have
\begin{align*}
P_\varepsilon(t)(\omega) =& \int_0^t \sigma(s, \omega+\varepsilon h, X(s)(\omega+\varepsilon h)) \dot{h}(s) ds
\\
&+\int_0^t \Big[ b(s, \omega+\varepsilon h, X(s)(\omega + \varepsilon h)) - b(s, \omega, X(s)(\omega))\Big] ds
\\
&+\int_0^t \Big[ \sigma(s, \omega+\varepsilon h, X(s)(\omega + \varepsilon h)) - \sigma(s, \omega, X(s)(\omega))\Big] dW(s).
\end{align*}
This would mean we can decompose the SDE for $Y_\varepsilon=P_\varepsilon - M^h$  as 
\begin{align}
\nonumber
Y_\varepsilon(t)(\omega)&   =P_\varepsilon(t)(\omega) - M^h(t)(\omega)= \frac{X(t)(\omega+ \varepsilon h) - X(t)(\omega)}{\varepsilon } - M^h(t)(\omega)
\\
\label{eq:MallDerivConProb1}
&
= \int_0^t \Big[ \sigma(s,\omega+ \varepsilon h, X(s)(\omega+ \varepsilon h)) - \sigma(s, \omega, X(s)(\omega))\Big] \dot{h}(s) ds
\\
\label{eq:MallDerivConProb2}
&+\int_0^t \Big[ \frac{b(s, \omega+\varepsilon h, X(s)(\omega)) - b(s, \omega, X(s)(\omega)) }{\varepsilon} -  \int_0^s U(r, s, \omega)\dot{h}(r)dr \Big]ds
\\
\label{eq:MallDerivConProb3}
&+\int_0^t \Big[ \frac{\sigma(s, \omega+\varepsilon h, X(s)(\omega)) - \sigma(s, \omega, X(s)(\omega)) }{\varepsilon} -  \int_0^s V(r, s, \omega)\dot{h}(r)dr \Big]dW(s)
\\
\label{eq:MallDerivConProb4}
&+ \int_0^t \Big[\int_0^1 \nabla_xb(s, \omega+\varepsilon h, \Xi(s)) d\xi - \nabla_x b(s, \omega, X(s)(\omega)) \Big] P_\varepsilon(s)(\omega) ds
\\
\label{eq:MallDerivConProb5}
&+ \int_0^t \Big[ \int_0^1 \nabla_x\sigma(s, \omega+ \varepsilon h, \Xi(s)) d\xi - \nabla_x \sigma(s, \omega, X(s)(\omega)) \Big] P_\varepsilon(s)(\omega) dW(s)
\\
\nonumber
&+\int_0^t \nabla_xb(s, \omega, X(s)(\omega)) Y_\varepsilon(s)(\omega) ds
+\int_0^t \nabla_x\sigma(s, \omega, X(s)(\omega)) Y_\varepsilon(s)(\omega) dW(s),
\end{align}
where $\Xi(\cdot) = X(\cdot)(\omega) + \xi[X(\cdot)(\omega+\varepsilon h) - X(\cdot)(\omega)]$.

Then we take $\sup$ over $t\in[0,T]$. Notice that we will not use an It\^o type formula on the SDE, but proving convergence for each of the individual terms. 

Firstly we consider the mean convergence of \eqref{eq:MallDerivConProb1},
\begin{align*}
\bE\Big[ \Big( \int_0^T &\Big| \Big[ \sigma(s,\omega+ \varepsilon h, X(s)(\omega+ \varepsilon h)) - \sigma(s, \omega, X(s)(\omega))\Big] \dot{h}(s) \Big| ds \Big)^2 \Big]
\\
\leq& \|\dot{h}\|_2^2 \cdot \bE\Big[ \int_0^T \Big| \sigma(s,\omega+ \varepsilon h, X(s)(\omega+ \varepsilon h)) - \sigma(s, \omega, X(s)(\omega)) \Big|^2 ds  \Big]
\\
\leq& 2\|\dot{h}\|_2^2 \Big( \bE\Big[ \int_0^T \Big| \sigma(s,\omega+\varepsilon h, X(s)(\omega)) - \sigma(s, \omega, X(s)(\omega)) \Big|^2 ds\Big] 
\\
&+ 2L^2T \bE\Big[ \|X(\omega+ \varepsilon h) - X(\omega)\|_\infty^2\Big] \Big)
\\
\leq& O\Big(\varepsilon^2\Big) +  O\Big(\varepsilon^2\Big), 
\end{align*}
hence this random variable converges to zero in mean square as $\varepsilon \to 0$. 

The term \eqref{eq:MallDerivConProb2} converges in mean from Assumption \ref{Assumption:2} since as $\varepsilon \to 0$
\begin{align*}
\bE\Big[ \int_0^T \Big| \frac{b(s, \omega+\varepsilon h, X(s)(\omega)) - b(s, \omega, X(s)(\omega)) }{\varepsilon} -  \int_0^s U(r, s, \omega)\dot{h}(r)dr \Big|ds\Big] \to 0.
\end{align*}
The term \eqref{eq:MallDerivConProb3} converges in mean from Assumption \ref{Assumption:2}, namely as $\varepsilon \to 0$ 
\begin{align*}
\bE\Big[& \sup_{t\in[0,T]} \Big| \int_0^t \Big[ \frac{\sigma(s, \omega+\varepsilon h, X(s)(\omega)) - \sigma(s, \omega, X(s)(\omega)) }{\varepsilon} -  \int_0^s V(r, s, \omega)\dot{h}(r)dr \Big]dW(s) \Big| \Big]
\\
&\leq C_1 \bE\Big[ \Big( \int_0^T \Big| \frac{\sigma(s, \omega+\varepsilon h, X(s)(\omega)) - \sigma(s, \omega, X(s)(\omega)) }{\varepsilon} -  \int_0^s V(r, s, \omega)\dot{h}(r)dr \Big|^2 ds \Big)^{\tfrac{1}{2}} \Big]
\to 0.
\end{align*}

For equation \eqref{eq:MallDerivConProb4}, we are not able to use mean convergence arguments because the terms $\nabla_x b(s, \omega, x)$ have polynomial growth in $x$ and we will not necessarily have enough finite moments to ensure that this term can be dominated. We already have $\lim_{\varepsilon \to 0} \bE[\|X(\omega+\varepsilon h) - X(\omega)\|_\infty]=0$, so clearly we also have convergence in probability. Also by Proposition \ref{proposition:ContinuityCamMartin}, we have
$$
\int_0^T \Big|\nabla_x b(s, \omega+\varepsilon h, X(s)(\omega+\varepsilon h)) - \nabla_x b(s, \omega, X(s)(\omega))\Big|ds \xrightarrow{\bP} 0. 
$$
for any choice of $x\in \bR^d$. Therefore, by continuity of $\nabla_xb$ from Assumption \ref{Assumption:2}, we get 
\begin{align*}
\int_0^T\Big|\int_0^1 \nabla_xb(s, \omega+\varepsilon h,\Xi(s)) d\xi - \nabla_x b(s, \omega, X(s)(\omega))\Big|ds \xrightarrow{\bP} 0,
\quad \textrm{as }\varepsilon \to0. 
\end{align*}
Since we also have finite moments of  ${\|X(\omega+\varepsilon h) - X(\omega)\|_{\infty}}/{\varepsilon}$ by Proposition \ref{proposition:MallCalcRayAbsoluteCont}, we can conclude that \eqref{eq:MallDerivConProb4} converges to zero in probability. 


For \eqref{eq:MallDerivConProb5} we know that $\sigma$ is Lipschitz so we have $\nabla_x \sigma$ is bounded. Hence, we won't have the same integrability issues as with \eqref{eq:MallDerivConProb4}. Therefore, we use convergence in mean. By the Burkholder-Davis-Gundy Inequality and recalling Proposition \ref{proposition:MallCalcRayAbsoluteCont} we get
\begin{align*}
\bE\Big[ &\sup_{t'\in[0,T]} \Big| \int_0^{t'} \Big( \int_0^1 \nabla_x\sigma(s, \omega+ \varepsilon h, \Xi(s)) d\xi - \nabla_x \sigma(s, \omega, X(s)(\omega)) \Big) \cdot P_\varepsilon(s)(\omega) dW(s)\Big| \Big]
\\
\leq& C_1 \bE\Big[ \Big( \int_0^T \Big| \int_0^1 \nabla_x\sigma(s, \omega+ \varepsilon h, \Xi(s)) d\xi - \nabla_x \sigma(s, \omega, X(s)(\omega)) \Big|^2 \cdot \Big| P_\varepsilon(s)(\omega) \Big|^2 ds \Big)^{\tfrac{1}{2}} \Big] 
\\
\leq& C_1 \bE\Big[ \Big( \int_0^T \Big| \int_0^1 \nabla_x\sigma(s, \omega+ \varepsilon h, \Xi(s)) d\xi - \nabla_x \sigma(s, \omega, X(s)(\omega)) \Big|^2ds \Big)^{\tfrac{1}{2}} \cdot  \| P_\varepsilon(\omega)\|_\infty  \Big] 
\\
\leq& C_1 \bE\Big[ \| P_\varepsilon(\omega) \|_{\infty}^2\Big]^{\tfrac{1}{2}} \cdot \bE\Big[ \int_0^T \Big| \int_0^1 \nabla_x\sigma(s, \omega+ \varepsilon h, \Xi(s)) d\xi - \nabla_x \sigma(s, \omega, X(s)(\omega)) \Big|^2ds  \Big]^{\tfrac{1}{2}}. 
\end{align*}
In the same way as earlier, by continuity of $\nabla_x\sigma $ from Assumption \ref{Assumption:2} and Proposition \ref{proposition:ContinuityCamMartin} we get
\begin{align*}
\int_0^T \Big| \int_0^1 \nabla_x\sigma(s, \omega+ \varepsilon h, \Xi(s)) d\xi - \nabla_x \sigma(s, \omega, X(s)(\omega)) \Big|^2 ds \xrightarrow{\bP} 0.
\end{align*}

Also, by boundedness of $\nabla_x \sigma$, we have the immediate domination
$$
\int_0^T \Big| \int_0^1 \nabla_x\sigma(s, \omega+ \varepsilon h, \Xi(s)) d\xi - \nabla_x \sigma(s, \omega, X(s)(\omega)) \Big|^2 ds\leq 4L^2 T,
$$
so we clearly have uniform integrability of all orders. Hence
\begin{align*}
\lim_{\varepsilon \to 0} \bE\Big[ \int_0^T \Big| \int_0^1 \nabla_x\sigma(s, \omega+ \varepsilon h, \Xi(s)) d\xi - \nabla_x \sigma(s, \omega, X(s)(\omega)) \Big|^2ds  \Big]^{\tfrac{1}{2}} = 0. 
\end{align*}

Finally, the SDE for the process $Y_\varepsilon(t)(\omega)$ can be written in the convenient form
\begin{align*}
Y_\varepsilon(t)(\omega) = A_\varepsilon(\omega) + \int_0^t \nabla_x b(s, \omega, X(s)(\omega)) Y_\varepsilon(s)(\omega) ds + \int_0^t \nabla_x \sigma(s, \omega, X(s)(\omega)) Y_\varepsilon(s)(\omega) dW(s),
\end{align*}
where the sequence $A_\varepsilon$ is a sequence of random variables which converge to zero in probability. By Proposition \ref{Proposition:GronwallConProb} the random variable $\|Y_\varepsilon\|_\infty$ converges in probability to zero as $\varepsilon \to 0$. 
\end{proof}

\subsubsection*{Strong Stochastic G\^ateaux Differentiability}

\begin{theorem}
\label{theo:SGateauxDifferentiabilitySDE}
Let $X$ be solution to the SDE \eqref{eq:SDE} under Assumption \ref{Assumption:2}. Then for any $h\in H$
\begin{align*}
\lim_{\varepsilon \to 0} \bE\Bigg[ \Big\| \frac{X(\omega+\varepsilon h) - X(\omega)}{\varepsilon} - M^h(\omega)\Big\|_\infty \Bigg] =0. 
\end{align*}
Hence $X$ satisfies Equation \eqref{eq:StrongStochasticGatDif}, i.e.~is Strong Stochastically G\^ateaux differentiable.
\end{theorem}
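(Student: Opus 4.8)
The plan is to recognise that this statement is the conclusion of the mechanism already packaged in Corollary \ref{corollary:HowToRAC}, applied to $f = X$ viewed as an $\cS^2$-valued random variable with exponent $\delta = 1$. Writing $P_\varepsilon(\omega) = (X(\omega+\varepsilon h) - X(\omega))/\varepsilon$ and $Y_\varepsilon = P_\varepsilon - M^h$ as in the preceding proofs, the two ingredients are already in hand: Theorem \ref{theo:GateauxDifferentiabilitySDE} gives the convergence in probability $\|Y_\varepsilon\|_\infty \xrightarrow{\bP} 0$ (Stochastic G\^ateaux differentiability), and Proposition \ref{proposition:MallCalcRayAbsoluteCont} supplies a uniform second-moment bound on the increment quotients. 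Since convergence in probability together with uniform integrability implies convergence in mean (Vitali's theorem), the whole argument reduces to upgrading the probability convergence to an $L^1$ convergence via a uniform $L^{1+\delta}$ bound with $\delta>0$, which is precisely the hypothesis \eqref{eq:SSGD=RAC} of that corollary.

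First I would record the uniform integrability. Proposition \ref{proposition:MallCalcRayAbsoluteCont} gives directly $\sup_{\varepsilon \le 1} \bE[\|P_\varepsilon\|_\infty^2] < \infty$, i.e.\ \eqref{eq:SSGD=RAC} holds with $\delta = 1$ for the Banach space $E = \cS^2$. For completeness one also checks that the candidate derivative is square-integrable: by Cauchy--Schwarz $\|M^h\|_\infty \le \|\dot h\|_2\,(\sup_{t}\int_0^T |M_s(t)|^2\,ds)^{1/2}$, so the moment bound of Theorem \ref{theo:ExistenceUniqueness for Candidate DX} (valid since $p>2$) yields $\bE[\|M^h\|_\infty^2]<\infty$, and hence by the triangle inequality in $L^2(\Omega)$
\begin{align*}
\sup_{\varepsilon \le 1} \bE\big[\|Y_\varepsilon\|_\infty^2\big]^{1/2} \le \sup_{\varepsilon \le 1}\bE\big[\|P_\varepsilon\|_\infty^2\big]^{1/2} + \bE\big[\|M^h\|_\infty^2\big]^{1/2} < \infty.
\end{align*}
A family bounded in $L^2 = L^{1+\delta}$ with $\delta = 1>0$ is uniformly integrable, so the real-valued random variables $\{\|Y_\varepsilon\|_\infty\}_{\varepsilon\le 1}$ are uniformly integrable. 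Combining this with the convergence in probability from Theorem \ref{theo:GateauxDifferentiabilitySDE}, Vitali's convergence theorem yields $\bE[\|Y_\varepsilon\|_\infty] \to 0$, that is $\bE[\|P_\varepsilon - M^h\|_\infty]\to 0$, which is exactly Strong Stochastic G\^ateaux differentiability in the sense of Definition \ref{def:Reveillacs results}.

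I do not expect a genuinely new obstacle here: all of the analytic effort has already been expended in Proposition \ref{proposition:MallCalcRayAbsoluteCont} and Theorem \ref{theo:GateauxDifferentiabilitySDE}, and the present statement is their assembly through uniform integrability. The only point that deserves a moment's care is that uniform integrability is invoked for the \emph{real-valued} random variables $\|Y_\varepsilon\|_\infty$, while the convergence in probability is that of the $\cS^2$-valued increments $Y_\varepsilon$; the two are consistent because the Banach-space norm of $Y_\varepsilon$ is precisely $\|Y_\varepsilon\|_\infty$, so Vitali's theorem may be applied in $L^1(\Omega;\bR)$ to the norms and the conclusion transfers verbatim to the stated mean convergence.
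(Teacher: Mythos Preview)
Your proposal is correct and follows essentially the same approach as the paper: both combine the convergence in probability from Theorem \ref{theo:GateauxDifferentiabilitySDE} with the uniform $L^2$ bounds from Proposition \ref{proposition:MallCalcRayAbsoluteCont} and Theorem \ref{theo:ExistenceUniqueness for Candidate DX}, then invoke uniform integrability (packaged in the paper as Corollary \ref{corollary:HowToRAC}) to upgrade to $L^1$ convergence. You have simply spelled out the Vitali step that the paper leaves implicit in its citation of the corollary.
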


\begin{proof}
By Theorem \ref{theo:GateauxDifferentiabilitySDE}, we have convergence in Probability. Combining this with Proposition \ref{proposition:MallCalcRayAbsoluteCont} and Theorem \ref{theo:ExistenceUniqueness for Candidate DX}, we have
$$
\bE\Big[\, \Big\| \frac{X(\omega+\varepsilon h) - X(\omega)}{\varepsilon}\Big\|_\infty^2 \Big], 
\quad 
\bE\Big[\, \| M^h(\omega)\|_\infty^2\Big] < \infty. 
$$
Apply Corollary \ref{corollary:HowToRAC} to conclude. 
\end{proof}

\begin{remark}
Although convergence in probability may seem to be rather a weak result relative to the much stronger Almost sure convergence or convergence in mean square, it is actually the case that we now have both. After all, we proved that the sequence of random variables $\big({X(\cdot)(\omega+\varepsilon h) - X(\cdot)(\omega)}\big)/{\varepsilon}$ have uniform finite $p$ moments over $\varepsilon$ and the limit $D^h X(\cdot)$ has finite $p$ moments. Therefore, by standard probability theory we have mean square convergence. 

Moreover, convergence in probability implies existence of a subsequence which converges almost surely. This, combined with the Ray Absolute continuity ensures uniqueness of the limit for \emph{any} choice of subsequence which implies almost sure convergence. 
\end{remark}

\subsubsection*{Proof of the Malliavin differentiability result, Theorem \ref{theo-Mall-diff-monotone-SDEs}}

\begin{proof}[Proof of Theorem \ref{theo-Mall-diff-monotone-SDEs}]

The proof is straightforward and follows from Theorem \ref{theo:SGateauxDifferentiabilitySDE} and Theorem \ref{theo:Reveillacs results}.   
Further, the Malliavin Derivative satisfies the SDE \eqref{eq:SDEMallDeriv} which has a unique solution as proved in Theorem \ref{theo:ExistenceUniqueness for Candidate DX}. 
\end{proof}

\subsection[Proofs of the 2nd main result]{Proofs of the 2nd main result - Theorem \ref{theo:WhatsHere}}
\label{sec:An Example of such a differentiable SDE}

In order to prove the Malliavin differentiability (Theorem \ref{theo-Mall-diff-monotone-SDEs}) under the weakest possible conditions, we only assumed enough properties to ensure convergence of the Stochastic G\^ateaux Derivatives. However, the Stochastic G\^ateaux differentiability conditions for $b$ and $\sigma$ do not require that $b$ and $\sigma$ are Malliavin differentiable. These conditions need to be checked by the user on a case-by-case basis. Under slightly stronger conditions, but much easier to verify, we present an argument to establish integrability and convergence of $b$ and $\sigma$ to prove Theorem \ref{theo-Mall-diff-monotone-SDEs}. 

In \cite{geiss2016malliavin}, there is a discussion about how much continuity is required for the spacial variable in the Malliavin Derivatives of $b$ and $\sigma$ in order to prove Malliavin Differentiability of the solution $X$. The authors prove results similar  to those in this paper using much weaker continuity condition, but in doing so assume the integrability of the terms $D_sb(t, \omega, X(t))$ and $D_s\sigma(t, \omega, X(t))$. In our manuscript, we were unable to ensure integrability of $b$ and $\sigma$ evaluated at $X$ without the Lipschitz (or otherwise tractable assumptions). Weaker continuity conditions would have allowed for examples where $b(t, \omega, X(t)(\omega)$ and $\sigma(t, \omega, X(t)(\omega))$ were not adequately integrable. Therefore, for easy to check conditions, we work under Assumption \ref{Assumption:3} (iii') and (iv') (see Remark \ref{rem:cautionarytale}). 

For simplicity, we introduce Assumption \ref{Assumption:f} which contains all of the relevant properties of Assumption \ref{Assumption:3} that we require for this section. The function $f$ represents $b$ or $\sigma$ depending on the choice of $m$.

\begin{assumption} 
\label{Assumption:f}
Let $m\in\{ 1, 2\}$. Suppose that $f:[0,T]\times \Omega\times \bR^d \to \bR^d$ such that 
\begin{enumerate}[(i)]
\item $\forall x\in\bR^d$ $f(\cdot, \cdot, x) \in \bD^{1,p}(L^m([0,T]; \bR^d))$. 
\item $f$ is Locally Lipschitz in the spacial variable i.e~$\exists L_N>0$ such that $\forall x, y\in\bR^d$ such that $|x|, |y|\leq N$ and $\forall t\in [0,T]$, 
\begin{align*}
|f(t, \omega, x) - f(t, \omega, y)|\leq L_N |x-y| \quad \bP\mbox{-almost surely. }
\end{align*}
\item $Df$ are Lipschitz in their spatial variables i.e.~$\exists L>0$ constant such that $\forall (s, t) \in  [0,T]^2$ and $\forall x, y\in \bR^d$, 
\begin{align*}
|D_s f(t, \omega, x) - D_sf(t, \omega, y)| \leq L |x-y| \quad \bP\mbox{-almost surely.}
\end{align*}
\end{enumerate}
\end{assumption}

\subsubsection*{Integrability and indistinguishability of the Malliavin Derivative}

\begin{lemma}
\label{Lemma:FinMomMalDeriv:b+sigma}
Let $m\in\{ 1, 2\}$ and $p>2$. Let $X$ be solution to the SDE \eqref{eq:SDE} under Assumption \ref{Assumption:1} and let $f$ satisfy Assumption \ref{Assumption:f}.   
Then
\begin{align*}
\bE\Big[ \Big( \int_0^T \Big( \int_0^t | D_s f(t, \omega, X(t)(\omega)) |^2 ds\Big)^{\tfrac{m}{2}} dt \Big)^{\tfrac{p}{m}} \Big]<\infty.
\end{align*}
\end{lemma}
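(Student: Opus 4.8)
The plan is to reduce the whole statement to a pointwise-in-$(s,t,\omega)$ estimate coming from the spatial Lipschitz continuity of $D_s f$ in Assumption \ref{Assumption:f}(iii), and then to recognise the two resulting terms as (i) a $p$-th moment of $\|X\|_\infty$, controlled by Theorem \ref{theorem:ExistenceUniquenessMao}, and (ii) the $\bD^{1,p}(L^m)$-norm of $f(\cdot,\cdot,0)$, supplied by Assumption \ref{Assumption:f}(i). First I would note that, since $D_s f$ is progressively measurable and Lipschitz in $x$, the substitution $x = X(t)(\omega)$ yields a measurable object, and applying the Lipschitz bound at the pair $(X(t)(\omega),0)$ gives, $\bP$-a.s.\ and for a.e.\ $(s,t)$,
\[
|D_s f(t, \omega, X(t)(\omega))| \le |D_s f(t,\omega,0)| + L\,|X(t)(\omega)|.
\]
Squaring and integrating in $s$ over $[0,t]$, and using that $|X(t)|$ is independent of $s$, produces
\[
\int_0^t |D_s f(t,\omega,X(t))|^2\, ds \le 2\int_0^t |D_s f(t,\omega,0)|^2\, ds + 2L^2 t\,|X(t)|^2.
\]

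The second step is the bookkeeping of exponents. Because $m\in\{1,2\}$ we have $m/2\le 1$, so $x\mapsto x^{m/2}$ is subadditive and the $(m/2)$-power of the previous line splits into $(2L^2 t)^{m/2}|X(t)|^m$ plus $2^{m/2}\big(\int_0^t |D_s f(t,\omega,0)|^2 ds\big)^{m/2}$. Integrating in $t$ over $[0,T]$ and bounding $\int_0^T |X(t)|^m\, dt \le T\|X\|_\infty^m$ gives
\[
\int_0^T\!\Big(\int_0^t |D_s f(t,\omega,X(t))|^2 ds\Big)^{\frac m2} dt \le (2L^2T)^{\frac m2}T\|X\|_\infty^m + 2^{\frac m2}\int_0^T\!\Big(\int_0^t |D_s f(t,\omega,0)|^2 ds\Big)^{\frac m2} dt.
\]
Since $p>2\ge m$ we have $p/m>1$, so raising to the power $p/m$, using convexity in the form $(a+b)^{p/m}\le 2^{p/m-1}(a^{p/m}+b^{p/m})$, and taking expectations (with Tonelli justifying all interchanges, as every integrand is nonnegative) leaves exactly two terms to control.

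Finally I would identify those two terms. The first is a constant multiple of $\bE[\|X\|_\infty^p]$, finite by Theorem \ref{theorem:ExistenceUniquenessMao} under Assumption \ref{Assumption:1} with the same exponent $p$. For the second, the key observation is that $f(t,\cdot,0)$ is $\cF_t$-measurable by progressive measurability, whence $D_s f(t,\cdot,0)=0$ for $s>t$ (see \cite{nualart2006malliavin}*{Corollary 1.2.1}); therefore $\int_0^t |D_s f(t,\omega,0)|^2 ds = \int_0^T |D_s f(t,\omega,0)|^2 ds = \|D_\cdot f(t,\omega,0)\|_{\cH}^2$, and the second term is precisely $2^{p/2}\,\bE\big[\|Df(\cdot,\cdot,0)\|_{L^m([0,T];\cH)}^p\big]$, a component of the $\bD^{1,p}(L^m([0,T];\bR^d))$-norm of $f(\cdot,\cdot,0)$, finite by Assumption \ref{Assumption:f}(i). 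The only genuine subtlety — and the step I would check most carefully — is this matching of conventions: verifying that the Malliavin norm on $L^m$-valued variables is exactly $\bE\big[\big(\int_0^T \|D_\cdot f(t,\cdot,0)\|_{\cH}^m\, dt\big)^{p/m}\big]$ and that the reduction of the inner $s$-integral from $[0,T]$ to $[0,t]$ via $\cF_t$-measurability is legitimate. Everything else is a routine application of subadditivity, convexity, and Tonelli's theorem.
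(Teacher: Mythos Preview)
Your proposal is correct and follows essentially the same route as the paper: apply the spatial Lipschitz bound on $D_s f$ from Assumption \ref{Assumption:f}(iii) at the pair $(X(t),0)$, then control the two resulting terms via $\bE[\|X\|_\infty^p]<\infty$ (Theorem \ref{theorem:ExistenceUniquenessMao}) and $f(\cdot,\cdot,0)\in\bD^{1,p}(L^m)$ (Assumption \ref{Assumption:f}(i)). The paper's proof is much terser---it simply asserts the finiteness of the $x=0$ term ``by definition of $\bD^{1,p}(L^m)$'' and writes down the final inequality---whereas you supply the intermediate bookkeeping (subadditivity of $x\mapsto x^{m/2}$, convexity of $x\mapsto x^{p/m}$) and, more usefully, make explicit the $\cF_t$-measurability argument that identifies $\int_0^t|D_s f(t,\omega,0)|^2\,ds$ with the full $\cH$-norm.
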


\begin{proof}
By the definition of $\bD^{1, p}(L^m([0,T]; \bR^d))$ we have for any $t\in[0,T]$
\begin{align*}
\bE\Big[ \Big( \int_0^T \Big( \int_0^t | D_s f(t, \omega, 0) |^2 ds\Big)^{\tfrac{m}{2}} dt \Big)^{\tfrac{p}{m}} \Big] <\infty. 
\end{align*} 
Therefore for some constant $C$ (depending on $p$, $m$, $T$, $L$) we have 
\begin{align*}
\bE\Big[\Big( \int_0^T \Big( \int_0^t &\Big| D_sf(t, \omega, X(t)(\omega) ) \Big|^2 ds\Big)^{\tfrac{m}{2}} dt \Big)^{\tfrac{p}{m}} \Big] 
\\
\leq& 
2^{\tfrac{p-m}{m}} C \Bigg(
\bE\Big[ \Big( \int_0^T \Big( \int_0^t |D_s f(t, \omega, 0)|^2 ds \Big)^{\tfrac{m}{2}} dt \Big)^{\tfrac{p}{m}} \Big]
+ \bE\Big[ \| X\|_\infty^p\Big]
\Bigg)<\infty. 
\end{align*}
\end{proof}

We have by Assumption \ref{Assumption:f} that for every $x\in \bR^d$ the random field $f(\cdot, \cdot, x)$ is a Malliavin differentiable process. However, it is not immediate that we have the same for $f(\cdot, \cdot, X(\cdot)(\cdot))$. We first prove an indistinguishability property for when we replace $x$ by $X(\cdot)(\omega)$.  

\begin{lemma}
\label{lemma:MallCalcIndistinguishability1}
Let $m\in\{ 1, 2\}$ and $p>2$. Let $X$ be solution to the SDE \eqref{eq:SDE} under Assumption \ref{Assumption:1}. Let $f$ satisfy Assumption \ref{Assumption:f} and recall the directional derivative notation introduced previously, $D^hF = \langle DF, h\rangle$ for any choice of $h\in H$. 

Then, for $h\in H$ we have, $(t, \omega)$-almost surely that
\begin{align*}
f\Big(t, \omega+\varepsilon h, X(t)(\omega)\Big) - f\Big(t, \omega, X(t)(\omega)\Big) 
= \int_0^\varepsilon D^h f(t, \omega+rh, X(t)(\omega)) dr.
\end{align*}
\end{lemma}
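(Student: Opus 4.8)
The plan is to prove the identity first for a fixed deterministic spatial argument $x\in\bR^d$, where it is essentially a restatement of Ray Absolute Continuity, and then to transfer it to the random argument $x=X(t)(\omega)$ by combining a countable dense exhaustion of $\bR^d$ with joint continuity in $x$. The continuity is exactly where Assumption \ref{Assumption:f}(ii)--(iii) enter, while the Cameron--Martin equivalence of $\bP$ and its translates (Proposition \ref{proposition:CamMarForm1}) is what lets the relevant null sets be controlled along the perturbed paths.

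First, fix $x\in\bR^d$. By Assumption \ref{Assumption:f}(i) we have $f(\cdot,\cdot,x)\in\bD^{1,p}(L^m([0,T];\bR^d))$, so Theorem \ref{theorem:SugitaResult} ensures $f(\cdot,\cdot,x)$ is Ray Absolutely Continuous; the representation recorded in the Remark following that theorem then gives, for $\bP$-a.e. $\omega$, the $L^m([0,T];\bR^d)$-valued identity
\[
f(\cdot,\omega+\varepsilon h,x)-f(\cdot,\omega,x)=\int_0^\varepsilon D^hf(\cdot,\omega+rh,x)\,dr ,
\]
where $D^hf(\cdot,\omega+rh,x)=\langle Df(\cdot,\omega+rh,x),h\rangle$ is the directional Malliavin derivative evaluated along the ray. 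Reading this $L^m$-equality pointwise in $t$ and applying Fubini, the identity holds for $\bP\otimes dt$-almost every $(t,\omega)$.

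Next, choose a countable dense set $D\subset\bR^d$ and intersect the countably many full-measure sets just obtained over $q\in D$; this produces a single set $G\subset[0,T]\times\Omega$ of full $\bP\otimes dt$-measure on which the identity holds simultaneously for every $q\in D$. I then argue both sides are continuous in $x$ off a null set. The left-hand side is locally Lipschitz in $x$ by Assumption \ref{Assumption:f}(ii), applied both at $\omega$ and at $\omega+\varepsilon h$, the latter being legitimate because Proposition \ref{proposition:CamMarForm1} shows the translation $\omega\mapsto\omega+\varepsilon h$ maps $\bP$-null sets to $\bP$-null sets. For the right-hand side, Assumption \ref{Assumption:f}(iii) yields $|D^hf(t,\omega,x)-D^hf(t,\omega,y)|\le L\sqrt T\,\|\dot h\|_2\,|x-y|$ uniformly, and since for $\bP$-a.e. $\omega$ the set of $r$ for which $\omega+rh$ falls in the exceptional set of (iii) is Lebesgue-null (again by the equivalence of $\bP$ with its translates together with Fubini), the integral $\int_0^\varepsilon D^hf(t,\omega+rh,\cdot)\,dr$ inherits the same Lipschitz-in-$x$ bound. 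On the intersection of $G$ with this full-measure continuity set, both sides are continuous functions of $x$ agreeing on the dense set $D$, hence they agree for every $x\in\bR^d$; evaluating at $x=X(t)(\omega)$, which is finite for $\bP\otimes dt$-a.e. $(t,\omega)$, gives the claim.

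The main obstacle is precisely this transfer from ``fixed $x$, a.e. $\omega$'' to ``the random point $x=X(t)(\omega)$'': the exceptional null set in the first step depends on $x$, so a random argument cannot be substituted directly. The device that resolves it is the combination of a countable dense exhaustion with genuine continuity in $x$ of \emph{both} sides, and the only nontrivial input there is that the Lipschitz regularity survives along the Cameron--Martin perturbations $\omega+\varepsilon h$ and $\omega+rh$, which is guaranteed by Proposition \ref{proposition:CamMarForm1}.
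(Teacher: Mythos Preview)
Your proposal is correct and follows essentially the same route as the paper: fix $x$, obtain the identity on a $(t,\omega)$-full set depending on $x$, pass to a countable dense set to make the exceptional set uniform, and then use continuity in $x$ of both sides (local Lipschitz of $f$ for the left, Lipschitz of $Df$ for the right) to extend to all of $\bR^d$ and hence to $x=X(t)(\omega)$. You are a bit more explicit than the paper in two places---invoking Sugita's characterization to justify the base identity and invoking the Cameron--Martin equivalence to carry the almost-sure Lipschitz hypotheses across the shifts $\omega\mapsto\omega+\varepsilon h$ and $\omega\mapsto\omega+rh$---but these are elaborations of exactly the steps the paper's proof takes.
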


\begin{proof}
We have that $\forall x\in\bR^d$ that $\exists C_x \subset [0,T]\times \Omega$ with $\bE[\int_0^T \1_{C_x}(t, \omega) dt]=0$, dependent on the choice of $x$, for which $\forall (t, \omega) \in [0,T]\times \Omega \backslash C_x$ that

\begin{equation}
\label{eq:Indistinguish1}
f(t, \omega+\varepsilon h, x) - f(t, \omega, x) = \int_0^\varepsilon D^h f(t, \omega+rh, x) dr.
\end{equation}
We wish to prove that we can choose a null set $C$ which is independent of $x$ outside of which the equality holds. To do this, it suffices to prove almost sure continuity with respect to $x$ of both the left and right hand side of \eqref{eq:Indistinguish1}. 

Almost sure continuity of the left hand side is immediate since $f$ is locally Lipschitz. For the right hand side, we use the Lipschitz properties of the Malliavin derivative.  Let $r_i$ be an enumeration of the rationals $\bQ^d$. Then we have $\bigcup_{i} C_{r_i}$ is also a null set since it is the countable union of null sets. Then for $(t, \omega) \in [0,T]\times \Omega \backslash \Big( \bigcup_{i} C_{r_i}\Big)$ and $\forall x\in \bQ^d$ equation \eqref{eq:Indistinguish1} holds. Then by the continuity of $f$ and its Malliavin derivative we conclude that this also holds $\forall x\in \bR^d$. 
\end{proof}

\subsubsection*{Strong Stochastic G\^ateaux Differentiability} 
\begin{lemma}
\label{lemma:MallFinSecMom1}
Let $m\in\{ 1, 2\}$ and $p>2$. Let $X$ be solution to the SDE \eqref{eq:SDE} under Assumption \ref{Assumption:1}. Let $f$ satisfy Assumption \ref{Assumption:f}. Then 
\begin{align*}
\bE\Bigg[  \Big(\int_0^T \Big| \frac{f(t, \omega+\varepsilon h, X(t)(\omega)) - f(t, \omega, X(t)(\omega))}{\varepsilon} \Big|^m dt\Big)^{\tfrac{2}{m}}  \Bigg] = O(1),\quad \textrm{as } \varepsilon\searrow 0. 
\end{align*}
\end{lemma}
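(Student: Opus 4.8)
The plan is to reduce the increment to an average of directional Malliavin derivatives via Lemma \ref{lemma:MallCalcIndistinguishability1} and then, crucially, to separate the spatial argument from the shift in $\omega$. First I would write
$$
\frac{f(t,\omega+\varepsilon h, X(t)(\omega)) - f(t,\omega,X(t)(\omega))}{\varepsilon} = \frac{1}{\varepsilon}\int_0^\varepsilon D^h f(t,\omega+rh,X(t)(\omega))\,dr,
$$
so that the left-hand side is the average over $r\in[0,\varepsilon]$ of $D^hf(t,\omega+rh,X(t)(\omega))$. Since $m\ge 1$ and $2/m\ge 1$, two successive applications of Jensen's inequality with respect to the normalised measure $\varepsilon^{-1}\,dr$ on $[0,\varepsilon]$, together with Tonelli's theorem, give
$$
\bE\Bigg[\Big(\int_0^T \Big|\tfrac{1}{\varepsilon}\int_0^\varepsilon D^hf(t,\omega+rh,X(t)(\omega))\,dr\Big|^m dt\Big)^{\frac{2}{m}}\Bigg] \le \frac{1}{\varepsilon}\int_0^\varepsilon \bE\Bigg[\Big(\int_0^T |D^hf(t,\omega+rh,X(t)(\omega))|^m dt\Big)^{\frac{2}{m}}\Bigg]dr,
$$
so it suffices to bound the inner expectation uniformly in $r\in[0,1]$.

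The main obstacle, and the reason the estimate is delicate, is that inside $D^hf(t,\omega+rh,X(t)(\omega))$ the spatial slot carries the \emph{unshifted} solution $X(t)(\omega)$ while the randomness of the coefficient is evaluated at the shifted path $\omega+rh$; this mismatch blocks a direct application of the Cameron--Martin formula, Proposition \ref{proposition:CamMarForm1}. I would resolve it with the spatial-Lipschitz property of the Malliavin derivative, Assumption \ref{Assumption:f}(iii). Writing $D^hf = \int_0^T D_sf\,\dot h(s)\,ds$ and using Cauchy--Schwarz in $s$ followed by the triangle inequality in $L^2(ds)$ and the Lipschitz bound $|D_sf(t,\cdot,x)-D_sf(t,\cdot,0)|\le L|x|$,
$$
|D^hf(t,\omega+rh,X(t)(\omega))| \le \|\dot h\|_2\Big[\Big(\int_0^T |D_sf(t,\omega+rh,0)|^2 ds\Big)^{\frac12} + L\sqrt{T}\,|X(t)(\omega)|\Big].
$$
The second term is a functional of $\omega$ alone, and $\big(\int_0^T|X(t)(\omega)|^m dt\big)^{2/m}\le C_{m,T}\,\|X\|_\infty^2$ has finite expectation because $\bE[\|X\|_\infty^p]<\infty$ with $p>2$; this contribution is therefore bounded uniformly in $r$. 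The first term is now a functional of $\omega+rh$ only, which is exactly what makes the change of variables legitimate.

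For that first term, set $G(\omega)=\big(\int_0^T(\int_0^T|D_sf(t,\omega,0)|^2 ds)^{m/2}dt\big)^{2/m}$, so its contribution to the inner expectation is bounded by a constant times $\bE[G(\omega+rh)]$. By the Cameron--Martin formula $\bE[G(\omega+rh)]=\bE[G(\omega)\,\cE(r\dot h)(T)]$, and Hölder's inequality with exponents $p/2>1$ and its conjugate $q=\tfrac{p}{p-2}$ gives $\bE[G(\omega+rh)]\le \bE[G^{p/2}]^{2/p}\,\bE[\cE(r\dot h)(T)^{q}]^{1/q}$. Here $\bE[G^{p/2}]=\bE\big[(\int_0^T(\int_0^T|D_sf(t,\omega,0)|^2ds)^{m/2}dt)^{p/m}\big]<\infty$ is precisely the finiteness afforded by $f(\cdot,\cdot,0)\in\bD^{1,p}(L^m([0,T];\bR^d))$ already exploited in Lemma \ref{Lemma:FinMomMalDeriv:b+sigma}, while a direct Gaussian computation yields $\bE[\cE(r\dot h)(T)^{q}]=\exp\big(\tfrac{q(q-1)r^2\|\dot h\|_2^2}{2}\big)$, which is bounded by $\exp\big(\tfrac{q(q-1)\|\dot h\|_2^2}{2}\big)$ uniformly over $r\le 1$. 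Combining the two contributions bounds the inner expectation by a constant independent of $r\in[0,1]$, hence independent of $\varepsilon\le 1$, which establishes the $O(1)$ claim. The only point requiring care is to keep the Cameron--Martin change of variables confined to the $0$-argument term, which is exactly what the Lipschitz splitting achieves.
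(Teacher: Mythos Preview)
Your proof is correct and follows essentially the same route as the paper's: represent the increment via Lemma~\ref{lemma:MallCalcIndistinguishability1}, pull the $r$-average out by Jensen, split off the spatial argument using the Lipschitz property of $Df$ from Assumption~\ref{Assumption:f}(iii), and control the remaining $0$-argument term by Cameron--Martin plus H\"older. Your organisation of the Cameron--Martin step (applied directly to the functional $G$ with H\"older exponents $p/2$ and $p/(p-2)$) is if anything cleaner than the paper's, which threads the Dol\'eans--Dade exponential through the $dt$-integral and uses exponents $p/m$, $p/(p-m)$, but the underlying idea is the same.
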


\begin{proof}
Fix $\varepsilon >0$. By Lemma \ref{lemma:MallCalcIndistinguishability1}, for almost all $\omega\in \Omega$ we have that
\begin{align*}
\int_0^T \big| f(t, \omega+\varepsilon h, X(t)(\omega)) - f(t, \omega, X(t)(\omega))\big|^m dt = \int_0^T \Big|\int_0^\varepsilon D^h f (t, \omega+rh, X(t)(\omega)) dr\Big|^m dt. 
\end{align*}

Arguing from this, we have with the help of the directional derivative $D^h$, Jensen and reverse Jensen inequality,
\begin{align*}
\nonumber
\Big( \int_0^T & \big| f(t, \omega+\varepsilon h, X(t)(\omega)) - f(t, \omega, X(t)(\omega))\big|^m dt\Big)^{\tfrac{2}{m}} 
\\ \nonumber
&
=  \Big(\int_0^T \Big| \int_0^\varepsilon D^h f(t, \omega+rh, X(t)(\omega)) dr\Big|^m dt\Big)^{\tfrac{2}{m}} 
\\
\nonumber
&\leq \varepsilon \int_0^\varepsilon \Big( \int_0^T |D^h f(t, \omega+rh, X(t)(\omega))|^m dt\Big)^{\tfrac{2}{m}} dr 
\\
\nonumber
&\leq \varepsilon \|\dot{h}\|_2^2 \int_0^\varepsilon  \Big( \int_0^T \Big( \int_0^t | D_s f(t, \omega+rh, X(t)(\omega)) |^2ds \Big)^{\tfrac{m}{2}} dt \Big)^{\tfrac{2}{m}} dr 
\\ 
&\leq 2^{\tfrac{2}{m}} \varepsilon \|\dot{h}\|_2^2 \Bigg(\int_0^\varepsilon  \Big( \int_0^T \Big( \int_0^t | D_s f(t, \omega+rh, 0) |^2ds \Big)^{\tfrac{m}{2}} dt  \Big)^{\tfrac{2}{m}} dr 
\varepsilon
\|X(\omega)\|_\infty^{2} \cdot T^{\tfrac{2}{m}+1}\Bigg). 
\end{align*}
Therefore
\begin{align}
\nonumber
\bE\Big[ \frac{1}{\varepsilon^2} &\big(\int_0^T \big| f(t, \omega+\varepsilon h, X(t)(\omega)) - f(t, \omega, X(t)(\omega)) \big|^m dt\big)^{\tfrac{2}{m}}  \Big] 
\\
\label{eq:MallFinSecMom1.1}
&\leq  2^{\tfrac{2}{m}}  \|\dot{h}\|_2^2 \bE\Big[  \frac{1}{\varepsilon} \int_0^\varepsilon  \Big( \int_0^T \Big( \int_0^t | D_s f(t, \omega+rh, 0) |^2ds \Big)^{\tfrac{m}{2}} dt  \Big)^{\tfrac{2}{m}} dr \Big]
\\
\nonumber
&\hspace{20pt}+ 2^{\tfrac{2}{m}} \|\dot{h}\|_2^2  T^{\tfrac{2}{m}+1} \bE\Big[ \|X(\omega)\|_\infty^{2}\Big] . 
\end{align}
We estimate term \eqref{eq:MallFinSecMom1.1} as follows and with the help of Proposition \ref{proposition:CamMarForm1}
\begin{align*}
\eqref{eq:MallFinSecMom1.1} \leq& 2^{\tfrac{2}{m}}  \|\dot{h}\|_2^2 \frac{1}{\varepsilon} \int_0^\varepsilon \bE\Big[  \int_0^T \Big( \int_0^t | D_s f(t, \omega, 0) |^2ds \Big)^{\tfrac{m}{2}}  \cdot \cE(r \dot h)(t) dt  \Big]^{\tfrac{2}{m}}  dr
\\
\leq& 2^{\tfrac{2}{m}}  \|\dot{h}\|_2^2 \bE\Big[ \Big( \int_0^T \Big( \int_0^t | D_s f(t, \omega, 0) |^2ds \Big)^{\tfrac{m}{2}}dt\Big)^{\tfrac{p}{m}} \Big]^{\tfrac{2}{p}}  \frac{1}{\varepsilon} \int_0^\varepsilon \bE\Big[ \| \cE(r\dot h)(\cdot) \|_\infty^{\tfrac{p}{p-m}}\Big]^{\tfrac{2(p-m)}{pm}} dr
\\
<& O(1),
\end{align*}
with $\cE(r \dot h)$ denoting the stochastic exponential of $r \dot h$ as introduced in \eqref{eq:DDexponential}.
\end{proof}
 
\begin{lemma}
\label{lemma:MallDerivprelimConverge}
Let $m\in\{ 1, 2\}$ and $p>2$. Let $X$ be solution to the SDE \eqref{eq:SDE} under Assumption \ref{Assumption:1}. Let $f$ satisfy Assumption \ref{Assumption:f}. Then for $h\in H$ and any $\delta>0$
\begin{equation}
\label{eq:MallDerivprelimConverge}
\lim_{\varepsilon \to 0} \bP\Big[ \Big( \int_0^T \Big| \frac{1}{\varepsilon}\int_0^\varepsilon D^h f\big(t, \omega+rh, X(t)(\omega)\big) dr - D^h f\big(t, \omega, X(t)(\omega)\big) \Big|^m dt >\delta \Big] = 0.
\end{equation}
\end{lemma}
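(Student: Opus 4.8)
The plan is to reduce \eqref{eq:MallDerivprelimConverge} to a Cesàro/Lebesgue–differentiation argument. Writing the integrand as a mean of increments and applying Jensen's inequality in the $r$–average (against the probability measure $\tfrac1\varepsilon\1_{[0,\varepsilon]}(r)\,dr$ and the convex map $z\mapsto|z|^m$) gives, with
$$
G(r):=\int_0^T\Big| D^h f\big(t,\omega+rh,X(t)(\omega)\big)-D^h f\big(t,\omega,X(t)(\omega)\big)\Big|^m\,dt,
$$
the pointwise bound
$$
\int_0^T\Big|\tfrac1\varepsilon\!\int_0^\varepsilon D^hf(t,\omega+rh,X(t)(\omega))\,dr-D^hf(t,\omega,X(t)(\omega))\Big|^m dt\ \le\ \tfrac1\varepsilon\int_0^\varepsilon G(r)\,dr .
$$
Hence it suffices to prove $\bE[G(r)]\to0$ as $r\to0$: then the Cesàro average $\tfrac1\varepsilon\int_0^\varepsilon \bE[G(r)]\,dr$ tends to $0$, so by Tonelli the left–hand side converges to $0$ in $L^1(\Omega)$ and a fortiori in probability. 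I will obtain $\bE[G(r)]\to0$ from two ingredients, combined through Vitali's theorem: (a) $G(r)\xrightarrow{\bP}0$ as $r\to0$, and (b) uniform integrability of the family $\{G(r)\}_{r\le1}$.

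For (a) I first record that $D^hf(t,\omega,\cdot)$ is Lipschitz with a \emph{deterministic} constant: by Cauchy–Schwarz and Assumption \ref{Assumption:f}(iii),
$$
\big|D^hf(t,\omega,x)-D^hf(t,\omega,y)\big|\le \|\dot h\|_2\Big(\int_0^t|D_sf(t,\omega,x)-D_sf(t,\omega,y)|^2ds\Big)^{1/2}\le \|\dot h\|_2 L\sqrt{T}\,|x-y|
$$
$\bP$–a.s., hence also after the Cameron–Martin shift $\omega\mapsto\omega+rh$. Fix $N$ and work on $A_N:=\{\|X\|_\infty\le N\}$; cover $[-N,N]^d$ by finitely many cubes of side $\eta$ with rational centres $q_1,\dots,q_K$, and let $\pi(X(t))$ be the centre of the cube containing $X(t)$, so that $|X(t)-\pi(X(t))|\lesssim\eta$. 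Splitting $G(r)$ through $\pi(X(t))$, the two Lipschitz remainders are bounded deterministically by $C\eta^m T$, while the middle term is dominated by $\sum_{k=1}^K \int_0^T|D^hf(t,\omega+rh,q_k)-D^hf(t,\omega,q_k)|^m dt$. Each summand equals $\|\tau_{rh}F_{q_k}-F_{q_k}\|_{L^m([0,T])}^m$ for the \emph{fixed} $L^m([0,T])$–valued random variable $F_{q_k}:=D^hf(\cdot,\cdot,q_k)$ and $\tau_{rh}F(\omega):=F(\omega+rh)$, which tends to $0$ in probability as $r\to0$ by continuity of the Cameron–Martin translation (the Banach–valued version of Proposition \ref{proposition:ContinuityCamMartin}, applied with $E=L^m([0,T])$); there are finitely many $k$. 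Letting first $r\to0$, then $\eta\to0$, then $N\to\infty$ (so that $\bP(A_N^c)\to0$, since $\|X\|_\infty<\infty$ a.s.) yields $G(r)\xrightarrow{\bP}0$. This freezing step is the main obstacle: translation–continuity is available only for a fixed spatial argument, whereas $x=X(t)(\omega)$ varies in both $t$ and $\omega$, so it must be transferred onto a finite rational grid via the deterministic Lipschitz bound and the truncation before the termwise limit can be taken.

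For (b) I use the Lipschitz property once more to dominate
$$
\int_0^T|D^hf(t,\omega+rh,X(t)(\omega))|^m dt\ \lesssim\ G_0(\omega+rh)+\|X\|_\infty^m,\qquad G_0(\omega):=\int_0^T\Big(\int_0^t|D_sf(t,\omega,0)|^2ds\Big)^{m/2}dt,
$$
and choose the exponent $\kappa:=\sqrt{p/m}>1$. By the Cameron–Martin formula (Proposition \ref{proposition:CamMarForm1}) and Hölder with conjugate exponents $\kappa$ and $\kappa/(\kappa-1)$,
$$
\bE\big[G_0(\omega+rh)^{\kappa}\big]=\bE\big[G_0^{\kappa}\,\cE(r\dot h)(T)\big]\le \bE\big[G_0^{\kappa^2}\big]^{1/\kappa}\,\bE\big[\cE(r\dot h)(T)^{\kappa/(\kappa-1)}\big]^{(\kappa-1)/\kappa},
$$
where $\kappa^2=p/m$ makes the first factor finite by Assumption \ref{Assumption:f}(i), the second factor is finite and bounded over $r\in[0,1]$ by Proposition \ref{proposition:CamMarForm1}, and $\bE[\|X\|_\infty^{\kappa m}]<\infty$ because $\kappa m=\sqrt{mp}<p$ together with $\bE[\|X\|_\infty^p]<\infty$ from Theorem \ref{theorem:ExistenceUniquenessMao}. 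With the symmetric unshifted estimate this gives $\sup_{r\le1}\bE[G(r)^{\kappa}]<\infty$ for $\kappa>1$, hence uniform integrability; this is exactly the Cameron–Martin/stochastic–exponential computation already used in Lemma \ref{lemma:MallFinSecMom1}. Combining (a) and (b) through Vitali's theorem gives $\bE[G(r)]\to0$, which by the reduction of the first paragraph completes the proof of \eqref{eq:MallDerivprelimConverge}.
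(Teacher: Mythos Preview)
Your proof is correct, but it proceeds quite differently from the paper's in both of its two steps.

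For the convergence $G(r)\xrightarrow{\bP}0$, the paper takes a much shorter route: it applies Proposition \ref{proposition:ContinuityCamMartin} to the \emph{single} $L^m([0,T])$-valued functional $\omega\mapsto D^hf(\cdot,\omega,X(\cdot)(\omega))$, which when shifted by $\varepsilon h$ moves \emph{both} occurrences of $\omega$ and yields \eqref{eq:MallDerivprelimConverge1}; it then uses the spatial Lipschitz bound together with $\|X(\omega+\varepsilon h)-X(\omega)\|_\infty\xrightarrow{\bP}0$ to ``un-shift'' the argument in $X$ (this is \eqref{eq:MallDerivprelimConverge2}), and the triangle inequality gives the claim. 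Your freezing-on-a-grid argument reaches the same conclusion but is more laborious; the paper's trick of shifting everything and then peeling off the $X$-shift is worth noting.

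For the passage from $G(r)\xrightarrow{\bP}0$ to the Ces\`aro average, the paper simply writes ``using the Fundamental Theorem of Calculus'' and moves on. Your route---upgrading to $\bE[G(r)]\to0$ via uniform integrability (the Cameron--Martin/H\"older computation you cite from Lemma \ref{lemma:MallFinSecMom1}) and then averaging---is more explicit and fully justifies this step; convergence in probability alone does not in general transfer to pathwise Ces\`aro means without some control of the type you supply. In that sense your argument is more self-contained, at the cost of reproducing an integrability estimate that the paper has essentially already proved elsewhere.
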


\begin{proof}
By Proposition \ref{proposition:ContinuityCamMartin}, we know that for any $\delta>0$ that
\begin{equation}
\label{eq:MallDerivprelimConverge1}
\lim_{\varepsilon \to 0} \bP\Big[ \int_0^T \Big| D^hf(t, \omega + \varepsilon h, X(t)(\omega+\varepsilon h)) - D^hf(t, \omega, X(t)(\omega))\Big|^m dt > \delta \Big] = 0. 
\end{equation}
Similarly
$$
\lim_{\varepsilon \to 0} \bP\Big[ \| X(\omega+\varepsilon h) - X(\omega)\|_\infty > \delta \Big] = 0,
$$
so by Lipschitz continuity of $Df$ we also have
\begin{equation}
\label{eq:MallDerivprelimConverge2}
\lim_{\varepsilon \to 0} \bP\Big[ \int_0^T \Big| D^hf(t, \omega + \varepsilon h, X(t)(\omega+\varepsilon h)) - D^hf(t, \omega+\varepsilon h, X(t)(\omega))\Big|^m dt > \delta \Big] = 0. 
\end{equation}
Combining Equations \eqref{eq:MallDerivprelimConverge1} and \eqref{eq:MallDerivprelimConverge2}, we conclude
$$
\lim_{\varepsilon \to 0} \bP\Big[ \int_0^T \Big| D^hf(t, \omega + \varepsilon h, X(t)(\omega)) - D^hf(t, \omega, X(t)(\omega))\Big|^m dt > \delta \Big] = 0.
$$
Next, using the Fundamental Theorem of Calculus, we also have
$$
\lim_{\varepsilon \to 0} \bP\Big[ \int_0^T \Big| \frac{1}{\varepsilon} \int_0^\varepsilon D^hf\big(t, \omega + r h, X(t)(\omega)\big)dr - D^hf\big(t, \omega, X(t)(\omega)\big)\Big|^m dt > \delta \Big] = 0.
$$

\end{proof}

The next result establishes the Strong Stochastic G\^ateaux differentiability, see Definition \ref{def:Reveillacs results}. 
\begin{lemma}
\label{lemma:MallDerivConverge}
Let $m\in\{ 1, 2\}$ and $p>2$. Let $X$ be solution to the SDE \eqref{eq:SDE} under Assumption \ref{Assumption:1}. Let $f$ satisfy Assumption \ref{Assumption:f}. Then for $h\in H$
\begin{align*}
\lim_{\varepsilon\to0} \bE\Big[ \Big( \int_0^T \Big| \frac{f(t, \omega+\varepsilon h, X(t)(\omega)) - f(t, \omega, X(t)(\omega))}{\varepsilon} - D^h f(t, \omega, X(t)(\omega))\Big|^m dt\Big)^{\tfrac{1}{m}} \Big] = 0.
\end{align*}
\end{lemma}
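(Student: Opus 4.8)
The plan is to upgrade the convergence-in-probability statement of Lemma \ref{lemma:MallDerivprelimConverge} to convergence in mean by establishing uniform integrability, exactly in the spirit of Corollary \ref{corollary:HowToRAC}. Introduce the shorthand
\[
G_\varepsilon(\omega) := \Big( \int_0^T \Big| \frac{f(t, \omega+\varepsilon h, X(t)(\omega)) - f(t, \omega, X(t)(\omega))}{\varepsilon} - D^h f(t, \omega, X(t)(\omega))\Big|^m dt\Big)^{\tfrac{1}{m}},
\]
so that the goal is precisely $\lim_{\varepsilon\to0}\bE[G_\varepsilon]=0$. First I would use the indistinguishability identity of Lemma \ref{lemma:MallCalcIndistinguishability1} to rewrite the difference quotient as $\tfrac1\varepsilon\int_0^\varepsilon D^hf(t,\omega+rh,X(t)(\omega))\,dr$, after which $G_\varepsilon$ is exactly the quantity inside the probability in Lemma \ref{lemma:MallDerivprelimConverge}. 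That lemma then gives $G_\varepsilon\xrightarrow{\bP}0$ as $\varepsilon\searrow0$.

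Next I would establish a uniform second-moment bound, since a family bounded in $L^2$ is uniformly integrable. By Minkowski's inequality in $L^m([0,T])$ one has $G_\varepsilon\le A_\varepsilon+B$, where
\[
A_\varepsilon := \Big( \int_0^T \Big| \frac{f(t,\omega+\varepsilon h,X(t)) - f(t,\omega,X(t))}{\varepsilon}\Big|^m dt\Big)^{\tfrac1m},\qquad B := \Big( \int_0^T |D^hf(t,\omega,X(t))|^m dt\Big)^{\tfrac1m}.
\]
Lemma \ref{lemma:MallFinSecMom1} gives $\sup_{\varepsilon\le1}\bE[A_\varepsilon^2]<\infty$. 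For $B$, writing $D^hf=\langle Df,h\rangle=\int_0^t D_sf(t,\omega,X(t))\dot h(s)\,ds$ and applying Cauchy--Schwarz yields $|D^hf(t,\omega,X(t))|\le\|\dot h\|_2\big(\int_0^t|D_sf(t,\omega,X(t))|^2ds\big)^{1/2}$, so that $B\le\|\dot h\|_2\big(\int_0^T(\int_0^t|D_sf|^2ds)^{m/2}dt\big)^{1/m}$; Lemma \ref{Lemma:FinMomMalDeriv:b+sigma} then furnishes $\bE[B^p]<\infty$ with $p>2$, and hence $\bE[B^2]<\infty$. Combining via $(a+b)^2\le2a^2+2b^2$ gives $\sup_{\varepsilon\le1}\bE[G_\varepsilon^2]<\infty$.

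Finally, boundedness in $L^2$ makes $\{G_\varepsilon\}_{\varepsilon\le1}$ uniformly integrable, and uniform integrability together with the convergence $G_\varepsilon\xrightarrow{\bP}0$ forces $\bE[G_\varepsilon]\to0$, which is the assertion. I expect no substantive analytic obstacle to remain here: the heavy estimates have already been carried out in the two preceding lemmas, which were tailored so that the two moment contributions reduce to finite-moment statements. The only points requiring care are the correct application of Minkowski's inequality for the $L^m$-norm (covering both $m=1$ and $m=2$) and the Cauchy--Schwarz passage from the directional derivative $D^hf$ to the kernel $D_sf$; the mild subtlety is simply that $G_\varepsilon$ need not converge in $L^m$ for the norm itself, so the argument must be run on the scalar random variables $G_\varepsilon$ rather than inside the time integral.
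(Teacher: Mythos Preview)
Your proposal is correct and follows essentially the same route as the paper's proof: rewrite the difference quotient via Lemma \ref{lemma:MallCalcIndistinguishability1}, invoke Lemma \ref{lemma:MallDerivprelimConverge} for convergence in probability, use Lemmas \ref{lemma:MallFinSecMom1} and \ref{Lemma:FinMomMalDeriv:b+sigma} to obtain an $L^2$ bound (hence uniform integrability), and conclude $L^1$ convergence. The paper states the uniform-integrability step more tersely, but your explicit Minkowski/Cauchy--Schwarz decomposition into $A_\varepsilon+B$ is exactly what underlies it.
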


\begin{proof}
First, using Lemma \ref{lemma:MallCalcIndistinguishability1}, we have $\bP$-almost surely that
\begin{align*}
\int_0^T \Big|& \frac{f(t, \omega+\varepsilon h, X(t)(\omega)) - f(t, \omega, X(t)(\omega))}{\varepsilon} - D^h f(t, \omega, X(t)(\omega))\Big|^m dt
\\
&\qquad =\int_0^T \Big| \frac{1}{\varepsilon} \int_0^\varepsilon D^hf\big(t, \omega + r h, X(t)(\omega)\big)dr - D^h f(t, \omega, X(t)(\omega))\Big|^m dt.
\end{align*}
By Lemma \ref{lemma:MallDerivprelimConverge}, both sides converge to $0$ in probability (as $\varepsilon \to 0$). 

Next, by Lemma \ref{Lemma:FinMomMalDeriv:b+sigma} and Lemma \ref{lemma:MallFinSecMom1}, we have uniform $L^1$ integrability of this collection of random variables since they are bounded in $L^2$. Convergence in probability and Uniform Integrability imply convergence in mean. 
\end{proof}
 
\subsubsection*{Proof of Theorem \ref{theo:WhatsHere}}

\begin{proof}[Proof of Theorem \ref{theo:WhatsHere}]
The difference between Assumptions \ref{Assumption:2} and Assumptions \ref{Assumption:3} is (iii') and (iv'). Here we verify that $b$ and $\sigma$ satisfying Assumption \ref{Assumption:3} implies Assumptions \ref{Assumption:2}.  

Lemma \ref{Lemma:FinMomMalDeriv:b+sigma} implies Assumptions \ref{Assumption:2} (iii) is satisfied. Lemma \ref{lemma:MallDerivConverge} implies Assumptions \ref{Assumption:2} (iv) is satisfied. In this case, the identification $U,V$ with $Db$ and $D\sigma$ respectively is straightforward.This also means that the Existence proof in Theorem \ref{theo:ExistenceUniqueness for Candidate DX} holds so a solution to the SDE \eqref{eq:SDEMallDeriv2} must exist. 
\end{proof}

\section{Parametric differentiability }
\label{sec:Parametric differentiability}

In this section, we study the differentiability properties of solutions of SDEs with respect to the initial condition. For a detailed exploration of the subject of Stochastic flows, see \cite{Kunita1990}. The main contribution of this section is to prove similar results for SDEs with only locally Lipschitz and monotone coefficients as opposed to previous results which rely on a Lipschitz condition. Similar problems have been studied in \cite{RIEDEL2017283}, \cite{Cerrai2001}*{Chapter 1} and \cite{Zhang2016}.

\subsection{G\^ateaux and Frech\'et Differentiability of monotone SDEs}
We start by recalling the concept of G\^ateaux and Frech\'et Differentiability for abstract Banach Spaces.
\begin{definition}[G\^ateaux and Frech\'et Differentiability]
\label{definition:Frechet}
Let $V$ and $W$ be Banach spaces and let $U$ be an open subset of $V$. Let $f:U \to W$.  The map $f$ is \emph{G\^ateaux differentiable} at $x\in U$ in direction $h\in V$ if the limit
\begin{align*}
\lim_{\varepsilon \to 0} \frac{f(x+\varepsilon h) - f(x)}{\varepsilon} = \frac{d}{d\varepsilon} f(x + \varepsilon h),
\end{align*}
exists. The limit is called the G\^ateaux derivative in direction $h$.

The map $f$ is said to be Frech\'et differentiable at $x\in U$ if there exists a bounded linear operator $A:U\to W$ such that 
\begin{align*}
\lim_{\|h\|_V \to 0} \frac{\| f(x+h) - f(x) - Ah \|_W}{\|h\|_V} = 0.
\end{align*}
The linear operator $A$ is called the Frech\'et derivative of $f$ at $x$
\end{definition}

Let $X_\theta$ be the solution of SDE \eqref{eq:SDE}. We next show that the map $ \theta \in L^p(\cF_0; \bR^d; \bP) \mapsto X_\theta(\cdot) \in \cS^p([0,T])$ is Frech\'et differentiable.  As we will be differentiating with respect to $\theta$ for this section, we emphasize the dependency on $\theta$. 

\begin{assumption}
\label{Assumption:Frechet}
Let $b: [0,T]\times \Omega\times \bR^d \to \bR^d$ and $\sigma: [0,T] \times \Omega\times\bR^d \to \bR^{d\times m}$ satisfy Assumption \ref{Assumption:1} for some $p\geq2$. Further, suppose
\begin{enumerate}[(i)]
\item For almost all $(t, \omega)\in [0,T]\times \Omega$ we have the functions $\sigma(t, \omega, \cdot)$ and $b(t, \omega, \cdot)$ have partial derivatives in all directions. 
\item For all  $x\in \bR^d$, we have that the maps 
\begin{align*}
x \mapsto \int_0^T \Big| \nabla_x \sigma(t, \omega, x) \Big|^2 dt
\quad\textrm{and}\quad 
x \mapsto \int_0^T \Big| \nabla_x b(t, \omega, x) \Big|^2 dt
\quad \textrm{are $\bP$-almost surely continuous.}
\end{align*}
 
\end{enumerate}
\end{assumption}

\begin{theorem}
\label{theorem:Frechet}
Let $p\geq 2$ and let $1\leq q<p$.  
Let $X_\theta$ be the solution of SDE \eqref{eq:SDE} under Assumption \ref{Assumption:Frechet} in $\cS^q$. Then the map $\theta \to X_\theta$ is G\^ateaux Differentiable in direction $h$ and the derivative is equal to $F[h]$ the solution of the SDE \eqref{eq:SDEFrechet}

Further, the operator $F:L^p(\cF_0; \bR^d; \bP) \to S^q([0,T])$ is the Frech\'et derivative. 
\end{theorem}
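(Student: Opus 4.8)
The plan is to transcribe the three-step architecture of the Malliavin argument---candidate derivative, uniform moment bound, convergence in probability upgraded by uniform integrability---into the parametric setting. First I would take $F[h]$ to be the unique $\cS^q$-solution of the linear SDE \eqref{eq:SDEFrechet}; with vanishing additive coefficients and initial datum $h$ this is an instance of \eqref{eq:SDEDeriv} with $B(s,\omega)=\nabla_x b(s,\omega,X_\theta(s))$ and $\Sigma(s,\omega)=\nabla_x\sigma(s,\omega,X_\theta(s))$. Assumption \ref{Assumption:linear} is then met: differentiating the one-sided Lipschitz (monotonicity) condition of Assumption \ref{Assumption:1} yields $x^{T}\nabla_x b(s,\omega,\cdot)\,x\le L|x|^2$, while the global Lipschitz bound on $\sigma$ forces $|\nabla_x\sigma|\le L$, whence $\int_0^T\|\Sigma\|_{L^\infty}^2\,dt\le L^2T<\infty$. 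Theorem \ref{theorem:ExistenceUniquenessMaoLinearSDE} then supplies existence, uniqueness, and the fundamental-matrix representation $F[h]=\Psi_\theta\,h$ with $\Psi_\theta$ as in \eqref{eq:LinearSDEFundamental}. Linearity of $h\mapsto F[h]$ is immediate from this representation, and boundedness $L^p\to\cS^q$ follows from the moment bound of Theorem \ref{theorem:ExistenceUniquenessMaoLinearSDE} together with $\|h\|_{L^q}\le\|h\|_{L^p}$ (Jensen, using $q<p$).

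For the G\^ateaux statement I would set $P_\varepsilon=(X_{\theta+\varepsilon h}-X_\theta)/\varepsilon$ and $Y_\varepsilon=P_\varepsilon-F[h]$. Stochastic stability (Theorem \ref{theorem:ExistenceUniquenessMao}) yields $\bE[\|P_\varepsilon\|_\infty^p]\lesssim\bE[|h|^p]$ uniformly in $\varepsilon$, which plays the role of Proposition \ref{proposition:MallCalcRayAbsoluteCont} but is obtained directly. Writing the drift increment as $b(X_{\theta+\varepsilon h})-b(X_\theta)=\big(\int_0^1\nabla_x b(\Xi)\,d\xi\big)\varepsilon P_\varepsilon$ with $\Xi(s)=X_\theta(s)+\xi(X_{\theta+\varepsilon h}(s)-X_\theta(s))$, and likewise for $\sigma$, the process $Y_\varepsilon$ solves
\begin{align*}
Y_\varepsilon(t)=A_\varepsilon(t)+\int_0^t\nabla_x b(s,\omega,X_\theta(s))Y_\varepsilon(s)\,ds+\int_0^t\nabla_x\sigma(s,\omega,X_\theta(s))Y_\varepsilon(s)\,dW(s),
\end{align*}
where $A_\varepsilon$ collects the terms $\big[\int_0^1\nabla_x b(\Xi)\,d\xi-\nabla_x b(X_\theta)\big]P_\varepsilon$ and $\big[\int_0^1\nabla_x\sigma(\Xi)\,d\xi-\nabla_x\sigma(X_\theta)\big]P_\varepsilon$ integrated in $ds$ and $dW$ respectively. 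Exactly as for \eqref{eq:MallDerivConProb4}--\eqref{eq:MallDerivConProb5}, continuity of $\nabla_x b,\nabla_x\sigma$ (Assumption \ref{Assumption:Frechet}), the convergence $X_{\theta+\varepsilon h}\to X_\theta$, and the uniform moment bound on $P_\varepsilon$ give $\|A_\varepsilon\|_\infty\xrightarrow{\bP}0$ (the $\sigma$-part even in mean, via $|\nabla_x\sigma|\le L$ and Burkholder--Davis--Gundy). Proposition \ref{Proposition:GronwallConProb} then forces $\|Y_\varepsilon\|_\infty\xrightarrow{\bP}0$, and since the uniform $L^p$-bound makes the family $(\|P_\varepsilon\|_\infty^q)_{\varepsilon\le1}$ uniformly integrable for $q<p$, this upgrades to convergence in $\cS^q$, i.e.~G\^ateaux differentiability with derivative $F[h]$.

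For Fr\'echet differentiability I would observe that the preceding construction is valid at every base point $\theta'\in L^p$, producing a G\^ateaux derivative $F_{\theta'}\in L(L^p,\cS^q)$ with $F_{\theta'}[h]=\Psi_{\theta'}h$, and then invoke the classical criterion that a map which is G\^ateaux differentiable on a neighbourhood with derivative continuous at $\theta$ in operator norm is Fr\'echet differentiable there. The representation $F_{\theta'}[h]=\Psi_{\theta'}h$ is decisive because it factors out $h$: by H\"older with exponents $p/(p-q)$ and $p/q$,
\begin{align*}
\|F_{\theta'}-F_\theta\|_{L(L^p,\cS^q)}\lesssim\|\Psi_{\theta'}-\Psi_\theta\|_{\cS^{s}},\qquad s=\frac{pq}{p-q},
\end{align*}
reducing the uniform-over-directions estimate to a single $h$-free continuous-dependence statement. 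That statement I would prove as in the G\^ateaux step: $\Psi_{\theta'}-\Psi_\theta$ solves a linear SDE with the same homogeneous structure forced by $(\nabla_x b(X_{\theta'})-\nabla_x b(X_\theta))\Psi_\theta$ and its $\sigma$-analogue, and since $\Psi_\theta$ has finite moments of all orders (from the one-sided bound on $\nabla_x b$ and boundedness of $\nabla_x\sigma$), Proposition \ref{Proposition:GronwallConProb} gives convergence in probability as $\theta'\to\theta$ in $L^p$, upgraded to $\cS^s$ by uniform integrability. I expect this Fr\'echet step to be the main obstacle: the forcing pairs the super-linearly growing $\nabla_x b$ with $\Psi_\theta$ and so need not be integrable, while Fr\'echet differentiability simultaneously demands uniformity over the direction $h$. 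Both difficulties are circumvented at once by factoring through the $h$-free fundamental matrix $\Psi_\theta$ and appealing to the convergence-in-probability Gr\"onwall inequality rather than to a mean-square estimate.
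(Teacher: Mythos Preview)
Your first two paragraphs---the candidate $F[h]$ via Theorem~\ref{theorem:ExistenceUniquenessMaoLinearSDE}, its linearity and boundedness through the fundamental matrix, the decomposition of $Y_\varepsilon$ and the appeal to Proposition~\ref{Proposition:GronwallConProb} followed by an $L^p$-to-$\cS^q$ uniform-integrability upgrade---match the paper's Theorem~\ref{thm:FrechetExistence}, Proposition~\ref{proposition:FrechetLinear}, and Theorem~\ref{Theorem:FrechetDerivative} essentially line for line.

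Where you diverge is the Fr\'echet step. The paper does not prove G\^ateaux first and then upgrade through continuity of $\theta'\mapsto F_{\theta'}$; instead it observes that nothing in the convergence-in-probability argument for $Y_\varepsilon$ depends on the \emph{direction} of the perturbation, only on its $L^p$-norm. One may therefore replace $\varepsilon h$ (with $\varepsilon\to0$) by a general increment $h$ (with $\|h\|_{L^p}\to0$), and the identical proof---stochastic stability gives $X_{\theta+h}\to X_\theta$, continuity of $\nabla_x b,\nabla_x\sigma$ gives $\|A_h\|_\infty\to0$ in probability, Proposition~\ref{Proposition:GronwallConProb} closes, and the uniform $p$th-moment bound $\bE[\|X_{\theta+h}-X_\theta\|_\infty^p]\lesssim\|h\|_{L^p}^p$ provides uniform integrability---yields directly $\|X_{\theta+h}-X_\theta-F[h]\|_{\cS^q}=o(\|h\|_{L^p})$, which \emph{is} the Fr\'echet condition. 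Your detour through operator-norm continuity of $\Psi_{\theta'}$ is correct (the H\"older exponent $s=pq/(p-q)$ is right, and $\Psi_\theta$ indeed has moments of all orders uniformly in $\theta$), and it has the virtue of making the fundamental-matrix factorisation explicit; but it proves an auxiliary continuity lemma that the paper never needs. The paper's route is shorter precisely because the ``uniformity over directions'' that Fr\'echet demands is already implicit in the norm-based estimates used for G\^ateaux.
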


\begin{remark}
It is important to note that we were unable to prove G\^ateaux Differentiability in the Banach space $\cS^p$. Convergence in $\cS^p$ would be equivalent to uniform integrability of the random variable 
$$
\Big\| \frac{ X_{\theta+h} - X_\theta - F[h] }{\|h\|_{L^p(\cF_0; \bR^d; \bP)}} \Big\|_\infty^p,
$$
over all possible choices of $h\in L^p(\cF_0; \bR^d; \bP)$. Unlike in the case where the coefficients are Lipschitz, see \cite{crisan2017smoothing}, this is not true. 
\end{remark}

The proof is given after several intermediary results. The first results relates to G\^ateaux differentiability and its properties, we address the Frech\'et differentiability afterwards.  For the proof once one has established G\^ateaux differentiability, extending to Frech\'et differentiability is remarkably easy. G\^ateaux differentiability is the weaker condition and is usually considered the easier property to prove.

\subsubsection*{Existence and Uniqueness for the candidate process}

\begin{theorem}
\label{thm:FrechetExistence}
Let $p\geq2$ and suppose Assumption \ref{Assumption:Frechet} holds. Let $X_\theta$ be the solution to \eqref{eq:SDE}. Let $h \in L^p( \cF_0; \bR^d; \bP)$. Then the SDE
\begin{equation}
\label{eq:SDEFrechet}
F(t)[h] = h + \int_0^t \nabla_x b\Big(s, \omega, X_\theta(s)(\omega)\Big) F(s)[h]ds + \int_0^t \nabla_x \sigma\Big(s, \omega, X_\theta(s)(\omega)\Big) F(s)[h] dW(s),
\end{equation}
has a unique solution in $\cS^p([0,T]; \bR^d)$. 
\end{theorem}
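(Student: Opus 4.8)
The plan is to recognise \eqref{eq:SDEFrechet} as a \emph{homogeneous} linear SDE of the form \eqref{eq:SDEDeriv} and then to invoke Theorem \ref{theorem:ExistenceUniquenessMaoLinearSDE}. Concretely, I would set $\theta := h$, take the inhomogeneous terms to vanish, i.e.\ $b(\cdot,\cdot)\equiv 0$ and $\sigma(\cdot,\cdot)\equiv 0$, and identify the linear coefficients as
\[
B(s,\omega):=\nabla_x b\big(s,\omega,X_\theta(s)(\omega)\big),\qquad \Sigma(s,\omega):=\nabla_x\sigma\big(s,\omega,X_\theta(s)(\omega)\big).
\]
With these choices \eqref{eq:SDEDeriv} collapses to exactly \eqref{eq:SDEFrechet}, whose unique solution in $\cS^p$ will be the candidate $F(\cdot)[h]$. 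It therefore remains only to check that $B$, $\Sigma$ and the (zero) inhomogeneities satisfy Assumption \ref{Assumption:linear}. Progressive measurability of $B$ and $\Sigma$ follows from the adaptedness and continuity of $X_\theta$ together with the measurability and spatial continuity of $\nabla_x b$ and $\nabla_x \sigma$ granted by Assumption \ref{Assumption:Frechet}(i)--(ii); that $h\in L^p(\cF_0;\bR^d;\bP)$ is independent of $W$ is the hypothesis; and the integrability conditions on the inhomogeneous terms hold trivially since they are identically zero. For $\Sigma$, the uniform Lipschitz bound on $\sigma$ from Assumption \ref{Assumption:1} forces $\nabla_x\sigma$ to be bounded (by a dimensional multiple of $L$) uniformly in its spatial argument, so $\int_0^T \|\Sigma(t,\cdot)\|_{L^\infty}^2\,dt<\infty$.

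The only step requiring an actual argument rather than bookkeeping is the one-sided bound $x^T B(s,\omega)x\leq L|x|^2$, which I would derive from the monotonicity (one-sided Lipschitz) condition on $b$ in Assumption \ref{Assumption:1}. Fixing $z\in\bR^d$ and a direction $v$, applying that condition to the pair $z+\varepsilon v$, $z$ gives $\langle \varepsilon v,\,b(s,\omega,z+\varepsilon v)-b(s,\omega,z)\rangle\leq L\varepsilon^2|v|^2$; dividing by $\varepsilon^2$ and sending $\varepsilon\to0$ (the limit existing by Assumption \ref{Assumption:Frechet}(i)) yields $v^T\nabla_x b(s,\omega,z)v\leq L|v|^2$. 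Evaluating at $z=X_\theta(s)(\omega)$ gives precisely the required control of $B$, up to replacing $L$ by any strictly larger constant should the strict inequality in Assumption \ref{Assumption:linear} be read literally.

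With Assumption \ref{Assumption:linear} verified, Theorem \ref{theorem:ExistenceUniquenessMaoLinearSDE} delivers a unique solution $F(\cdot)[h]\in\cS^p([0,T];\bR^d)$ (indeed with the explicit fundamental-matrix representation \eqref{eq:LinearSDEFundamental}), which completes the proof. I expect the derivation of the one-sided bound on $\nabla_x b$ to be the main --- though modest --- obstacle; everything else is a direct translation of the Lipschitz and integrability hypotheses already available.
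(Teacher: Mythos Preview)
Your proposal is correct and follows essentially the same route as the paper: both reduce \eqref{eq:SDEFrechet} to the linear SDE framework \eqref{eq:SDEDeriv} with $B=\nabla_x b(\cdot,\cdot,X_\theta)$, $\Sigma=\nabla_x\sigma(\cdot,\cdot,X_\theta)$ and zero inhomogeneities, then invoke Theorem~\ref{theorem:ExistenceUniquenessMaoLinearSDE} after checking Assumption~\ref{Assumption:linear}. Your explicit derivation of the one-sided bound $v^T\nabla_x b(s,\omega,z)v\leq L|v|^2$ via differentiating the monotonicity inequality is exactly the ``straightforward'' step the paper leaves to a footnote, so if anything you are slightly more thorough.
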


\begin{proof}
This just follows from Theorem \ref{theorem:ExistenceUniquenessMaoLinearSDE}. We simply verify that Assumption \ref{Assumption:linear} holds:
\begin{enumerate}
\item $|\nabla_x \sigma|<L$ by the Lipschitz property. Therefore, clearly $\bE\Big[ \int_0^T |\nabla_x \sigma(s, \omega, X_\theta(s))|^2 ds \Big]<\infty$. 
\item From the differentiability and the monotonicity property of $b$, we have that $\nabla_x b$ is $\bP$-almost surely negative semidefinite\footnote{We do not prove this fact; it is straightforward using inner products and the definition of derivative.}. Therefore, for $z\in\bR^d$
\begin{align*}
z^T \Big( \int_0^T \nabla_x b(s, \omega, X_\theta(s)) ds \Big) z \leq \int_0^T L |z|^2 ds \leq LT |z|^2,
\end{align*}
\end{enumerate}
Hence, using the moment estimates we conclude that 
$\bE\big[ \|F[h]\|_{\infty}^p\big] \lesssim \|h\|_{L^p(\cF_0; \bR^d; \bP)}^p.$
\end{proof}

Unlike with the Malliavin Derivative, the SDE \eqref{eq:SDEFrechet} is not a general linear stochastic differential equation. As $b$ and $\sigma$ do not have dependency on $\theta$, we do not have extra terms akin to the Malliavin derivatives $Db$ and $D\sigma$. This means that, unlike the Malliavin Derivative, $F$ has finite moments of all orders provided the initial condition has adequate integrability. 

\begin{proposition}
\label{proposition:FrechetLinear}
Let $p\geq 2$. Suppose Assumption \ref{Assumption:Frechet}. Let $X_\theta$ be the solution to \eqref{eq:SDE}. The operator $F: L^p(\cF_0; \bR^d; \bP) \to \cS^p([0,T])$ defined by $h\mapsto F[h]$ the solution of Equation \eqref{eq:SDEFrechet}, is a bounded linear operator
$
\| F[h] \|_{\cS^p} \lesssim \| h \|_{L^p(\cF_0; \bR^d; \bP)}.
$ 
\end{proposition}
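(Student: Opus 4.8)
The plan is to recognise that the substantive analytic work has already been carried out in Theorem \ref{thm:FrechetExistence}, so that proving this proposition amounts to two lightweight tasks: reading off the operator bound and verifying linearity. First I would observe that the SDE \eqref{eq:SDEFrechet} is exactly of the template \eqref{eq:SDEDeriv} with vanishing additive coefficients (the objects called $b$ and $\sigma$ in \eqref{eq:SDEDeriv} are here identically zero), linear drift $B(s,\omega)=\nabla_x b(s,\omega,X_\theta(s)(\omega))$ and linear diffusion $\Sigma(s,\omega)=\nabla_x \sigma(s,\omega,X_\theta(s)(\omega))$. That these coefficients satisfy Assumption \ref{Assumption:linear} was already checked inside the proof of Theorem \ref{thm:FrechetExistence}: the Lipschitz bound on $\nabla_x\sigma$ furnishes the required square-integrability of $\Sigma$, while the negative semidefiniteness of $\nabla_x b$ supplies the one-sided bound on $B$. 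Hence Theorem \ref{theorem:ExistenceUniquenessMaoLinearSDE} applies verbatim.

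For the boundedness estimate I would simply invoke the moment bound concluding Theorem \ref{thm:FrechetExistence}, namely $\bE\big[\|F[h]\|_\infty^p\big] \lesssim \|h\|_{L^p(\cF_0; \bR^d; \bP)}^p$. Taking $p$-th roots rewrites this as $\|F[h]\|_{\cS^p} \lesssim \|h\|_{L^p(\cF_0; \bR^d; \bP)}$, which is precisely the claimed inequality; the implied constant depends only on $p$, $T$ and the structural constant $L$ from Assumption \ref{Assumption:linear}, and in particular is independent of $h$. This already shows that $F$ maps $L^p(\cF_0; \bR^d; \bP)$ into $\cS^p([0,T])$ continuously.

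For linearity the cleanest route is the explicit representation afforded by Theorem \ref{theorem:ExistenceUniquenessMaoLinearSDE}: since the additive terms vanish, the solution collapses to $F(t)[h] = \Psi(t)\,h$, where $\Psi$ is the fundamental matrix \eqref{eq:LinearSDEFundamental} constructed from $B$ and $\Sigma$ alone and therefore carries no dependence on $h$. As $h \mapsto \Psi(t)\,h$ is a (random) matrix acting linearly on the $\cF_0$-measurable vector $h$, one obtains $F[\alpha h_1 + \beta h_2] = \alpha F[h_1] + \beta F[h_2]$ for scalars $\alpha,\beta$ and $h_1, h_2 \in L^p(\cF_0; \bR^d; \bP)$. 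Should one prefer to avoid the explicit form, linearity follows equally from uniqueness: linearity of \eqref{eq:SDEFrechet} in the solution variable makes $\alpha F[h_1] + \beta F[h_2]$ a solution with initial datum $\alpha h_1 + \beta h_2$, and the uniqueness in Theorem \ref{thm:FrechetExistence} forces it to coincide with $F[\alpha h_1 + \beta h_2]$. I anticipate no genuine obstacle here; the only point deserving a moment's attention is that the fundamental matrix $\Psi$ is common to every $h$, which is exactly what renders the dependence on the initial condition linear, and this is transparent from \eqref{eq:LinearSDEFundamental}.
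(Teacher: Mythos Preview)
Your proposal is correct and essentially matches the paper's proof: boundedness is read off directly from the moment estimate in Theorem \ref{thm:FrechetExistence}, and linearity is deduced from the structure of the linear SDE. The only cosmetic difference is that the paper argues linearity via uniqueness (exactly your alternative route) rather than via the explicit form $F(t)[h]=\Psi(t)h$; both are equally valid and equally short.
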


\begin{proof}
Firstly, we show that $F[0](\cdot)=0_d$ a.s. ($0_d$ is the $\bR^d$-vector of zeros). Since $F[0]$ is the solution to the SDE
\begin{align*}
F(t)[0] = \int_0^t \nabla_x b(s, \omega, X_\theta(s)(\omega)) F(s)[0]ds + \int_0^t \nabla_x \sigma(s, \omega, X_\theta(s)(\omega)) F(s)[0] dW(s), 
\quad
F(0)[0] = 0
\end{align*}
and this SDE has a unique solution, we only need to show that $F[0](\cdot)=0_d$ is a solution. Clearly we have $\bP$-almost surely that
\begin{align*}
\int_0^t \nabla_x b(s, \omega, X_\theta(s)(\omega)) \cdot 0_d ds = 0 \quad \mbox{and} \quad \int_0^t \nabla_x \sigma(s, \omega, X_\theta(s)(\omega)) \cdot 0_d dW(s) = 0, 
\end{align*}
so this is immediate. 

Let $\lambda \in \bR$. Next we have
\begin{align*}
&F[h_1](t) + \lambda F[h_2](t)
\\
& = h_1 + \lambda h_2 + \int_0^t \nabla_x b(s, \omega, X_\theta(s)(\omega)) F[h_1](s)ds + \lambda \int_0^t \nabla_x b(s, \omega, X_\theta(s)(\omega)) F[h_2](s)ds 
\\
&
\qquad 
+ \int_0^t \nabla_x \sigma(s, \omega, X_\theta(s)(\omega)) F[h_1](s) dW(s) + \lambda \int_0^t \nabla_x \sigma(s, \omega, X_\theta(s)(\omega)) F[h_2](s) dW(s), 
\\
&\Big( F[h_1] + \lambda F[h_2]\Big)(t)
\\
& = (h_1 + \lambda h_2) + \int_0^t \nabla_x b(s, \omega, X_\theta(s)(\omega)) \Big( F[h_1](s) + \lambda F[h_2](s)\Big)(s) ds 
\\
&\qquad + \int_0^t \nabla_x \sigma(s, \omega, X_\theta(s)(\omega)) \Big( F[h_1](s) + \lambda F[h_1](s)\Big)(s) dW(s) , 
\end{align*}
which is the same as the SDE for $F[h_1+\lambda h_2]$. Hence, by existence and uniqueness, the two must be equal up to a null set. 

To prove boundedness we note that we have $\| F[h] \|_{\cS^p}  \lesssim \| h \|_{L^p(\cF_0; \bR^d; \bP)}$ from Theorem \ref{thm:FrechetExistence}.

\end{proof}

\subsubsection*{Differentiability of $\theta\mapsto X_{\theta}$}

It is immediate to prove the stochastic stability result that
$
\bE\big[ \| X_{\theta+h} - X_\theta\|_\infty^p \big]^{{1}/{p}} = O\big( \|h\|_{L^p} \big)
$ 
as $\|h\|_{L^p} \to 0$, see Theorem \ref{theorem:ExistenceUniquenessMao}. Hence we have
\begin{align*}
\lim_{\|h\|_{L^p} \to 0} \bE\big[ \| X_{\theta+h} - X_\theta\|_\infty^p\big] \to 0.
\end{align*}

\begin{theorem}
\label{Theorem:FrechetDerivative}
Let $p\geq2$ and $1\leq q<p$. Let $h\in L^p(\cF_0; \bR^d; \bP)$. Suppose we have Assumption \ref{Assumption:Frechet}, let $X_\theta$ be the solution of the SDE \eqref{eq:SDE} and let $F(t)[h]$ be the solution to the SDE \eqref{eq:SDEFrechet}. Then we have
\begin{align*}
\| X_{\theta+h} - X_\theta - F[h]\|_{\cS^q} = o\big(\|h\|_{L^p}\big),
\end{align*}
and therefore $F[h]$ is the G\^ateaux derivative of $X$. 
\end{theorem}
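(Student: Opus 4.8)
The plan is to study the remainder $R_h := X_{\theta+h}-X_\theta - F[h]$ and to show, after normalisation by $\|h\|_{L^p}$, that it tends to $0$ first \emph{in probability} and then---exploiting the strict gap $q<p$---in $\cS^q$. First I would write the SDE satisfied by the increment $P_h:=X_{\theta+h}-X_\theta$ and, using the fundamental theorem of calculus along the segment $\Xi_\xi(s):=X_\theta(s)+\xi P_h(s)$, represent the coefficient differences as
$$
b(s,\omega,X_{\theta+h}(s))-b(s,\omega,X_\theta(s)) = \Big(\int_0^1 \nabla_x b(s,\omega,\Xi_\xi(s))\,d\xi\Big) P_h(s),
$$
and analogously for $\sigma$. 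Subtracting the SDE \eqref{eq:SDEFrechet} for $F[h]$ then shows that $R_h$ solves a linear SDE of the form \eqref{eq:SDEDeriv} with coefficients $\nabla_x b(\cdot,\omega,X_\theta)$ (negative semidefinite up to the monotonicity constant $L$) and $\nabla_x\sigma(\cdot,\omega,X_\theta)$ (bounded, since $\sigma$ is Lipschitz), driven by the additive forcing
$$
A_h(t) = \int_0^t G_b(s)P_h(s)\,ds + \int_0^t G_\sigma(s)P_h(s)\,dW(s),
$$
where $G_b(s):=\int_0^1[\nabla_x b(s,\omega,\Xi_\xi(s))-\nabla_x b(s,\omega,X_\theta(s))]\,d\xi$ and $G_\sigma$ is defined in the same way.

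Next I would normalise. Writing $\hat P_h:=P_h/\|h\|_{L^p}$ and $\hat R_h:=R_h/\|h\|_{L^p}=\hat P_h-F[\hat h]$ with $\hat h:=h/\|h\|_{L^p}$, the stochastic stability estimate of Theorem \ref{theorem:ExistenceUniquenessMao} gives $\|\hat P_h\|_{\cS^p}\lesssim 1$, while linearity and boundedness of $F$ (Theorem \ref{thm:FrechetExistence} and Proposition \ref{proposition:FrechetLinear}) give $\|F[\hat h]\|_{\cS^p}=\|F[h]\|_{\cS^p}/\|h\|_{L^p}\lesssim 1$. Hence $\sup_{0<\|h\|_{L^p}\le 1}\|\hat R_h\|_{\cS^p}<\infty$; this uniform $\cS^p$ bound on the normalised remainder is precisely what will let me trade convergence in probability for convergence in $\cS^q$ at the end.

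The core of the argument is to show that the normalised forcing $\hat A_h:=A_h/\|h\|_{L^p}$ satisfies $\|\hat A_h\|_\infty\xrightarrow{\bP}0$ as $\|h\|_{L^p}\to0$. Since $\Xi_\xi-X_\theta=\xi\,\|h\|_{L^p}\,\hat P_h$ and $\hat P_h$ is bounded, hence tight, in $\cS^p$, the segment $\Xi_\xi$ converges to $X_\theta$ uniformly in probability; the continuity of $x\mapsto\int_0^T|\nabla_x b(t,\omega,x)|^2\,dt$ from Assumption \ref{Assumption:Frechet}(ii) then yields $\int_0^T|G_b(s)|\,ds\xrightarrow{\bP}0$ (localising on $\{\|X_\theta\|_\infty\le N\}$, exactly as in the treatment of the analogous term in the Malliavin proof), and multiplying by the tight factor $\|\hat P_h\|_\infty$ shows the drift part of $\hat A_h$ tends to $0$ in probability. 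For the diffusion part I would use that $\nabla_x\sigma$ is bounded, so $|G_\sigma|\le 2L$ and dominated convergence gives $\int_0^T|G_\sigma(s)|^2\,ds\xrightarrow{\bP}0$; a Burkholder--Davis--Gundy and stopping argument then controls the stochastic integral. Feeding $\|\hat A_h\|_\infty\xrightarrow{\bP}0$ into the Gr\"onwall inequality for the topology of convergence in probability (the method of Proposition \ref{Proposition:GronwallConProb}, applied to the linear variational SDE whose drift coefficient is monotone and whose diffusion coefficient is bounded) yields $\|\hat R_h\|_\infty\xrightarrow{\bP}0$.

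Finally I would upgrade this to the claimed $\cS^q$ statement: because $q<p$ and $\hat R_h$ is bounded in $\cS^p$ uniformly in $h$, the family $\{\|\hat R_h\|_\infty^q\}$ is bounded in $L^{p/q}$ with $p/q>1$ and hence uniformly integrable, so $\|\hat R_h\|_\infty\xrightarrow{\bP}0$ forces $\bE[\|\hat R_h\|_\infty^q]\to0$, i.e. $\|R_h\|_{\cS^q}=o(\|h\|_{L^p})$, identifying $F[h]$ as the G\^ateaux derivative. I expect the main obstacle to be the drift forcing: since $\nabla_x b$ has super-linear growth it need not be integrable against $X$ at the orders a mean-square argument would require, so it is essential to prove convergence of the forcing \emph{only in probability} and to defer all integrability to the closing uniform-integrability step, where the strict gap $q<p$ does the work. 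A secondary technical point is justifying the Gr\"onwall-in-probability step for the time- and $\omega$-dependent (rather than autonomous) coefficients of the variational SDE.
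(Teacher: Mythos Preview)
Your proposal is correct and follows essentially the same route as the paper: the same mean-value decomposition along $\Xi_\xi$, the same forcing terms $G_b$ and $G_\sigma$, convergence of the normalised forcing in probability via Assumption~\ref{Assumption:Frechet}(ii) and boundedness of $\nabla_x\sigma$, the Gr\"onwall-in-probability step of Proposition~\ref{Proposition:GronwallConProb}, and the upgrade to $\cS^q$ via the uniform $\cS^p$ bounds from Theorems~\ref{theorem:ExistenceUniquenessMao} and~\ref{thm:FrechetExistence}. Your closing remark about the non-autonomous coefficients in the Gr\"onwall step is apt---the paper applies Proposition~\ref{Proposition:GronwallConProb} without comment even though that proposition is stated for autonomous $f,g$---but the underlying argument goes through unchanged since the variational coefficients $\nabla_x b(\cdot,\omega,X_\theta)$ and $\nabla_x\sigma(\cdot,\omega,X_\theta)$ still satisfy the monotonicity and Lipschitz bounds uniformly.
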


\begin{proof}
Let $t\in[0,T]$. Define $\Xi(\cdot) = X_\theta(\cdot) + \xi[X_{\theta+h}(\cdot) - X_\theta(\cdot)]$ and consider
\begin{align}
\nonumber
\frac{ X_{\theta+h}(t) - X_\theta(t) - F[h](t)}{\|h\|_{L^p}} &=  \frac{(\theta+h) - \theta - h}{\|h\|_{L^p}}
\\
\label{eq:FrechetconvProp1}
+\int_0^t \Big[\int_0^1 &\nabla_x b(s, \omega, \Xi(s)) d\xi - \nabla_x b(s, \omega, X_\theta(s)) \Big] \cdot \Big[ \tfrac{X_{\theta+h}(s) - X_\theta(s)}{\|h\|_{L^p}}\Big] ds
\\
\label{eq:FrechetconvProp2}
+ \int_0^t \Big[\int_0^1 &\nabla_x \sigma(s, \omega, \Xi(s)) d\xi - \nabla_x \sigma(s, \omega, X_\theta(s)) \Big] \cdot \Big[\tfrac{X_{\theta+h}(s) - X_\theta(s)}{\|h\|_{L^p}} \Big] dW(s)
\\
\nonumber
+\int_0^t \nabla_x &b(s, \omega, X_\theta(s)) \Big[ \tfrac{ X_{\theta+h}(s) - X_\theta(s) - F(s)[h](s) }{\|h\|_{L^p}} \Big] ds
\\
\nonumber
+ \int_0^t \nabla_x &\sigma(s, \omega, X_\theta(s))\Big[ \tfrac{X_{\theta+h}(s) - X_\theta(s) - F(s)[h]}{\|h\|_{L^p}} \Big] dW(s).
\end{align} 
Arguing the same way as in Theorem \ref{theo:GateauxDifferentiabilitySDE}, we show that Equation \eqref{eq:FrechetconvProp1} and \eqref{eq:FrechetconvProp2} converge to zero in probability as $\|h\|_{L^p} \to 0$. Then we apply Proposition \ref{Proposition:GronwallConProb} to conclude that
\begin{align*}
\frac{\| X_{\theta+h} - X_\theta - F[h]\|_\infty}{\|h\|_{L^p}} \xrightarrow{\bP} 0.
\end{align*}
Finally, from Theorem \ref{theorem:ExistenceUniquenessMao} and Theorem \ref{thm:FrechetExistence} we have that
\begin{align*}
\frac{\bE\Big[ \| X_{\theta+h} - X_\theta \|_\infty^p\Big]}{\|h\|^p_{L^p}} = O (1), 
\quad
\frac{\bE\Big[ \|F[h]\|_{\infty}^p\Big] }{\|h\|_{L^p}^p} = O(1)
\qquad \textrm{as}\quad \|h\|_{L^p}\to 0.
\end{align*}
Therefore, the random variable 
$
\Big\| \frac{ X_{\theta+h}(t) - X_\theta(t) - F[h](t)}{\|h\|_{L^p}} \Big\|_\infty^q 
$
is uniformly integrable and we conclude
$$
\Big\| \frac{X_{\theta+h} - X_\theta - F[h]}{\|h\|_{L^p}} \Big\|_{\cS^q} \to 0. 
$$
\end{proof}

\subsubsection*{Proof of the Frech\'et differentiability theorem}
\begin{proof}[Proof of Theorem \ref{theorem:Frechet}]
In Proposition \ref{proposition:FrechetLinear} we proved that $F$ is a bounded linear operator and in Theorem \ref{Theorem:FrechetDerivative} we proved that it satisfies Definition \ref{definition:Frechet}. 
\end{proof}

\subsection{Classical differentiability of SDEs}

For this section, we will be studying the specific case where $\theta = x$ (a constant point in $\bR^d$) and our perturbations are all in the constant function directions.  Fix $(t, \omega)\in [0,T]\times \Omega$ and consider the map $x\in\bR^d \mapsto X_x(t, \omega)$.  We will be proving that, with probability 1 and for Lebesgue almost all $t\in[0,T]$, it is a diffeomorphism from $\bR^d$ to $\bR^d$.  For this section, $h\in \bR^d$ will represent some deterministic vector in Euclidean space. We will be calculating the partial derivatives in direction $h$.

\subsubsection*{The Jacobian Matrix $J$}

\begin{definition}
Let $p\geq2$. Let $X_x$ be solution to the SDE \eqref{eq:SDE} under Assumption \ref{Assumption:Frechet} and with initial condition $X_x(0)=x\in \bR^d$. Let $I_d$ be the $d$-dimensional identity matrix. For $q\geq1$ and let $J\in \cS^q([0,T]; \bR^{d\times d})$ be the solution of the matrix valued SDE, $t\in[0,T]$
\begin{equation}
\label{eq:SDEJacobian}
J(t) = I_d + \int_0^t \nabla_x b\Big(s, \omega, X_x(s)(\omega)\Big) J(s)ds + \int_0^t \nabla_x \sigma\Big(s, \omega, X_x(s)(\omega)\Big) J(s) dW(s).
\end{equation}
\end{definition}

Notice that Equation \eqref{eq:SDEJacobian} is the same SDE as \eqref{eq:LinearSDEFundamental}. This means the Jacobian has an explicit solution which will be useful in Section \ref{section:Applications} below. 

\begin{theorem}
Let $p\geq2$. Let $X_x$ be solution to the SDE \eqref{eq:SDE} under Assumption \ref{Assumption:Frechet} and with initial condition $x\in \bR^d$. Then the SDE \eqref{eq:SDEJacobian} has a unique solution in $\cS^p$ and for any choice of $t\in [0T]$ the map $x\mapsto X_x(t)(\omega)$ is differentiable $\bP$-almost surely. The derivative is almost surely equal to the solution of the Jacobian Equation, SDE \eqref{eq:SDEJacobian}. 
\end{theorem}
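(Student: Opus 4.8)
The plan is to deduce the statement from the parametric–differentiability results already established, treating the two claims — existence and uniqueness of $J$, and pathwise differentiability — separately.

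First I would dispose of existence and uniqueness of the Jacobian SDE \eqref{eq:SDEJacobian}. Observe that \eqref{eq:SDEJacobian} is a linear matrix-valued SDE of exactly the form \eqref{eq:SDEDeriv} (with the additive terms set to zero), and indeed coincides with the fundamental-matrix equation \eqref{eq:LinearSDEFundamental} for the choice $B(s,\omega)=\nabla_x b(s,\omega,X_x(s))$ and $\Sigma(s,\omega)=\nabla_x\sigma(s,\omega,X_x(s))$. The hypotheses of Assumption \ref{Assumption:linear} were already verified in the proof of Theorem \ref{thm:FrechetExistence}: monotonicity of $b$ forces $\nabla_x b$ to be $\bP$-a.s. negative semidefinite up to the constant $L$, so $z^{T}\nabla_x b\, z\le L|z|^2$, while the global Lipschitz property of $\sigma$ gives $|\nabla_x\sigma|\le L$ and hence $\int_0^T\|\nabla_x\sigma\|_{L^\infty}^2\,ds\le L^2T<\infty$. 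Applying Theorem \ref{theorem:ExistenceUniquenessMaoLinearSDE} column by column (the $i$-th column carrying the deterministic initial datum $e_i$) yields a unique $J\in\cS^p([0,T];\bR^{d\times d})$.

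Next I would identify $J$ with the derivative operator $F$ of the previous subsection restricted to constant directions. Each column $J^{(\cdot,i)}$ solves \eqref{eq:SDEFrechet} with $h=e_i$, so by uniqueness $J^{(\cdot,i)}=F[e_i]$; since $F$ is linear and bounded (Proposition \ref{proposition:FrechetLinear}) we obtain $F[v]=Jv$ for every $v\in\bR^d$. Theorem \ref{Theorem:FrechetDerivative} then reads, for the constant initial condition $\theta=x$ and a constant perturbation $v$,
\begin{align*}
\big\| X_{x+v}-X_x-Jv\big\|_{\cS^q}=o\big(|v|\big)\qquad\text{as }|v|\to 0,
\end{align*}
which is a genuine Fr\'echet-differentiability statement (in the Banach space $\cS^q$) of $x\mapsto X_x$ with derivative $x\mapsto J$. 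In particular, evaluating at a fixed time $t$ and using $\|\cdot\|_{L^q(\Omega)}\le\|\cdot\|_{\cS^q}$, the map $x\mapsto X_x(t)$ is $L^q(\Omega)$-differentiable with derivative $J(t)$.

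The remaining, and genuinely delicate, task is to upgrade this to pathwise ($\bP$-a.s.) differentiability of the deterministic map $x\mapsto X_x(t)(\omega)$. My plan here is a Kolmogorov continuity argument for the random field indexed by the base point $x$. I would combine three ingredients: the stochastic-stability bound $\bE[\|X_{x}-X_{y}\|_\infty^p]\lesssim|x-y|^p$ from Theorem \ref{theorem:ExistenceUniquenessMao}, which controls first-order $x$-increments; an analogous increment bound for the derivative field $x\mapsto J_x$, obtained by writing the linear SDE solved by $J_x-J_y$ and estimating its coefficients through the continuity of $\nabla_x b,\nabla_x\sigma$ (Assumption \ref{Assumption:Frechet}(ii)) together with the convergence-in-probability Gr\"onwall inequality of Proposition \ref{Proposition:GronwallConProb}; and the $\cS^q$-smallness of the remainder above. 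Feeding these into Kolmogorov's criterion produces a jointly continuous modification on which $x\mapsto X_x(t)(\omega)$ possesses continuous difference quotients converging to $J(t)(\omega)$ in every direction, whence the classical lemma ``continuous directional derivatives imply differentiability'' gives the claim, with derivative a.s. equal to $J(t)$.

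The hard part is precisely this last upgrade. Because the drift is only locally Lipschitz of super-linear growth and the gradients $\nabla_x b,\nabla_x\sigma$ are merely \emph{continuous} (not H\"older or Lipschitz) in $x$, one cannot expect a clean power-rate modulus of continuity for the derivative field $x\mapsto J_x$; the spatial regularity must instead be extracted through convergence in probability and uniform integrability, mirroring the philosophy used throughout the paper. Moreover Kolmogorov's criterion over the $d$-dimensional parameter $x$ requires moments of order exceeding $d$, so one must either work at a sufficiently high integrability level $p>d$ or localise in $x$ (for instance via the monotonicity-preserving mollification of Remark \ref{rem:mollification}, solving the problem for smooth truncated coefficients where the classical flow theory of \cite{Kunita1990} applies and passing to the limit with Proposition \ref{Proposition:GronwallConProb}). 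Reconciling the merely-continuous gradients and the limited moments with the regularity demanded by Kolmogorov is the central difficulty.
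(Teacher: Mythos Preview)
Your handling of existence and uniqueness of \eqref{eq:SDEJacobian}, and the identification $F[v]=Jv$ via Theorem \ref{thm:FrechetExistence}, Proposition \ref{proposition:FrechetLinear} and Theorem \ref{Theorem:FrechetDerivative}, is correct and coincides with the paper.

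For the upgrade to $\bP$-a.s.\ differentiability the paper takes a route different from your Kolmogorov-over-$x$ plan, and in fact does not establish it under Assumption \ref{Assumption:Frechet} alone: it strengthens to Assumption \ref{Assumption:Classical} (Lipschitz $\nabla_x\sigma$, locally Lipschitz $\nabla_x b$). With $x$ and a direction $h\in\bR^d$ fixed, the paper studies the \emph{one-parameter} family $\varepsilon\mapsto K_\varepsilon:=(X_{x+\varepsilon h}-X_x)/\varepsilon$, which solves a linear SDE and therefore admits an explicit exponential representation. The Lipschitz assumptions on the gradients yield $\bE[\|K_\varepsilon'-K_\delta'\|_\infty^p]\lesssim|\varepsilon-\delta|^p$ for the Dol\'eans--Dade factor, so one-dimensional Kolmogorov in $\varepsilon$ gives a.s.\ continuity of $\varepsilon\mapsto K_\varepsilon$ at $0$ (Proposition \ref{pro:ClassicalContinuity}). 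Convergence in probability of $K_\varepsilon$ to $hJ$ then follows from Proposition \ref{Proposition:GronwallConProb} exactly as in the Fr\'echet argument; combining this with the a.s.\ continuity (any subsequence that converges a.s.\ must converge to $hJ$, and by continuity the full family does) yields Theorem \ref{thm:ClassicalDifferentiability}.

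Both strategies end up going beyond Assumption \ref{Assumption:Frechet}: yours by requiring either $p>d$ or a mollification step to produce a H\"older rate, the paper's by imposing extra smoothness on $\nabla_x b,\nabla_x\sigma$. The paper's approach is more economical in moments---any $p\ge2$ works because Kolmogorov is applied only in the scalar parameter $\varepsilon$---but buys this with Lipschitz control on the gradients; your approach would, if completed, trade that smoothness for a dimensional constraint on $p$. The difficulty you flag (no power-rate modulus for $x\mapsto J_x$ under mere continuity of the gradients) is precisely what the paper sidesteps by reparametrising along a single ray and using the explicit solution formula.
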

 
\subsubsection*{Differentiability of $X_{x}$}

In the previous section we proved almost sure continuity of ${\|X_{x+ \varepsilon h} - X_x\|_{\infty}}/{\varepsilon}$, we need to show that the limit as $\varepsilon \to 0$ is equal to the solution of the Jacobian SDE. 

\begin{assumption}
\label{Assumption:Classical}
Let $b: [0,T]\times \Omega\times \bR^d \to \bR^d$ and $\sigma: [0,T] \times \Omega\times\bR^d \to \bR^{d\times m}$ satisfy Assumption \ref{Assumption:1} for some $p\geq2$. Further, suppose that $\nabla_x b: [0,T]\times \Omega\times \bR^d \to \bR^{d\times d}$ and  $\nabla_x \sigma: [0,T]\times \Omega\times \bR^d \to \bR^{d\times m\times d}$ are progressively measurable and that
\begin{enumerate}[(i)]
\item For almost all $(t, \omega)\in [0,T]\times \Omega$ we have the functions $\sigma(t, \omega, \cdot)$ and $b(t, \omega, \cdot)$ have partial derivatives in all directions. 
\item For $x\in \bR^d$, we have that the maps $\bR^d \to L^0(\Omega)$ 
\begin{align*}
x \mapsto \int_0^T \Big| \nabla_x \sigma(t, \omega, x) \Big|^2 dt
\qquad\textrm{and}\qquad 
x \mapsto \int_0^T \Big| \nabla_x b(t, \omega, x) \Big|^2 dt,
\end{align*}
are continuous (where convergence in $L^0$ means convergence in probability).
\item For almost all $(t, \omega)\in[0,T]\times \Omega$ we have
$$
|\nabla_x \sigma(t, \omega, x) - \nabla_x\sigma(t, \omega, y) | \leq L |x-y|. 
$$

\item For $x, y\in\bR^d$ such that $|x|, |y|<N$ and for almost all $(s, \omega)\in [0,T]\times \Omega$, $\exists L_N>0$ such that
\begin{align*}
| \nabla_x b(s, \omega, x) - \nabla_x b(s, \omega, y) | \leq L_N |x-y|.
\end{align*} 
\end{enumerate}
\end{assumption}

\begin{proposition}
\label{pro:ClassicalContinuity}
Let $p\geq2$. Let $X_x$ be solution to the SDE \eqref{eq:SDE} under Assumption \ref{Assumption:Classical} and with initial condition $x\in \bR^d$. Then we have that the map
\begin{align*}
\varepsilon \mapsto \Big\| \frac{X_{x+ \varepsilon h}(\omega) - X_x(\omega)}{\varepsilon} \|_\infty
\quad \textrm{is $\bP$-almost surely continuous around 0}.
\end{align*}
\end{proposition}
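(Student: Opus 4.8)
The plan is to resist estimating the difference quotient $P_\varepsilon(\cdot)(\omega):=\big(X_{x+\varepsilon h}(\cdot)(\omega)-X_x(\cdot)(\omega)\big)/\varepsilon$ directly. Its moments are not uniformly controlled in the super-linear regime, and it is precisely the factor $1/\varepsilon$ that manufactures the trouble near the origin. A Gr\"onwall-in-probability argument in the spirit of Theorem \ref{Theorem:FrechetDerivative} would at best deliver convergence \emph{in probability} of $P_\varepsilon$ as $\varepsilon$ moves, whereas the statement asks for a pathwise, $\bP$-almost sure assertion. I would therefore instead exploit the regularity in $\varepsilon$ of the plain \emph{difference} $X_{x+\varepsilon h}-X_x$, which is already governed by the stochastic stability estimate of Theorem \ref{theorem:ExistenceUniquenessMao}, and only divide by $\varepsilon$ at the very end, where this operation is harmless.

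First I would regard the whole family $\varepsilon\mapsto X_{x+\varepsilon h}$ as a single random field indexed by $\varepsilon\in\bR$ and valued in the separable Banach space $\big(C([0,T];\bR^d),\|\cdot\|_\infty\big)$; this is legitimate because each $X_{x+\varepsilon h}$ is $\bP$-a.s.\ continuous in $t$ and lies in $\cS^p$ by Theorem \ref{theorem:ExistenceUniquenessMao}. Applying the stochastic stability estimate of that theorem with $\xi=x+\varepsilon h$ and $\theta=x+\varepsilon' h$ (both deterministic, hence trivially in $L^p(\cF_0;\bR^d;\bP)$) yields, for all $\varepsilon,\varepsilon'\in\bR$,
\begin{align*}
\bE\Big[ \| X_{x+\varepsilon h} - X_{x+\varepsilon' h}\|_\infty^p\Big] \lesssim |\varepsilon-\varepsilon'|^p\, |h|^p .
\end{align*}
Since Assumption \ref{Assumption:Classical} forces $p\geq 2$, the exponent beats the (one-dimensional) parameter $\varepsilon$ in the sense required by the Kolmogorov--Chentsov continuity theorem (one has $1+\beta=p$, so $\beta=p-1\geq 1>0$). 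I would then invoke Kolmogorov--Chentsov to produce a modification of the field $\varepsilon\mapsto X_{x+\varepsilon h}$ that is, $\bP$-a.s., locally H\"older continuous from $\bR$ into $\big(C([0,T];\bR^d),\|\cdot\|_\infty\big)$.

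Working with this continuous version, the conclusion is elementary. For $\bP$-almost every $\omega$ the map $\varepsilon\mapsto \|X_{x+\varepsilon h}(\omega)-X_x(\omega)\|_\infty$ is continuous (the reverse triangle inequality shows it inherits the modulus of continuity of $\varepsilon\mapsto X_{x+\varepsilon h}(\omega)$), and since $\varepsilon\mapsto 1/\varepsilon$ is continuous on every compact subset of $\bR\setminus\{0\}$, the composition $\varepsilon\mapsto \|P_\varepsilon(\omega)\|_\infty=\|X_{x+\varepsilon h}(\omega)-X_x(\omega)\|_\infty/|\varepsilon|$ is continuous on a punctured neighbourhood of the origin. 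This is exactly the asserted continuity ``around $0$''; the complementary question of the limit as $\varepsilon\to 0$ — continuity \emph{at} the origin — is the differentiability statement, settled by identifying the limit with the solution of the Jacobian SDE \eqref{eq:SDEJacobian} in the results that follow.

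The main point requiring care is the passage to the continuous-in-$\varepsilon$ modification: Kolmogorov--Chentsov only guarantees that the continuous version agrees with the genuine SDE solution $X_{x+\varepsilon h}$ for each \emph{fixed} $\varepsilon$ outside an $\varepsilon$-dependent null set, so one must fix this version once and for all and understand both the difference quotient and, ultimately, the derivative relative to it — in the same spirit as the choice of a Ray-absolutely-continuous version $\tilde f_h$ in Definition \ref{definition:RayAbsoluteContinuity}. Beyond this bookkeeping I expect no obstacle, and it is worth noting that none of the extra hypotheses of Assumption \ref{Assumption:Classical} on $\nabla_x b$ and $\nabla_x\sigma$ enter here: the proposition rests solely on the stability estimate, which is already a consequence of Assumption \ref{Assumption:1}.
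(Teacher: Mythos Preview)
Your argument is correct as far as it goes, but it delivers only continuity of $\varepsilon\mapsto\|P_\varepsilon\|_\infty$ on a \emph{punctured} neighbourhood of the origin, and this is both nearly trivial and too weak for the role the proposition plays. Look at how it is invoked in the proof of Theorem \ref{thm:ClassicalDifferentiability}: there one extracts a subsequence $\varepsilon_n\to 0$ along which $P_{\varepsilon_n}\to hJ$ almost surely, and then appeals to Proposition \ref{pro:ClassicalContinuity} to conclude that the \emph{full} family converges almost surely. That step requires the pathwise limit $\lim_{\varepsilon\to 0}P_\varepsilon(\omega)$ to exist $\bP$-a.s. Continuity on a punctured neighbourhood contributes nothing here --- a function can be perfectly continuous on $(0,1]$ and oscillate without limit as $\varepsilon\to 0$. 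Your deferral of the $\varepsilon\to 0$ behaviour to ``the results that follow'' is therefore circular: Theorem \ref{thm:ClassicalDifferentiability} is the result that follows, and it consumes exactly what you have postponed.

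The paper's proof confronts this directly. The Kolmogorov estimate $\bE[\|X_{x+\varepsilon h}-X_{x+\varepsilon' h}\|_\infty^p]\lesssim |\varepsilon-\varepsilon'|^p$ yields $\alpha$-H\"older regularity only for $\alpha<(p-1)/p<1$, never $\alpha=1$; the paper notes this explicitly and then writes the difference quotient $K_\varepsilon$ as the product of a drift exponential and a Dol\'eans--Dade exponential $K_\varepsilon'$, both depending on $\varepsilon$ only through $X_{x+\varepsilon h}$. The Lipschitz hypothesis on $\nabla_x\sigma$ (Assumption \ref{Assumption:Classical}(iii)) gives $\bE[\|K_\varepsilon'-K_\delta'\|_\infty^p]\lesssim|\varepsilon-\delta|^p$, so Kolmogorov now applies to $K_\varepsilon'$ itself, uniformly across $\varepsilon=0$; the local Lipschitz hypothesis on $\nabla_x b$ (Assumption \ref{Assumption:Classical}(iv)) handles the drift factor pathwise. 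At $\varepsilon=0$ the explicit formula reads $hJ$, so one obtains continuity \emph{at} the origin. Your remark that the extra hypotheses of Assumption \ref{Assumption:Classical} on $\nabla_x b$ and $\nabla_x\sigma$ appear unused is thus the telltale sign of the gap: those hypotheses are precisely what the nontrivial part of the argument consumes.
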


\begin{proof}
By the Stochastic Stability from Theorem \ref{theorem:ExistenceUniquenessMao} we have
$\bE\big[ \| X_{x} - X_{y} \|_\infty^p\big] \lesssim |x -y|^p$, hence by the Kolmogorov Continuity Criterion we have that the map $\varepsilon \mapsto X_{x+\varepsilon h}$ is almost surely continuous. In fact, one can show $\alpha$-H\"older continuity for $\alpha<1$ but not for when $\alpha = 1$ (which would imply Lipschitz Continuity). Therefore, we additionally need to prove almost sure continuity of the map $\varepsilon \mapsto ({X_{x+\varepsilon h} - X_x})/{\varepsilon}$. 

Denote for any $t\in[0,T]$ the auxiliary process $K_\varepsilon(t) = ({X_{x+\varepsilon h}(t) - X_x(t)})/{\varepsilon}$. This process satisfies the Linear SDE
\begin{align*}
K_\varepsilon(t) = h &+ \int_0^t \Big[ \int_0^1 \nabla_x b\big(s, \omega, X_{x}(s)+ \xi[X_{x+\varepsilon h}(s) - X_{x}(s)]\big)d\xi \Big] K_\varepsilon(s) ds 
\\
&+ \int_0^t \Big[ \int_0^1 \nabla_x \sigma\big(s, \omega, X_{x}(s)+ \xi[X_{x+\varepsilon h}(s) - X_{x}(s)]\big) d\xi \Big] K_\varepsilon(s) dW(s),
\end{align*}
and, introducing the auxiliary process $\Xi_\varepsilon(\cdot) := X_{x}(\cdot)+ \xi[X_{x+\varepsilon h}(\cdot) - X_{x}(\cdot)]\big)$, we can write the explicit solution of $K_\varepsilon$ (as it is the solution a geometric Brownian motion type SDE) 
\begin{align}
\label{eq:ContinuityJacobian1}
K_\varepsilon(t) = h \cdot \exp\Bigg( \int_0^t \Big[ \int_0^1 \nabla_x b\Big(s, \omega, \Xi_\varepsilon(s)\Big)d\xi\Big] ds\Bigg) 
\cdot \cE\Bigg( \int_0^1 \nabla_x \sigma\Big(\cdot, \omega, \Xi_\varepsilon(\cdot) \Big) d\xi\Bigg)(t),
\end{align}
where $\cE$ is the Dol\'ean-Dade operator introduced in \eqref{eq:DDexponential}, which for shorthand notation we denote $K_\varepsilon'(t) = \cE\big( \int_0^1 \nabla_x \sigma\big(\cdot, \omega, \Xi_\varepsilon(\cdot) \big) d\xi\big)(t)$. 

We now analyze the behaviour of differences of increments of $K_\varepsilon'$ in $\varepsilon$ parameter. Take $\delta>0$, using the properties of the Dol\'ean-Dade exponential, we have 
\begin{align*}
K_\varepsilon'(t) - K_\delta'(t) =
& 
\int_0^t \Big[ \int_0^1 \nabla_x \sigma\Big(s, \omega, \Xi_\varepsilon(s) \Big) 
                      - \nabla_x \sigma\Big(s, \omega, \Xi_\delta(s) \Big) d\xi \Big] K_\varepsilon'(s) dW(s)
\\
&+ \int_0^t \Big[ \int_0^1 \nabla_x \sigma\Big(s, \omega, \Xi_\delta(s) \Big)d\xi\Big] 
                                      \cdot \Big[K_\varepsilon'(s) - K_\delta'(s)\Big] dW(s). 
\end{align*}
Applying It\^o's formula for $f(x)=|x|^p$ and denoting $L(\cdot) = K_\varepsilon'(\cdot) - K_\delta'(\cdot)$ we get
\begin{align}
\label{eq:ContinuityJacobian1.1}
|L(t)|^p &= p\int_0^t | L(s)|^{p-2} L(s)^T \cdot \Big[ \int_0^1 \nabla_x \sigma\Big(s, \omega, \Xi_\varepsilon(s) \Big) - \nabla_x \sigma\Big(s, \omega, \Xi_\delta(s) \Big) d\xi \Big] K_\varepsilon'(s) dW(s)
\\
\label{eq:ContinuityJacobian1.2}
&+p\int_0^t | L(s)|^{p-2} L(s)^T \cdot \Big[ \int_0^1 \nabla_x \sigma\Big(s, \omega, \Xi_\delta(s) \Big)d\xi\Big] \cdot L(s) dW(s)
\\
\label{eq:ContinuityJacobian1.3}
&+\frac{p}{2} \int_0^t |L(s)|^{p-2} \Big| \big[ \int_0^1 \nabla_x \sigma\Big(s, \omega, \Xi_\varepsilon(s) \Big) - \nabla_x \sigma\Big(s, \omega, \Xi_\delta(s) \Big) d\xi \big] K_\varepsilon'(s)\Big|^2 ds
\\
\label{eq:ContinuityJacobian1.4}
&+\frac{p}{2} \int_0^t |L(s)|^{p-2} \Big| \big[\int_0^1 \nabla_x \sigma\Big(s, \omega, \Xi_\delta(s) \Big)d\xi\big] \cdot L(s) \Big|^2 ds
\\
\label{eq:ContinuityJacobian1.5}
&+\frac{p(p-2)}{2} \int_0^t |L(s)|^{p-4} \Big| L(s)^T \big[ \int_0^1 \nabla_x \sigma\Big(s, \omega, \Xi_\varepsilon(s) \Big) - \nabla_x \sigma\Big(s, \omega, \Xi_\delta(s) \Big) d\xi \big] K_\varepsilon'(s) \Big|^2 ds
\\
\label{eq:ContinuityJacobian1.6}
&+\frac{p(p-2)}{2} \int_0^t |L(s)|^{p-4} \Big| L(s)^T \big[\int_0^1 \nabla_x \sigma\Big(s, \omega, \Xi_\delta(s) \Big)d\xi\big] \cdot L(s) \Big|^2 ds. 
\end{align}

Next, take a supremum over time then expectations. Using the methods that have already been explored in detail for the proof of Theorem \ref{theo:ExistenceUniqueness for Candidate DX}, we know that the terms from lines \eqref{eq:ContinuityJacobian1.2}, \eqref{eq:ContinuityJacobian1.4} and \eqref{eq:ContinuityJacobian1.6} will all yield terms of the form $\lesssim \int_0^T \bE\big[ \|L\|_{\infty, t}^p\big] dt $ which will be accounted for with the Gr\"onwall inequality. 

Firstly, following the same methods for Theorem \ref{theo:ExistenceUniqueness for Candidate DX} and using the additional Assumption \ref{Assumption:Classical}(iii), for
\begin{align*}
\textrm{for }\eqref{eq:ContinuityJacobian1.1} \Rightarrow
& p \bE\Big[ \sup_{t\in[0,T]} \int_0^t |L(s)|^{p-2} \cdot \Big[\int_0^1 \nabla_x \sigma\Big(s, \omega, \Xi_\varepsilon(s) \Big) - \nabla_x \sigma\Big(s, \omega, \Xi_\delta(s) \Big) d\xi \Big] K_\varepsilon'(s) dW(s) \Big],
\\
&\leq pC_1 \bE\Big[ \Big( \int_0^T |L(s)|^{2p-4} \cdot \Big| \big[\int_0^1 \nabla_x \sigma\Big(s, \omega, \Xi_\varepsilon(s) \Big) - \nabla_x \sigma\Big(s, \omega, \Xi_\delta(s) \Big) d\xi \big] K_\varepsilon'(s) \Big|^2 ds \Big)^{\tfrac{1}{2}} \Big], 
\\
&\leq \frac{\bE\Big[ \|L\|_\infty^p\Big]}{n} + C_1^p\big[n(p-1)\big]^{p-1} \bE\Big[ \Big( TL \| X_{x+\varepsilon h} - X_{x+\delta h}\|_\infty^2 \cdot \| K_\varepsilon'\|_\infty^2 \Big)^{\tfrac{p}{2}} \Big], 
\\
&\leq \frac{\bE\Big[ \|L\|_\infty^p\Big]}{n} + C_1^p\big[n(p-1)\big]^{p-1} (TL)^{\tfrac{p}{2}} \bE\Big[ \| X_{x+\varepsilon h} - X_{x+\delta h}\|_\infty^{2p}\Big]^{\tfrac{1}{2}} \cdot \bE\Big[ \| K_\varepsilon'\|_\infty^{2p} \Big]^{\tfrac{1}{2}}. 
\end{align*} 

Secondly,
\begin{align*}
\textrm{for }\eqref{eq:ContinuityJacobian1.3} \Rightarrow
& \frac{p}{2} \bE\Big[ \int_0^T |L(s)|^{p-2} \Big| \big[ \int_0^1 \nabla_x \sigma\Big(s, \omega, \Xi_\varepsilon(s) \Big) - \nabla_x \sigma\Big(s, \omega, \Xi_\delta(s) \Big) d\xi \big] K_\varepsilon'(s)\Big|^2 ds \Big], 
\\
&\leq \frac{\bE\Big[ \|L\|_\infty^p\Big]}{n} + \Big[ \frac{n(p-2)}{2}\Big]^{\tfrac{p-2}{2}} (LT)^{\tfrac{p}{2}} \bE\Big[ \|X_{x+\varepsilon h} - X_{x+\delta h}\|_\infty^{2p} \Big]^{\tfrac{1}{2}} \cdot \bE\Big[ \| K_\varepsilon'\|_\infty^{2p} \Big]^{\tfrac{1}{2}}. 
\end{align*} 
The terms from \eqref{eq:ContinuityJacobian1.5} are treated in exactly the same way. 

Finally, we use that $\bE\big[ \| K_\varepsilon'\|_\infty^{2p} \big]^{1/2}<\infty$ and $\bE\big[ \|X_{x+\varepsilon h} - X_{x+\delta h}\|_\infty^{2p} \big]^{1/2}\lesssim |\delta - \varepsilon|^p |h|^p$ to conclude
$$
\bE\Big[ \| K_\varepsilon' - K_\delta'\|_\infty^p \Big] \lesssim |\varepsilon - \delta|^p |h|^p. 
$$
Hence by Kolmogorov Continuity Criterion, we have the map $\varepsilon \mapsto K_\varepsilon'(t)(\omega)$ is almost surely continuous for any $t\in[0,T]$ $\bP$-almost surely. 

Now, we return to Equation \eqref{eq:ContinuityJacobian1}. Using the almost sure continuity of $\varepsilon\mapsto X_{x+\varepsilon h}(t)(\omega)$ and Assumption \ref{Assumption:Classical} (iv), we have that
$$
\varepsilon \mapsto \exp\Bigg( \int_0^t \Big[ \int_0^1 \nabla_x b\Big(s, \omega, X_{x}(\cdot)+ \xi[X_{x+\varepsilon h}(\cdot) - X_{x}(\cdot)]\Big)d\xi\Big] ds\Bigg), 
$$
is almost surely continuous. Hence $\varepsilon \mapsto K_\varepsilon(t)(\omega)$ is also almost surely continuous.  
\end{proof}

\begin{theorem}
\label{thm:ClassicalDifferentiability}
Let $p\geq2$. Let $X_x$ be solution to the SDE \eqref{eq:SDE} under Assumption \ref{Assumption:Classical} and with initial condition $x\in \bR^d$. Then we have that $\forall t \in [0,T]$
\begin{align*}
\frac{X_{x+ \varepsilon h}(t)(\omega) - X_x(t)(\omega)}{\varepsilon} \to h \cdot J(t)(\omega) 
\quad \textrm{$\bP$-almost surely as $\varepsilon \to 0$}.
\end{align*}
\end{theorem}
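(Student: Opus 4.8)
The plan is to pass to the limit $\varepsilon\to0$ directly in the explicit representation \eqref{eq:ContinuityJacobian1} of the difference quotient $K_\varepsilon(t)=(X_{x+\varepsilon h}(t)-X_x(t))/\varepsilon$ and to identify the limit with $h\cdot J(t)$. First I would record that the Jacobian SDE \eqref{eq:SDEJacobian} is precisely of the linear type \eqref{eq:SDEDeriv} with $B(s,\omega)=\nabla_x b(s,\omega,X_x(s))$, $\Sigma(s,\omega)=\nabla_x\sigma(s,\omega,X_x(s))$ and $b=\sigma\equiv0$; hence by Theorem \ref{theorem:ExistenceUniquenessMaoLinearSDE} its solution is the fundamental matrix $J(t)=\Psi(t)$ given by \eqref{eq:LinearSDEFundamental} with these coefficients. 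The crucial observation is that substituting $\varepsilon=0$ in the auxiliary process $\Xi_\varepsilon(\cdot)=X_x(\cdot)+\xi[X_{x+\varepsilon h}(\cdot)-X_x(\cdot)]$ gives $\Xi_0=X_x$, so that the right-hand side of \eqref{eq:ContinuityJacobian1} evaluated at $\varepsilon=0$ reads
\[
h\cdot\exp\Big(\int_0^t\nabla_x b(s,\omega,X_x(s))\,ds\Big)\cdot\cE\big(\nabla_x\sigma(\cdot,\omega,X_x(\cdot))\big)(t),
\]
which is exactly $h\cdot J(t)=h\cdot\Psi(t)$ by \eqref{eq:LinearSDEFundamental}. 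It therefore suffices to prove that $K_\varepsilon(t)\to h\,J(t)$ $\bP$-almost surely, and this is nothing but continuity of the map $\varepsilon\mapsto K_\varepsilon(t)(\omega)$ at $\varepsilon=0$.

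To obtain this continuity I would invoke the two almost-sure-continuity facts already established in Proposition \ref{pro:ClassicalContinuity}. On the one hand, the Kolmogorov estimate $\bE[\|K_\varepsilon'-K_\delta'\|_\infty^p]\lesssim|\varepsilon-\delta|^p|h|^p$ proved there shows, via the Kolmogorov continuity criterion, that $\varepsilon\mapsto K_\varepsilon'(t)(\omega)$ (the Dol\'eans-Dade factor) admits an almost surely continuous version; since that estimate passes to the boundary $\delta=0$, where $K_0'(t)=\cE(\nabla_x\sigma(\cdot,\omega,X_x(\cdot)))(t)$ because $X_{x+0\cdot h}=X_x$, the continuity extends up to $\varepsilon=0$. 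On the other hand, the almost sure continuity of $\varepsilon\mapsto X_{x+\varepsilon h}(\cdot)(\omega)$ (a consequence of the stochastic stability bound of Theorem \ref{theorem:ExistenceUniquenessMao} and the Kolmogorov criterion) combined with Assumption \ref{Assumption:Classical}(ii) and (iv) yields the almost sure continuity at $\varepsilon=0$ of the drift exponential factor $\varepsilon\mapsto\exp(\int_0^t\int_0^1\nabla_x b(s,\omega,\Xi_\varepsilon(s))\,d\xi\,ds)$.

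The final step is to combine these two: $K_\varepsilon(t)(\omega)$ is the product of the drift exponential factor and the Dol\'eans-Dade factor $K_\varepsilon'(t)(\omega)$, both of which are almost surely continuous in $\varepsilon$ on a neighborhood of $0$, so their product is as well; evaluating the continuous extension at $\varepsilon=0$ and using $\Xi_0=X_x$ gives the limit $h\,J(t)$. I expect the main obstacle to be the diffusion term, since one cannot pass to the limit pointwise inside the stochastic integral and the integrand $\int_0^1\nabla_x\sigma(s,\omega,\Xi_\varepsilon(s))\,d\xi$ itself depends on $\varepsilon$ through $X_{x+\varepsilon h}$. This is precisely why the argument is routed through the explicit Dol\'eans-Dade representation \eqref{eq:ContinuityJacobian1}: the $L^p$-H\"older estimate of Proposition \ref{pro:ClassicalContinuity} converts the delicate stochastic-integral limit into a Kolmogorov-continuity statement in the parameter $\varepsilon$, thereby legitimising the pointwise limit without appealing to any dominating function or uniform integrability argument that the superlinear, non-integrable drift would otherwise obstruct.
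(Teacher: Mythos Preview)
Your approach is correct and in fact more direct than the paper's. The paper first proves convergence in probability by writing the SDE for $K_\varepsilon(t)-hJ(t)$, showing that the two ``error'' terms (those involving $\int_0^1\nabla_xb(\cdot,\Xi_\varepsilon)\,d\xi-\nabla_xb(\cdot,X_x)$ and the analogous $\sigma$-term) go to zero in probability, and then invoking the Gr\"onwall-in-probability result (Proposition~\ref{Proposition:GronwallConProb}); only afterwards does it appeal to Proposition~\ref{pro:ClassicalContinuity} together with a subsequence argument to upgrade to almost sure convergence. You instead bypass Proposition~\ref{Proposition:GronwallConProb} entirely: you observe that the explicit representation \eqref{eq:ContinuityJacobian1} and the Kolmogorov estimate $\bE[\|K_\varepsilon'-K_\delta'\|_\infty^p]\lesssim|\varepsilon-\delta|^p|h|^p$ both extend to $\delta=0$ (since $\Xi_0=X_x$ and the stochastic stability bound holds there), so the map $\varepsilon\mapsto K_\varepsilon(t)(\omega)$ is almost surely continuous \emph{at} $\varepsilon=0$ with value $h\cdot\Psi(t)=h\cdot J(t)$ by \eqref{eq:LinearSDEFundamental}. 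This is legitimate and shorter; what you give up is the SDE-based identification of the limit, which is more in keeping with the paper's overall methodology and does not require passing through an explicit exponential formula. Note also that strictly speaking the factorisation of $\Psi(t)$ into a drift exponential times a Dol\'eans--Dade exponential (as in \eqref{eq:ContinuityJacobian1}) is only literally valid when the matrices commute, but since the paper itself uses this representation you are on equal footing there.
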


\begin{proof}
Let $t\in[0,T]$. First, we show convergence in probability of $({X_{x+ \varepsilon h}(t) - X_x(t)})/{\varepsilon}$ to $h\cdot J(t) $ using Proposition \ref{Proposition:GronwallConProb}. Convergence in probability will imply the existence of a subsequence which converges almost sure. Finally, using Proposition \ref{pro:ClassicalContinuity} we know the limit will be almost surely unique. 

Writing out the SDE for the increments' process, we have
\begin{align}
\nonumber
& \frac{X_{x+\varepsilon h}(t)- X_{x}(t)}{\varepsilon}  - hJ(t)
\\ 
\label{eq:JacobConProb:1}
&\qquad = \int_0^t \Big[ \int_0^1 \nabla_x b(s, \omega, \Xi_\varepsilon(s))d\xi - \nabla_x b(s, \omega, X_x(s))\Big] \Big[ \frac{X_{x+\varepsilon h}(s) - X_x(s)}{\varepsilon} \Big] ds
\\
\label{eq:JacobConProb:2}
&\qquad\quad +\int_0^t \Big[ \int_0^1 \nabla_x \sigma(s, \omega, \Xi_\varepsilon(s)) d\xi - \nabla_x\sigma(s, \omega, X_x(s)) \Big] \Big[ \frac{X_{x+\varepsilon h}(s) - X_x(s)}{\varepsilon} \Big] dW(s)
\\
\nonumber
&\qquad \quad+ \int_0^t \nabla_xb(s, \omega, X_x(s)) \Big[ \frac{X_{x+\varepsilon h}(s) - X_{x}(s)}{\varepsilon} - hJ(s)\Big] ds
\\
\nonumber
&\qquad \quad+ \int_0^t \nabla_x\sigma(s, \omega, X_x(s)) \Big[ \frac{X_{x+\varepsilon h}(s) - X_{x}(s)}{\varepsilon} - hJ(s)\Big] dW(s),
\end{align}
where $\Xi_\varepsilon(\cdot) = X_{x}(\cdot)+ \xi[X_{x+\varepsilon h}(\cdot) - X_{x}(\cdot)]$. 
As with Theorem \ref{theo:GateauxDifferentiabilitySDE}, we argue that the terms \eqref{eq:JacobConProb:1} and \eqref{eq:JacobConProb:2} converge in probability to $0$, then use Proposition \ref{Proposition:GronwallConProb} to conclude that
\begin{align*}
\Big\| \frac{ X_{x+\varepsilon h}(\omega)(\cdot) - X_{x}(\omega)(\cdot) }{\varepsilon} - hJ(\omega)(\cdot) \Big\|_\infty \xrightarrow{\bP} 0. 
\end{align*}
Thus there exists a sequence $\varepsilon_n$ such that $\varepsilon_n \to 0$ as $n\to \infty$ and an event $C_1\subset \Omega$ with $\bP[C_1]=0$ such that $\forall \omega \in \Omega\backslash C_1$

\begin{align*}
\lim_{n\to \infty} \Big\| \frac{ X_{x+\varepsilon_n h}(\omega) - X_{x}(\omega) }{\varepsilon_n} - hJ(\omega) \Big\|_\infty \rightarrow 0. 
\end{align*}

Finally, by Proposition \ref{pro:ClassicalContinuity} there exists an event $C_2 \subset \Omega$ with $\bP[C_2]=0$ such that $\forall \omega \in \Omega\backslash C_2$ the map
$$
\varepsilon \mapsto \Big\| \frac{ X_{x+\varepsilon h}(\omega) - X_{x}(\omega) }{\varepsilon} - hJ(\omega) \Big\|_\infty,
$$
is continuous for $\varepsilon$ at 0. Then for $\forall \omega \in \Omega \backslash (C_1 \cup C_2)$
$$
\lim_{\varepsilon \to 0} \Big\| \frac{ X_{x+\varepsilon h}(\omega) - X_{x}(\omega) }{\varepsilon} - hJ(\omega) \Big\|_\infty \rightarrow 0,
$$
and $\bP[ C_1 \cup C_2] = 0$. 

\end{proof}

\subsubsection*{Invertibility of the Jacobian Matrix}
Next, we wish to show that the Jacobian Matrix $J(t)$ is $\bP$-almost surely invertible for any choice of $t\in[0,T]$. Notice that due to the initial condition, we have that this is true for $t=0$ since $J(0) = I_d$. 

To prove the Jacobian is invertible, we consider a matrix valued stochastic process and observe that for any choice of $t\in[0,T]$, this process will take value equal to the left inverse of $J$. This proof follows that of Nualart, \cite{nualart2006malliavin}*{Chapter 2.3; Equation 2.8}.

We introduce the SDE 
\begin{align}
\label{eq:InverseJacobian}
\nonumber
K(t) = I_d &- \int_0^t K(s) \Big[ \nabla_x b(s, \omega, X(s)) - \Big\langle \nabla_x \sigma, \nabla_x \sigma\Big\rangle_{\bR^m}\big(s, \omega, X(s)\big)\Big]ds
\\
& \qquad  - \int_0^t K(s) \nabla_x \sigma(s, \omega, X(s)) dW(s).
\end{align}

\begin{proposition}
Let $p\geq2$. Let $X_x$ be solution to the SDE \eqref{eq:SDE} under Assumption \ref{Assumption:Frechet} and with initial condition $x\in \bR^d$. Then we have the following identity $K(t)J(t) = I_d$ for all $t\in[0,T]$ $\bP$-a.s.
\end{proposition}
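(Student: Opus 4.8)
The plan is to apply the Itô integration-by-parts (product) formula to the matrix-valued process $Y(t) := K(t)J(t)$ and to show that its stochastic differential vanishes identically, whence $Y(t) = Y(0) = I_d$ for all $t\in[0,T]$ $\bP$-a.s. Both factors are continuous semimartingales on $[0,T]$: $J$ is the unique $\cS^p$-solution of \eqref{eq:SDEJacobian} provided by Theorem \ref{theorem:ExistenceUniquenessMaoLinearSDE}, while $K$ solves the linear SDE \eqref{eq:InverseJacobian} whose coefficients, evaluated along the fixed continuous path of $X$, are adapted and locally bounded, so that $K$ likewise exists as a continuous semimartingale. Since the cancellation below is purely algebraic and local, nothing beyond this is required. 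This mirrors the deterministic-coefficient argument of \cite{nualart2006malliavin}*{Chapter 2.3}.

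First I would fix shorthand for the (random, adapted) coefficient matrices, writing $b'(s) := \nabla_x b(s,\omega,X(s)) \in \bR^{d\times d}$ and, for $j=1,\dots,m$, letting $\sigma'_j(s)\in\bR^{d\times d}$ denote the matrix associated with the $j$-th Brownian component of $\nabla_x\sigma(s,\omega,X(s))$, so that $\langle\nabla_x\sigma,\nabla_x\sigma\rangle_{\bR^m} = \sum_{j=1}^m \sigma'_j\sigma'_j$. In differential form \eqref{eq:SDEJacobian} and \eqref{eq:InverseJacobian} read
\begin{align*}
dJ(t) &= b'(t)J(t)\,dt + \sum_{j=1}^m \sigma'_j(t)J(t)\,dW^{(j)}(t),
\\
dK(t) &= -K(t)\Bigl[b'(t) - \sum_{j=1}^m \sigma'_j(t)\sigma'_j(t)\Bigr]dt - \sum_{j=1}^m K(t)\sigma'_j(t)\,dW^{(j)}(t).
\end{align*}
The product rule gives $dY = (dK)J + K(dJ) + d[K,J]$, where, using $d[W^{(j)},W^{(k)}]=\delta_{jk}\,dt$, the covariation of the two matrix-valued martingale parts is
\begin{align*}
d[K,J](t) = \sum_{j=1}^m \bigl(-K(t)\sigma'_j(t)\bigr)\bigl(\sigma'_j(t)J(t)\bigr)\,dt = -K(t)\Bigl(\sum_{j=1}^m \sigma'_j(t)\sigma'_j(t)\Bigr)J(t)\,dt.
\end{align*}

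It then remains to collect terms. The Brownian integrands cancel in pairs, $-\sum_j K\sigma'_j J + \sum_j K\sigma'_j J = 0$; in the drift, the two occurrences of $Kb'J$ cancel, and the Itô correction $+K\bigl(\sum_j \sigma'_j\sigma'_j\bigr)J$ carried by $dK$ is exactly annihilated by $d[K,J]$. Hence $dY(t)\equiv 0$, and since $Y(0)=K(0)J(0)=I_d$ we conclude $K(t)J(t)=I_d$ for every $t\in[0,T]$ $\bP$-a.s. The only genuinely delicate point is the quadratic-covariation bookkeeping: the compensator $\langle\nabla_x\sigma,\nabla_x\sigma\rangle_{\bR^m}$ was inserted into \eqref{eq:InverseJacobian} precisely to neutralise this correction, and throughout one must preserve the correct left/right order of the non-commuting matrix products.
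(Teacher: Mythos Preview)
Your proof is correct and follows essentially the same approach as the paper: apply the It\^o product rule to $K(t)J(t)$, compute the quadratic covariation, and observe that all drift and martingale terms cancel pairwise so that $d(KJ)\equiv 0$ with $K(0)J(0)=I_d$. Your write-up is in fact slightly more careful than the paper's, since you spell out the componentwise notation $\sigma'_j$ and explicitly flag the non-commutativity issue.
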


\begin{proof}
We deal here with matrix valued processes which cannot necessarily be assumed commutative, this makes the analysis slightly more involved. It\^o's formula for matrices gives that $(KJ)(0) = I_d$ and 
\begin{align*}
d(K J)(t) =& K(t) dJ(t) + dK(t)J(t) + d[K, J](t),
\\
=& K(t) \nabla_x b(t, \omega, X(t)) J(t) dt + K(t) \sigma(t, \omega, X(t)) J(t) dW(t)
\\
&-K(t) \nabla_x b(t, \omega, X(t)) J(t) dt -  K(t) \sigma(t, \omega, X(t)) J(t) dW(t)
\\
&+ K(t)\Big\langle \nabla_x \sigma, \nabla_x \sigma\Big\rangle_{\bR^m}\big(s, \omega, X(s)\big) J(t) dt
\\
&- K(t)\Big\langle \nabla_x \sigma, \nabla_x \sigma\Big\rangle_{\bR^m}\big(s, \omega, X(s)\big) J(t) dt = 0dt + 0dW(t).
\end{align*}
\end{proof}
SDE \eqref{eq:InverseJacobian} does not necessarily satisfy Assumption \ref{Assumption:linear} as the term $-z^T\nabla_x b(t, \omega, X(t))z$ is not bounded from above by a constant almost surely for any choice of vector $|z|=1$. However, an explicit solution to the SDE can be written out pathwise, even if it does not have finite moments. This construction will have the property that it is the left inverse of $J$.

\begin{proposition}
The determinant of the Matrix $J(t)$, denoted $D(t)$, is called the Stochastic Wronskian and satisfies the SDE
\begin{align}
\label{eq:WronskianSDE}
dD(t) =& \mbox{Tr}\Big(\nabla_x b(t, \omega, X(t)) \Big) D(t) dt + \mbox{Tr}\Big( \nabla_x \sigma(t, \omega, X(t))\Big) D(t) dW(t)
\\
\nonumber
&+ \Big[ \Big\langle \mbox{Tr}\big( \nabla_x\sigma(t, \omega, X(t)) \big), \mbox{Tr}\big(\nabla_x\sigma(t, \omega, X(t)) \big) \Big\rangle_{\bR^m} 
\\
&
\qquad \qquad \nonumber
- \mbox{Tr}\Big( \big\langle \nabla_x\sigma(t, \omega, X(t)), \nabla_x\sigma(t, \omega, X(t))\big\rangle_{\bR^m}\Big) \Big] D(t) dt, 
\end{align}
with $D(0) = 1$. $D(t)$ has explicit form
\begin{align}
\label{eq:WronskianExplicit}
D(t) = \exp&\Bigg( \int_0^t \mbox{Tr}\Big( \nabla_x b(s, \omega, X(s))\Big) - \frac{1}{2} \mbox{Tr}\Big(\Big\langle \nabla_x \sigma(s, \omega, X(s)) , \nabla_x \sigma(s, \omega, X(s))\Big\rangle_{\bR^m} \Big)ds
\\
\nonumber
&+ \int_0^t \mbox{Tr} \Big( \nabla_x \sigma(s, \omega, X(s)) \Big)dW(s) \Bigg).
\end{align}
\end{proposition}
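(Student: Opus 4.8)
The plan is to derive \eqref{eq:WronskianSDE} by applying It\^o's formula to the determinant, regarded as a smooth (indeed polynomial) function of the entries of $J$, and then to read off \eqref{eq:WronskianExplicit} by recognising \eqref{eq:WronskianSDE} as a scalar linear SDE of geometric type. First I would rewrite the Jacobian equation \eqref{eq:SDEJacobian} in the compact form
\begin{align*}
dJ(t) = A(t) J(t)\,dt + \sum_{j=1}^m B_j(t) J(t)\,dW^{(j)}(t), \qquad J(0) = I_d,
\end{align*}
where $A(t) = \nabla_x b(t, \omega, X(t))$ and $B_j(t) = \nabla_x \sigma^{(\cdot, j)}(t, \omega, X(t))$ are the $d\times d$ matrices coming from the drift and from the $j$-th column of the diffusion. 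By the preceding Proposition we have $K(t) J(t) = I_d$, so $J(t)$ is $\bP$-a.s.\ invertible for every $t$ with $J(t)^{-1} = K(t)$, and in particular $D(t) = \det J(t) \neq 0$. Since the coefficients $\mathrm{Tr}(A)$, $\mathrm{Tr}(B_j)$ are continuous adapted processes, all the manipulations below are justified pathwise and, if one wishes, localised along stopping times $\tau_n \uparrow T$, so that no global moment bound is ever required (recall $K=J^{-1}$ need not be integrable).

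Because $\det : \bR^{d\times d} \to \bR$ is a polynomial in the $d^2$ entries, it is $C^\infty$ and It\^o's formula applies to $D(t) = \det(J(t))$. The algebraic device I would use to read off its first and second differentials at a general invertible point $J$ is the multiplicative identity $\det(J + H) = \det(J)\,\det(I_d + J^{-1} H)$ together with the exact expansion
\begin{align*}
\det(I_d + N) = 1 + \mathrm{Tr}(N) + \tfrac12\big[ (\mathrm{Tr} N)^2 - \mathrm{Tr}(N^2) \big] + \dots,
\end{align*}
in which the displayed terms are precisely the ones retained by It\^o's formula (the coefficients being the first two elementary symmetric functions of the eigenvalues of $N$). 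Writing $N = J^{-1}\,dJ = (J^{-1} A J)\,dt + \sum_j (J^{-1} B_j J)\,dW^{(j)}$ and keeping terms up to order $dt$, cyclicity of the trace gives $\mathrm{Tr}(N) = \mathrm{Tr}(A)\,dt + \sum_j \mathrm{Tr}(B_j)\,dW^{(j)}$, while the quadratic contributions collapse to $(\mathrm{Tr} N)^2 \to \sum_j (\mathrm{Tr} B_j)^2\,dt$ and $\mathrm{Tr}(N^2) \to \sum_j \mathrm{Tr}(B_j^2)\,dt$, again by cyclicity.

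Collecting these contributions yields
\begin{align*}
dD(t) = D(t)\Big[ \mathrm{Tr}(A)\,dt + \sum_j \mathrm{Tr}(B_j)\,dW^{(j)} + \tfrac12 \sum_j \big( (\mathrm{Tr} B_j)^2 - \mathrm{Tr}(B_j^2)\big)\,dt \Big].
\end{align*}
It then remains to translate the trace expressions back into the notation of the statement, namely $\sum_j (\mathrm{Tr} B_j)^2 = \langle \mathrm{Tr}(\nabla_x\sigma), \mathrm{Tr}(\nabla_x\sigma)\rangle_{\bR^m}$ and $\sum_j \mathrm{Tr}(B_j^2) = \mathrm{Tr}\big(\langle \nabla_x\sigma, \nabla_x\sigma\rangle_{\bR^m}\big)$, which recovers the bracket appearing in \eqref{eq:WronskianSDE}. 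Finally, \eqref{eq:WronskianSDE} is a one-dimensional linear SDE $dD = D(\alpha\,dt + \beta\,dW)$ with $D(0)=1$, whose unique solution is the Dol\'eans-Dade exponential $D(t) = \cE(\beta)(t)\exp\!\big(\int_0^t \alpha\,ds\big)$ from \eqref{eq:DDexponential}; substituting $\beta = \mathrm{Tr}(\nabla_x\sigma)$ and $\alpha = \mathrm{Tr}(\nabla_x b) - \tfrac12 \mathrm{Tr}(\langle \nabla_x\sigma,\nabla_x\sigma\rangle_{\bR^m})$ gives exactly \eqref{eq:WronskianExplicit}.

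The one genuinely delicate step is the second-order It\^o term: showing that the contraction of the Hessian of $\det$ against the matrix of quadratic covariations $d[J_{kl}, J_{pq}] = \sum_j (B_jJ)_{kl}(B_jJ)_{pq}\,dt$ collapses to $\tfrac12 D \sum_j\big((\mathrm{Tr} B_j)^2 - \mathrm{Tr}(B_j^2)\big)$. Carried out directly, this requires bookkeeping with the second cofactors (signed $(d-2)$-minors) of $J$; the multiplicative expansion above is exactly what lets me bypass that, since it expresses the second differential of $\det$ at an invertible $J$ through the universal coefficients of $\det(I_d + N)$. I would emphasize that the invertibility of $J$, furnished by the preceding Proposition, is precisely the ingredient making this reduction legitimate, while the first-order term alone needs only the universal identity $J\,\mathrm{adj}(J) = \det(J) I_d$.
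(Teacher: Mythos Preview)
Your proposal is correct and follows essentially the same route as the paper: apply It\^o's formula to the determinant of $J$ to obtain the scalar SDE, then identify the explicit solution. The paper simply cites \cite{mao2008stochastic}*{Theorem 3.2.2} and outlines: It\^o on $\det J$ gives \eqref{eq:WronskianSDE}, It\^o on the exponential \eqref{eq:WronskianExplicit} shows it also satisfies \eqref{eq:WronskianSDE}, and uniqueness from Theorem~\ref{theorem:ExistenceUniquenessMaoLinearSDE} finishes. You instead carry out the It\^o computation explicitly via the multiplicative expansion $\det(J+H)=\det(J)\det(I_d+J^{-1}H)$ and then solve the resulting scalar linear SDE directly as a Dol\'eans--Dade exponential; this is a minor but pleasant variation that bypasses both the cofactor bookkeeping and the separate verify-and-invoke-uniqueness step.

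One remark worth recording: your derivation produces a factor $\tfrac12$ in front of the quadratic correction $\sum_j\big((\mathrm{Tr}\,B_j)^2-\mathrm{Tr}(B_j^2)\big)$, whereas the displayed SDE \eqref{eq:WronskianSDE} in the paper has no such $\tfrac12$. Your coefficient is the correct one, and indeed it is the one consistent with the explicit form \eqref{eq:WronskianExplicit} (apply It\^o to the exponential and check). So the discrepancy is a typo in the statement, not a gap in your argument.
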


\begin{proof}
The proof can be found in \cite{mao2008stochastic}*{Theorem 3.2.2}. The proof involves applying It\^o's formula to the determinant of $J(t)$ and establishing that it satisfies Equation \eqref{eq:WronskianSDE}. Then one applies It\^o's formula to Equation \eqref{eq:WronskianExplicit} and verifies that this likewise satisfies \eqref{eq:WronskianSDE}. Finally, by Theorem \ref{theorem:ExistenceUniquenessMaoLinearSDE}, the solution is unique. 
\end{proof}

The matrix $\nabla_xb$ being lower semidefinite means that Tr$(\nabla_xb)$ is bounded  from above, but not necessarily from below. We can conclude the $D(\cdot)$ is almost surely positive and therefore the process $K$ is $\bP$-almost surely the inverse (left or right) of $J$ provided Tr$(\nabla_xb)\neq -\infty$ with positive probability.


\section{Applications}
\label{section:Applications}

In this section, we recover and discuss some standard applications of Malliavin Differentiation and evaluate some of the problems that occur under our framework. 

\subsection{Representation formulae}
Firstly, we present a way of writing the Malliavin Derivative of $X_\theta$ in terms of the Jacobian. 
\begin{proposition}[Representation formulae]
Let $X_x$ be solution to the SDE \eqref{eq:SDE} under Assumption \ref{Assumption:2} and with initial condition $X_x(0)=x\in \bR^d$. Let $J$ satisfy the SDE \eqref{eq:SDEJacobian}. Consider the SDE for the process $J(t)J(s)^{-1}$ for $t>s$. 
\begin{align}
\nonumber
J_s(t) =& J(t) J(s)^{-1} 
\\
\nonumber
=& J(s)J(s)^{-1}  + \int_s^t \nabla_x b(r, \omega, X(r)(\omega)) J(r)J(s)^{-1} dr 
\\
&
\nonumber
\qquad \qquad \qquad \qquad + \int_s^t \nabla_x \sigma(r, \omega, X(r)(\omega)) J(r) J(s)^{-1} dW(r)
\\
\label{eq:JacobianFundMatrix}
=& I_d + \int_s^t \nabla_x b(r, \omega, X(r)(\omega)) J_s(r)dr + \int_s^t \nabla_x \sigma(r, \omega, X(r)(\omega)) J_s(r) dW(r).
\end{align}
Equation \eqref{eq:JacobianFundMatrix} is the Fundamental Matrix of the Linear Stochastic Differential Equation \eqref{eq:SDEMallDeriv}. As such, under Assumption \ref{Assumption:2} the Malliavin Derivative of $X$ can be expressed for $t>s$ as
\begin{align*}
D_sX(t) = J_s(t) A(s, t), 
\end{align*} 
where $A(s, t)$ is defined for $t>s$ as 
\begin{align*}
A(s, t)=  \sigma(s, \omega, X(s)(\omega)) &+ \int_s^t J_s(r)^{-1}\Big( U(s, r, \omega) - 
\Big\langle \nabla_x \sigma(r, \omega, X(r)(\omega)),  V(s, r, \omega)\Big\rangle_{\bR^m} \Big) dr
\\
&+ \int_s^t J_s^{-1}(r) V(s, r, \omega) dW(r).
\end{align*} 
\end{proposition}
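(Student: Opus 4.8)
The plan is to identify $J_s(t) = J(t)J(s)^{-1}$ as the fundamental (transition) matrix of the homogeneous part of the Malliavin SDE \eqref{eq:SDEMallDeriv}, and then recover $D_sX(t) = J_s(t)A(s,t)$ by a variation-of-constants argument, the only delicate point being the It\^o correction that is encoded in the definition of $A(s,t)$. First I would establish \eqref{eq:JacobianFundMatrix}: subtracting the Jacobian SDE \eqref{eq:SDEJacobian} at time $s$ from its value at time $t$ gives
$$
J(t) = J(s) + \int_s^t \nabla_x b(r, \omega, X(r)) J(r) dr + \int_s^t \nabla_x \sigma(r, \omega, X(r)) J(r) dW(r).
$$
Since $J(s)^{-1}$ is $\cF_s$-measurable, and $\bP$-a.s.\ well defined because the Stochastic Wronskian $D(\cdot)$ of \eqref{eq:WronskianExplicit} is a.s.\ positive, it may be pulled through both integrals over $[s,t]$; using $J(s)J(s)^{-1} = I_d$ and $J(r)J(s)^{-1} = J_s(r)$ yields \eqref{eq:JacobianFundMatrix}. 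In particular $J_s(s) = I_d$ and $J_s$ solves the homogeneous system, so it is the fundamental matrix of \eqref{eq:SDEMallDeriv}.

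Next I would read off the representation. Writing $\Psi$ for the fundamental matrix of \eqref{eq:LinearSDEFundamental} with $B = \nabla_x b(\cdot, X(\cdot))$ and $\Sigma = \nabla_x \sigma(\cdot, X(\cdot))$, one has $J = \Psi$ (same SDE) and $J_s(r) = \Psi(r)\Psi(s)^{-1}$, hence $J_s(r)^{-1} = \Psi(s)\Psi(r)^{-1}$. Equation \eqref{eq:SDEMallDeriv} is a linear SDE of the form \eqref{eq:SDEDeriv} with initial datum $\sigma(s, \omega, X(s))$ placed at time $s$, drift forcing $U(s,\cdot)$, diffusion forcing $V(s,\cdot)$, and state coefficients $\nabla_x b(\cdot, X(\cdot))$, $\nabla_x \sigma(\cdot, X(\cdot))$. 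Applying the explicit solution of Theorem \ref{theorem:ExistenceUniquenessMaoLinearSDE} (started at $s$) and substituting $\Psi(t)\Psi(s)^{-1} = J_s(t)$, $\Psi(s)\Psi(r)^{-1} = J_s(r)^{-1}$ factors the solution as $D_sX(t) = J_s(t)A(s,t)$ with $A(s,t)$ exactly as stated, the term $-\langle \nabla_x \sigma, V\rangle_{\bR^m}$ being precisely the correction $\langle \Sigma, \sigma\rangle_{\bR^m}$ of Theorem \ref{theorem:ExistenceUniquenessMaoLinearSDE}.

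To make that correction transparent, rather than rely on the factorisation algebra alone, I would alternatively verify directly: set $Z(t) = J_s(t)A(s,t)$ and apply It\^o's product rule. The term $(dJ_s)A$ reproduces $\nabla_x b \, Z \, dr + \nabla_x \sigma \, Z \, dW$; the term $J_s(dA)$ reproduces $(U - \langle \nabla_x \sigma, V\rangle_{\bR^m}) dr + V \, dW$ after the cancellation $J_s J_s^{-1} = I_d$; and the quadratic covariation contributes $d[J_s, A] = \sum_j (\nabla_x \sigma)^{(j)} J_s J_s^{-1} V^{(j)} dr = \langle \nabla_x \sigma, V\rangle_{\bR^m} dr$. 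The two bracketed $\langle \nabla_x \sigma, V\rangle_{\bR^m}$ terms cancel, so $Z$ solves \eqref{eq:SDEMallDeriv} with $Z(s) = \sigma(s, \omega, X(s))$, and uniqueness from Theorem \ref{theo:ExistenceUniqueness for Candidate DX} forces $Z = D_sX$.

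The main obstacle I anticipate is the It\^o bookkeeping in the non-commutative matrix setting: tracking which factors act by left versus right multiplication and confirming that $d[J_s, A]$ produces exactly $\langle \nabla_x \sigma, V\rangle_{\bR^m}$, which is what justifies the correction term in $A(s,t)$. A secondary point is the measurability and integrability of $J_s^{-1}$: since its moments may be infinite, I would keep the identification pathwise. Invertibility follows from the a.s.\ positivity of the Wronskian, and the uniqueness argument of Theorem \ref{theo:ExistenceUniqueness for Candidate DX} is itself pathwise, the difference of two solutions solving a homogeneous linear SDE and vanishing a.s., so the representation holds without any extra integrability of the inverse.
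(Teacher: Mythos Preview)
Your argument is correct and complete, but it follows a different route from the paper. The paper's proof is a terse sketch that repeats the projection machinery of Theorem \ref{theo:ExistenceUniqueness for Candidate DX}: it treats \eqref{eq:SDEMallDeriv} as an $L^2([0,T])$-valued SDE, projects onto finite-dimensional subspaces $V_n = \operatorname{span}\{e_1,\dots,e_n\}$, writes down the explicit solution in each projection via \cite{mao2008stochastic}*{Theorem 3.3.1}, and then passes to the limit by Dominated Convergence. You instead work directly for fixed $s$, identifying \eqref{eq:SDEMallDeriv} with the finite-dimensional linear SDE \eqref{eq:SDEDeriv} and applying Theorem \ref{theorem:ExistenceUniquenessMaoLinearSDE} (or, equivalently, verifying by the It\^o product rule that $J_s A$ solves the right equation and invoking uniqueness). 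Your route is more elementary and makes the It\^o correction $-\langle \nabla_x\sigma, V\rangle_{\bR^m}$ transparent; the paper's projection approach is heavier but stays within the $\cS^p(L^2)$ framework already built and avoids having to argue separately that the pointwise-in-$s$ identity assembles into the claimed process-level statement. Your observation that the integrability of $J_s^{-1}$ is not needed (only its a.s.\ existence via the Wronskian) is a point the paper leaves implicit.
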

\begin{proof}
The proof of this representation formula follows the same ideas as Theorem \ref{theo:ExistenceUniqueness for Candidate DX}. Equation \eqref{eq:SDEMallDeriv} is an infinite dimensional SDE, so we project from the infinite dimensional space into a finite dimensional space. We follow the method of \cite{mao2008stochastic}*{Theorem 3.3.1} to solve the solution explicitly in the projection space then use the Dominated Convergence Theorem to ensure the passage to the limit.

\end{proof}

\subsubsection*{Absolute Continuity}

In \cite{nualart2006malliavin}*{Theorem 2.3.1}, it is proved that the solution of a Stochastic Differential Equation with Lipschitz,  deterministic coefficients and elliptic diffusion term has a law which is absolutely continuous with respect to Lebesgue measure on $\bR^d$. This proof can be easily extended to the case where the drift term has monotone growth. 

\begin{theorem}
Let $X_x$ be solution to the SDE \eqref{eq:SDE} under Assumption \ref{Assumption:2} and with initial condition $X_x(0)=x\in \bR^d$. Suppose additionally that $\forall z\in\bR^d$ that 
\begin{align*}
z^TA(s, t) A(s, t)^Tz > \lambda(s, t) |z|^2\geq0, \quad \int_0^t \lambda(s, t) ds >0 \quad\bP\mbox{-almost surely.} 
\end{align*}
Then the law of $X_x(t)$ is absolutely continuous with respect to the Lebesgue measure on $\bR^d$. 
\end{theorem}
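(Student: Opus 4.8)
The plan is to apply the Bouleau--Hirsch criterion for absolute continuity, \cite{nualart2006malliavin}*{Theorem 2.1.2}: if an $\bR^d$-valued random vector $F$ has components in $\bD^{1,2}$ and its Malliavin covariance matrix $\gamma_F$ is $\bP$-almost surely invertible, then the law of $F$ is absolutely continuous with respect to Lebesgue measure on $\bR^d$. Under Assumption \ref{Assumption:2}, Theorem \ref{theo-Mall-diff-monotone-SDEs} already provides $X_x(t)\in\bD^{1,p}\subset\bD^{1,2}$ (since $p>2$) together with the representation $D_sX(t)=J_s(t)A(s,t)$ for $s<t$ and $D_sX(t)=0$ for $s>t$. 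Hence it only remains to show that the Malliavin covariance matrix
\begin{align*}
\gamma_{X_x(t)} = \int_0^t D_sX(t)\big(D_sX(t)\big)^T\, ds = \int_0^t J_s(t)A(s,t)A(s,t)^T J_s(t)^T\, ds
\end{align*}
is invertible $\bP$-almost surely.

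First I would record that $J_s(t)$ is $\bP$-almost surely invertible: the Proposition establishing $K(t)J(t)=I_d$ shows $\det J(t)\neq 0$ (equivalently, the Stochastic Wronskian $D(t)$ of \eqref{eq:WronskianExplicit} is a.s. nonzero), and therefore $J_s(t)=J(t)J(s)^{-1}$ is invertible for every $s\leq t$. Fixing a unit vector $z\in\bR^d$ and setting $w_s=J_s(t)^Tz$, the ellipticity hypothesis applied to $w_s$ yields
\begin{align*}
z^T\gamma_{X_x(t)}z = \int_0^t w_s^T A(s,t)A(s,t)^T w_s\, ds \geq \int_0^t \lambda(s,t)\,\big|J_s(t)^Tz\big|^2\, ds .
\end{align*}

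The key step, and the main obstacle, is to upgrade this pointwise-in-$z$ lower bound into $\bP$-almost sure invertibility of $\gamma_{X_x(t)}$, i.e. strict positivity of $z^T\gamma_{X_x(t)}z$ \emph{simultaneously} for all $z\neq 0$; I would argue pathwise to prevent the exceptional null set from depending on $z$. Outside a single null set one has, at once, that $s\mapsto J_s(t)(\omega)$ is continuous and invertible on the compact interval $[0,t]$ and that $\int_0^t\lambda(s,t)(\omega)\,ds>0$. For such $\omega$ the continuous map $(s,z)\mapsto |J_s(t)(\omega)^Tz|^2$ is strictly positive on the compact set $[0,t]\times\{|z|=1\}$, hence bounded below by some $c(\omega)>0$, so that
\begin{align*}
\min_{|z|=1} z^T\gamma_{X_x(t)}(\omega)\,z \geq c(\omega)\int_0^t\lambda(s,t)(\omega)\,ds > 0 .
\end{align*}
Thus the smallest eigenvalue of $\gamma_{X_x(t)}(\omega)$ is positive and the matrix is invertible; since this holds off a null set, $\gamma_{X_x(t)}$ is $\bP$-almost surely invertible and the Bouleau--Hirsch criterion gives the claim. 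I would finally note that the low integrability peculiar to this setting causes no difficulty, since absolute continuity (as opposed to smoothness of the density) requires only first-order Malliavin differentiability in $\bD^{1,2}$, which Theorem \ref{theo-Mall-diff-monotone-SDEs} supplies.
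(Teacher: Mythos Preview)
Your proposal is correct and follows essentially the same route as the paper: invoke the Bouleau--Hirsch criterion (the paper cites \cite{nualart2006malliavin}*{Corollary 2.1.2}), use the representation $D_sX(t)=J_s(t)A(s,t)$ to write the Malliavin covariance matrix, and conclude positive definiteness from the ellipticity of $A$ together with the invertibility of $J_s(t)$. The paper's version factorises as $Q(t)=J(t)\big(\int_0^t K(s)A(s,t)A(s,t)^TK(s)^T\,ds\big)J(t)^T$ and writes the one-line bound $z^TQ(t)z\geq \int_0^t\lambda(s,t)|K(s)|^2\,ds\cdot|J(t)|^2|z|^2$; your compactness argument to produce a pathwise constant $c(\omega)>0$ bounding $\min_{|z|=1}|J_s(t)^Tz|^2$ uniformly in $s$ is a cleaner way of handling uniformity over $z$ and avoids any ambiguity about which matrix norm is meant in the paper's lower bound.
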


\begin{proof}
For this proof, recall \cite{nualart2006malliavin}*{Corollary 2.1.2} and following that our strategy is to show that the Malliavin matrix is $\bP$-almost surely non zero. 

The Malliavin Matrix, $Q(t)$ is defined to be
\begin{align*}
Q(t) = \int_0^t D_sX(t) D_sX(t)^T ds = J(t) \Bigg( \int_0^t K(s) A(s, t) A(s, t)^T K(s)^T ds \Bigg) J(t)^T.
\end{align*}
Therefore, for $z\in \bR^d$ we have $z^TQ(t) z \geq \int_0^t \lambda(s, t) |K(s)|^2 ds\cdot |J(t)|^2 \cdot |z|^2$ which is greater than zero because $|J|, |K|>0$. 
\end{proof}

\begin{remark}
Observe that the Ellipticity condition for $\sigma$ is no longer enough to ensure that the law is absolutely continuous. When $b$ and $\sigma$ are deterministic, $U$ and $V$ are uniformly 0 and Ellipticity is enough. 
\end{remark}

\subsection{Bismut-Elworthy-Li formula}
In \cite{Elworthy1992}, the author uses Malliavin Differentiability of an SDE $X_x$ to prove differentiability for functions of the form
$u(x) = \bE[ \phi(X_{x}(t)) ]$
where $\phi$ is assumed to be a continuous function and $t\in[0,T]$. This was later extended in \cite{Fournie1999} and \cite{Fournie2001} to cover functions $\phi$ which are integrable and even measurable (provided $u$ remains finite). 

Define for $t\in(0,T]$ the set $
\Gamma_t = \big\{ a\in L^2([0,T]); \int_0^t a(s) ds = 1 \big\}$. 

\begin{theorem}[Bismut-Elworthy-Li formula]
Let $\Phi:\bR^d \to \bR^d$ be a bounded, measurable function. Let $X_x$ be solution to the SDE \eqref{eq:SDE} under Assumption \ref{Assumption:2} and with initial condition $X_x(0)=x\in \bR^d$. Let $t\in(0,T]$. Suppose additionally that ($\delta(\cdot)$ stands for the usual Skorokhod integral, see \cite{nualart2006malliavin})
\begin{enumerate}
\item $\forall s\in [0,t]$ the matrix $A(s, t)$ has a right inverse, 
\item $\exists a \in \Gamma_t$ such that $a(\cdot)A(\cdot, t)^{-1} J(\cdot) \in \mbox{dom}(\delta)$. 
\end{enumerate}
Then
\begin{equation*}
\label{eq:BisElworthLi}
\nabla_x \bE\Big[ \Phi(X_x(t)\Big] = \bE\Big[ \Phi(X_x(t))  \delta\Big( a(s) A(s, t)^{-1} J(s) \Big) \Big].
\end{equation*}
\end{theorem}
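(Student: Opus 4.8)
**

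The plan is to derive the Bismut-Elworthy-Li formula by combining the representation formula $D_sX(t) = J_s(t)A(s,t) = J(t)J(s)^{-1}A(s,t)$ with the integration-by-parts (duality) relation between the Malliavin derivative and the Skorokhod integral $\delta$. The key identity I would establish first is that the Malliavin directional derivative of $X_x(t)$ against a suitable $H$-valued process reproduces the Jacobian $J(t)$, up to the integral normalisation enforced by membership in $\Gamma_t$.

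\emph{Step 1 (reconstructing the Jacobian from the Malliavin derivative).} Using the representation $D_sX(t) = J(t)J(s)^{-1}A(s,t)$ valid for $s<t$, and hypothesis (1) that $A(s,t)$ has a right inverse, I would write $J(s) = J(t)^{-1} D_sX(t) A(s,t)^{-1} J(s)$, so that $D_sX(t) \, A(s,t)^{-1} J(s) = J(t)$ for each $s\in[0,t]$. Integrating against $a\in\Gamma_t$ and using $\int_0^t a(s)\,ds = 1$ gives
\begin{align*}
\int_0^t D_sX(t)\, a(s) A(s,t)^{-1} J(s)\, ds = J(t)\int_0^t a(s)\,ds = J(t).
\end{align*}
By Theorem \ref{thm:ClassicalDifferentiability} the right-hand side $J(t)$ is exactly the parametric (spatial) derivative $\nabla_x X_x(t)$, which is the crucial link: the spatial gradient has been re-expressed as a Malliavin directional derivative of $X_x(t)$ in the direction of the process $u(s) := a(s) A(s,t)^{-1} J(s)$.

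\emph{Step 2 (chain rule and duality).} For $\Phi$ smooth and bounded, the chain rule for the Malliavin derivative gives $D_s\big(\Phi(X_x(t))\big) = \nabla\Phi(X_x(t))\, D_sX(t)$. Pairing with $u$ and using Step 1,
\begin{align*}
\int_0^t D_s\big(\Phi(X_x(t))\big)\, u(s)\, ds = \nabla\Phi(X_x(t))\, J(t) = \nabla_x\big(\Phi(X_x(t))\big).
\end{align*}
Taking expectations and applying the Malliavin integration-by-parts formula $\bE\big[\int_0^t D_s F\, u(s)\,ds\big] = \bE\big[F\,\delta(u)\big]$, valid since hypothesis (2) guarantees $u \in \mathrm{dom}(\delta)$, yields
\begin{align*}
\nabla_x \bE\big[\Phi(X_x(t))\big] = \bE\Big[\Phi(X_x(t))\,\delta\big(a(s)A(s,t)^{-1}J(s)\big)\Big],
\end{align*}
where interchanging $\nabla_x$ and $\bE$ is justified by the uniform $\cS^p$-bounds on $J$ from Theorem \ref{thm:FrechetExistence} and dominated convergence. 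This is the claimed formula for smooth $\Phi$.

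\emph{Step 3 (extension to bounded measurable $\Phi$).} The final step removes the smoothness assumption by approximation: take smooth bounded $\Phi_n \to \Phi$ pointwise with $\sup_n\|\Phi_n\|_\infty < \infty$. The left-hand side requires care since $\nabla_x\bE[\Phi_n(X_x(t))]$ need not converge termwise, but the right-hand side $\bE[\Phi_n(X_x(t))\,\delta(u)]$ converges by dominated convergence once one knows $\delta(u)\in L^1$ (indeed $L^2$, from hypothesis (2) and the moment bounds). I expect \textbf{this extension to be the main obstacle}: one must argue that $x\mapsto\bE[\Phi(X_x(t))]$ is differentiable with the stated derivative, which is done by showing the right-hand side is continuous in $x$ and then integrating, invoking absolute continuity of the law of $X_x(t)$ (established earlier in this section) to handle the pointwise-convergence of $\Phi_n(X_x(t))$ on the relevant null sets. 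The delicate point specific to our setting is that $A(s,t)^{-1}$ and $J(s)$ involve the superlinear drift through $\nabla_x b$, so their integrability is not automatic; membership of $u$ in $\mathrm{dom}(\delta)$ with an $L^2$ Skorokhod integral must be used precisely to control these terms, rather than assuming finite moments of all orders as in the classical Lipschitz case.
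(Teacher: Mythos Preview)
Your proposal is correct and follows essentially the same route as the paper's proof: establish the identity $D_sX(t)\,A(s,t)^{-1}J(s)=J(t)$ from the representation formula, integrate against $a\in\Gamma_t$, apply the chain rule and Malliavin integration by parts for smooth $\Phi$, then extend by approximation. (Your intermediate line $J(s)=J(t)^{-1}D_sX(t)A(s,t)^{-1}J(s)$ is a slip---the right-hand side equals $I_d$, not $J(s)$---but the conclusion $D_sX(t)A(s,t)^{-1}J(s)=J(t)$ that you draw from it is correct and is exactly the identity the paper uses.)
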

\begin{proof}
We give only a sketch of the proof. For a more detailed proof, see \cite{Fournie1999} and \cite{Fournie2001}. First suppose that $\Phi$ is continuously differentiable with bounded derivatives, then 
\begin{align*}
\nabla_x \bE\Big[ \Phi(X_x(t))\Big] =& \bE\Big[ \nabla_x \Phi(X_x(t))\Big] = \bE\Big[ \nabla \Phi(X_x(t)) J(t) \Big]
=
\bE\Big[ \nabla \Phi(X_x(t)) D_s X_x(t) A(s, t)^{-1} J(s) \Big].
\end{align*}
Multiplying both sides by $a\in \Gamma_s$, integrating over $[0,t]$ (using $\int_0^t a(s) ds = 1$ on the LHS) and Fubini gives
\begin{align*}
\nabla_x \bE\Big[ \Phi(X_x(t))\Big] =& \bE\Big[ \int_0^t a(s) \nabla \Phi(X_x(t)) D_s X_x(t) A(s, t)^{-1} J(s) ds \Big]
\\
=& \bE\Big[ \int_0^t  D_s \Big(\Phi(X_x(t))\Big) a(s)A(s, t)^{-1} J(s) ds \Big]
=\bE\Big[ \Phi\big(X_x(t)\big) \delta\Big( a(s)A(s, t)^{-1} J(s)\Big) \Big],
\end{align*}
where in the last line we used integration-by-parts formula. 

Secondly, let $\Phi$ be bounded and measurable. Then using that $C_b^1$ is dense  in the set of bounded measurable functions, we approximate $\Phi$ by a sequence of functions $\Phi_n\in C_b^1$. Finally, using a domination argument it is shown that one can swap the limits and integrals and one reaches the conclusion. 
\end{proof}

\appendix

\section{Proofs}
\subsection{The existence and uniqueness theorem plus moment calculations}
\label{Appendix:MomentCalculations}

\begin{proof}[Proof of Theorem \ref{theorem:ExistenceUniquenessMao}]
As $p\geq 2$, we also have that
\begin{align*}
\bE\Big[ \int_0^T \Big| \sigma(s, \omega, X_\theta(s)) \Big|^2 ds\Big] \leq 2 \bE\Big[ \int_0^T \Big| \sigma(s, \omega, 0)\Big|^2 ds\Big] + 2L^2 T \bE\Big[ \| X\|_{\infty}^2 \Big] < \infty. 
\end{align*}
This means we can use \cite{oksendal2003stochastic}*{Theorem 3.2.5} to get 
$\bP$-almost sure continuity of the stochastic integral. The drift term is a Lebesgue integral so likewise is continuous in time. Hence $\bP$-almost sure continuity of $t\mapsto X_\theta(t)$ is immediate. 

Finally, let $t\in[0,T]$ and $\xi, \theta \in L^p(\cF_0; \bR^d; \bP)$. We have
\begin{align*}
X_{\xi} (t) - X_{\theta} (t) = \xi-\theta &+ \int_0^t \Big[ b(s, \omega, X_{\xi}(s)) - b(s, \omega, X_{\theta}(s))\Big] ds
\\
&+\int_0^t \Big[ \sigma(s, \omega, X_{\xi}(s)) - \sigma(s, \omega, X_{\theta}(s))\Big] dW(s). 
\end{align*}
We write $Q(s) = X_{\xi}(s) - X_\theta(s)$ and by applying It\^o's formula with $f(x) = |x|^p$ we get
\begin{align*}
|Q(t)|^p = |\xi-\theta|^p &+ p\int_0^t |Q(s)|^{p-2} \Big\langle Q(s), b(s, \omega, X_{\xi}(s)) - b(s, \omega, X_{\theta}(s)) \Big\rangle ds
\\
&+p \int_0^t |Q(s)|^{p-2} \Big\langle Q(s), \Big[ \sigma(s, \omega, X_{\xi}(s)) - \sigma(s, \omega, X_{\theta}(s)) \Big] dW(s) \Big\rangle 
\\
&+\frac{p}{2} \int_0^t |Q(s)|^{p-2} \cdot \Big| \sigma(s, \omega, X_{\xi}(s)) - \sigma(s, \omega, X_{\theta}(s)) \Big|^2 ds
\\
&+ \frac{p(p-2)}{2} \int_0^t |Q(s)|^{p-4} \Big| Q(s)^T \cdot \big[\sigma(s, \omega, X_{\xi}(s)) - \sigma(s, \omega, X_{\theta}(s))\big]\Big|^2 ds. 
\end{align*}
Taking a supremum over time and then taking expectations, we get
\begin{align}
\nonumber
\bE\Big[ &\| X_{\xi} - X_\theta\|_\infty^p \Big] = \bE\Big[ \| Q\|_\infty^p \Big] \leq \bE\Big[ |\xi-\theta|^p\Big] 
\\
\label{eq:FrechetQ1}
&+ p\bE\Big[ \int_0^T |Q(s)|^{p-2} \Big| \Big\langle Q(s), b(s, \omega, X_{\xi}(s)) - b(s, \omega, X_{\theta}(s)) \Big\rangle\Big| ds\Big]
\\
\label{eq:FrechetQ2}
&+p \bE \Big[ \sup_{t\in[0, T]} \int_0^t |Q(s)|^{p-2} \Big\langle Q(s), \Big[ \sigma(s, \omega, X_{\xi}(s)) - \sigma(s, \omega, X_{\theta}(s)) \Big] dW(s) \Big\rangle \Big]
\\
\label{eq:FrechetQ3}
&+ \frac{p}{2} \bE\Big[ \int_0^T |Q(s)|^{p-2} \Big| \sigma(s, \omega, X_{\xi}(s)) - \sigma(s, \omega, X_{\theta}(s)) \Big|^2 ds \Big]
\\
\label{eq:FrechetQ4}
&+ \frac{p(p-2)}{2} \bE\Big[ \int_0^T |Q(s)|^{p-4} | Q(s)^T \Big[\sigma(s, \omega, X_{\xi}(s)) - \sigma(s, \omega, X_{\theta}(s))\Big]|^2 ds \Big]. 
\end{align}
Firstly by monotonicity of $b$ we have 
$\eqref{eq:FrechetQ1} \leq pL \int_0^T \bE\Big[ \|Q\|_{\infty, s}^p\Big] ds$. Secondly, by the Burkholder-Davis-Gundy inequality we have
\begin{align*}
\eqref{eq:FrechetQ2} 
\leq& pC_1 L \bE\Big[ \|Q\|_{\infty}^{\tfrac{p}{2}} \Big( \int_0^T \|Q\|_{\infty, s}^p ds\Big)^{\tfrac{1}{2}} \Big] \leq \frac{\bE\Big[ \|Q\|_{\infty}^p\Big]}{2} + \frac{p^2C_1^2L^2}{2} \int_0^T \bE\Big[ \|Q\|_{\infty, s}^p \Big] ds. 
\end{align*}
Finally, we have
\begin{align*}
\eqref{eq:FrechetQ3} \leq \frac{pL}{2} \int_0^T \bE\Big[ \|Q\|_{\infty, s}^p \Big] ds
\qquad 
\textrm{and}
\qquad 
\eqref{eq:FrechetQ4}\leq \frac{p(p-2)L}{2} \int_0^T \bE\Big[ \|Q\|_{\infty, s}^p \Big] ds. 
\end{align*}
Gathering all the estimates we have finally
\begin{align*}
\frac12{\bE\Big[ \| X_{\xi} - X_\theta\|_\infty^p \Big] } 
\leq \bE\Big[ |\xi-\theta|^p\Big] + \widehat{C} \int_0^T\bE\Big[ \| X_{\xi} - X_\theta\|_{\infty, s}^p \Big] ds,
\end{align*}
where $\widehat{C}={p^2L(LC_1^2 +2)}/{2}$.  Gr\"onwall's inequality yields that 
$\bE\Big[ \| X_{\xi} - X_\theta\|_\infty^p \Big] \lesssim \bE[ |\xi-\theta|^p]$.
\end{proof}

\begin{proof}[Moment Calculations for Theorem \ref{theorem:ExistenceUniquenessMaoLinearSDE} ]
Fix $t\in [0,T]$ and using It\^o's formula with $f(x) = |x|^p$ and $X_\theta$ satisfying Equation \eqref{eq:SDEDeriv}, we get that
\begin{align*}
|X_\theta(t)|^p = |\theta|^p 
&+ p\int_0^t |X_\theta(s)|^{p-2} \langle X_\theta(s), B(s, \omega) X_\theta(s) \rangle ds
+ p\int_0^t |X_\theta(s)|^{p-2} \langle X_\theta(s), b(s, \omega) \rangle ds
\\
&+ p\int_0^t |X_\theta(s)|^{p-2} \langle X_\theta(s), \Sigma(s, \omega) X_\theta(s) dW(s)\rangle
+ p\int_0^t |X_\theta(s)|^{p-2} \langle X_\theta(s), \sigma(s, \omega) dW(s)\rangle 
\\
&+\frac{p}{2} \int_0^t |X_\theta(s)|^{p-2} \Big[ \Sigma(s, \omega) X_\theta(s) + \sigma(s, \omega)\Big]^2 ds 
\\
&+ \frac{p(p-2)}{2} \int_0^t |X_\theta(s)|^{p-4} \Big\langle X_\theta(s), \big[ \Sigma(s, \omega) X_\theta(s) + \sigma(s, \omega)\big]\Big\rangle^2 ds.
\end{align*}
Take a supremum over $t\in[0,T]$ and expectations to have
\begin{align}
\nonumber
\bE\Big[ \|X_\theta\|^p\Big] \leq& \bE\Big[ |\theta|^p \Big]
\\
\label{eq:LinearSDEExist1}
&+ p\bE\Big[ \int_0^T |X_\theta(s)|^{p-2} \Big\langle X_\theta(s), \big[B(s, \omega) X_\theta(s) + b(s, \omega)\big]\Big\rangle ds \Big]
\\
\label{eq:LinearSDEExist2}
&+ p\bE\Big[ \sup_{t\in[0,T]} \int_0^t |X_\theta(s)|^{p-2} \Big\langle X_\theta(s), \big[\Sigma(s, \omega) X_\theta(s) + \sigma(s, \omega)\big]dW(s)\Big\rangle \Big]
\\
\nonumber
&+ \frac{p(p-1)}{2} \bE\Big[ \int_0^T |X_\theta(s)|^{p-2} \Big[ \Sigma(s, \omega) X_\theta(s) + \sigma(s, \omega)\Big]^2 ds \Big].
\end{align}
Fix $n\in\bN$ to be chosen later. Throughout the next three arguments, we use Young's Inequality. Using the negative semidefinite properties of $B$, we get that
\begin{align*}
\eqref{eq:LinearSDEExist1}\leq pL \int_0^T \bE\Big[ \|X_\theta\|_{\infty, s}^p \Big] ds + \frac{\bE\Big[ \|X_\theta\|_\infty^p \Big]}{n} + n^{p-1} (p-1)^{p-1}\times \bE\Big[ \Big( \int_0^T |b(s, \omega)| ds\Big)^p \Big]. 
\end{align*}
Secondly, using the Burkholder-Davis-Gundy Inequality gives that
\begin{align*}
\eqref{eq:LinearSDEExist2}\leq& \frac{2 \bE\Big[ \|X_\theta\|_\infty^p \Big]}{n} + \frac{p^2 C_1^2 n \sqrt{2}}{4} \int_0^T \bE\Big[ \|X_\theta\|_{\infty, s}^p\Big] \| \Sigma(s, \cdot)\|_{L^{\infty}}^2 ds 
\\
&+ C_1^p n^{p-1} p^{\tfrac{p}{2}} (p-2)^{\tfrac{p-2}{2}} \frac{2^{\tfrac{p}{4}}}{2^{p-1}} \cdot \bE\Big[ \Big( \int_0^T |\sigma(s, \omega)|^2 ds \Big)^{\tfrac{p}{2}} \Big]. 
\end{align*}
Thirdly, we have
\begin{align*}
\eqref{eq:LinearSDEExist2}\leq& \frac{\bE\Big[ \|X_\theta\|_\infty^p \Big]}{n} + p(p-1)\int_0^T \bE\Big[ \|X_\theta\|_{\infty, s}^p \Big] \| \Sigma (s, \cdot)\|_{L^\infty} ds 
\\
&+ 2[n(p-2)]^{\tfrac{p-2}{2}} (p-1)^{\tfrac{p}{2}} \bE\Big[ \Big( \int_0^T |\sigma(s, \omega)|^2 ds \Big)^{\tfrac{p}{2}} \Big]. 
\end{align*}
When applying Young's Inequality for the case $p=2$, we use the convention that $0^0=1$. Adding these together, we have that there are constants $\widetilde{C_1}, \widetilde{C_2}$ and $\widetilde{C_3}$ such that
\begin{align*}
\frac15{\bE\Big[ \|X_\theta\|_{\infty}^p\Big] } \leq& \bE\Big[ |\theta|^p\Big] + \widetilde{C_1}\bE\Big[ \Big( \int_0^T |b(s, \omega)| ds\Big)^p \Big] + \widetilde{C_2} \bE\Big[ \Big( \int_0^T |\sigma(s, \omega)|^2 ds \Big)^{\tfrac{p}{2}} \Big]
\\
&+ \widetilde{C_3} \int_0^T \big[ 1+ \| \Sigma(s, \cdot)\|_{L^\infty} \big] \bE\Big[ \|X_\theta\|_{\infty, s}^p\Big] ds.
\end{align*}
Applying Gr\"onwall Inequality yields
\begin{align*}
\bE\Big[ \|X_\theta\|_{\infty}^p\Big] \leq 5& \Bigg( \bE\Big[ |\theta|^p\Big] + \widetilde{C_1}\bE\Big[ \Big( \int_0^T |b(s, \omega)| ds\Big)^p \Big] + \widetilde{C_2} \bE\Big[ \Big( \int_0^T |\sigma(s, \omega)|^2 ds \Big)^{\tfrac{p}{2}} \Big]\Bigg) 
\\
&\times \exp\Big( 5\widetilde{C_3} \int_0^T  \big[ 1+ \| \Sigma(s, \cdot)\|_{L^\infty} \big] ds\Big).
\end{align*}
\end{proof}



\bibliographystyle{amsplain}

\begin{bibdiv}
\begin{biblist}

\bib{Cerrai2001}{book}{
      author={Cerrai, Sandra},
       title={Second order {PDE}'s in finite and infinite dimension},
      series={Lecture Notes in Mathematics},
   publisher={Springer-Verlag, Berlin},
        date={2001},
      volume={1762},
        ISBN={3-540-42136-X},
         url={https://doi.org/10.1007/b80743},
        note={A probabilistic approach},
}

\bib{crisan2017smoothing}{article}{
      author={Crisan, Dan},
      author={McMurray, Eamon},
       title={Smoothing properties of {M}c{K}ean--{V}lasov {SDE}s},
        date={2017},
     journal={Probability Theory and Related Fields},
       pages={1\ndash 52},
}

\bib{Elworthy1992}{incollection}{
      author={Elworthy, K.~D.},
       title={Stochastic flows on {R}iemannian manifolds},
        date={1992},
   booktitle={Diffusion processes and related problems in analysis, volume ii:
  Stochastic flows},
      editor={Pinsky, Mark~A.},
      editor={Wihstutz, Volker},
   publisher={Birkh{\"a}user Boston},
     address={Boston, MA},
       pages={37\ndash 72},
         url={https://doi.org/10.1007/978-1-4612-0389-6_2},
}

\bib{Fournie2001}{article}{
      author={Fourni{\'e}, Eric},
      author={Lasry, Jean-Michel},
      author={Lebuchoux, J{\'e}r{\^o}me},
      author={Lions, Pierre-Louis},
       title={Applications of {M}alliavin calculus to {M}onte-{C}arlo methods
  in finance. {II}},
        date={2001Apr},
        ISSN={0949-2984},
     journal={Finance and Stochastics},
      volume={5},
      number={2},
       pages={201\ndash 236},
         url={https://doi.org/10.1007/PL00013529},
}

\bib{Fournie1999}{article}{
      author={Fourni{\'e}, Eric},
      author={Lasry, Jean-Michel},
      author={Lebuchoux, J{\'e}r{\^o}me},
      author={Lions, Pierre-Louis},
      author={Touzi, Nizar},
       title={Applications of {M}alliavin calculus to {M}onte-{C}arlo methods
  in finance},
        date={1999Aug},
        ISSN={0949-2984},
     journal={Finance and Stochastics},
      volume={3},
      number={4},
       pages={391\ndash 412},
         url={https://doi.org/10.1007/s007800050068},
}

\bib{geiss2016malliavin}{article}{
      author={Geiss, Christel},
      author={Steinicke, Alexander},
       title={Malliavin derivative of random functions and applications to
  {L}{\'e}vy driven {BSDEs}},
        date={2016},
     journal={Electronic Journal of Probability},
      volume={21},
}

\bib{HutzenthalerJentzenKloeden2011}{article}{
      author={Hutzenthaler, Martin},
      author={Jentzen, Arnulf},
      author={Kloeden, Peter~E.},
       title={Strong and weak divergence in finite time of
  {E}uler{\textquoteright}s method for stochastic differential equations with
  non-globally {L}ipschitz continuous coefficients},
        date={2011},
        ISSN={1364-5021},
     journal={Proceedings of the Royal Society of London A: Mathematical,
  Physical and Engineering Sciences},
      volume={467},
      number={2130},
       pages={1563\ndash 1576},
         url={http://rspa.royalsocietypublishing.org/content/467/2130/1563},
}

\bib{imkeller2016note}{article}{
      author={Imkeller, Peter},
      author={Mastrolia, Thibaut},
      author={Possama\"\i, Dylan},
      author={R\'eveillac, Anthony},
       title={A note on the {M}alliavin-{S}obolev spaces},
        date={2016},
        ISSN={0167-7152},
     journal={Statist. Probab. Lett.},
      volume={109},
       pages={45\ndash 53},
         url={https://doi.org/10.1016/j.spl.2015.08.020},
}

\bib{oksendal2003stochastic}{book}{
      author={{\O}ksendal, B.},
       title={Stochastic differential equations: An introduction with
  applications},
      series={Hochschultext / Universitext},
   publisher={Springer},
        date={2003},
        ISBN={9783540047582},
         url={https://books.google.co.uk/books?id=kXw9hB4EEpUC},
}

\bib{Kunita1990}{book}{
      author={Kunita, Hiroshi},
       title={{S}tochastic flows and stochastic differential equations},
      series={Cambridge Studies in Advanced Mathematics},
   publisher={Cambridge University Press},
     address={Cambridge},
        date={1990},
      volume={24},
        ISBN={0-521-35050-6},
}

\bib{mao2008stochastic}{book}{
      author={Mao, Xuerong},
       title={Stochastic differential equations and applications},
   publisher={Horwood},
        date={2008},
}

\bib{MastroliaPossamaiReveillac2017}{article}{
      author={Mastrolia, Thibaut},
      author={Possama\"\i, Dylan},
      author={R\'eveillac, Anthony},
       title={On the {M}alliavin differentiability of {BSDE}s},
        date={2017},
        ISSN={0246-0203},
     journal={Ann. Inst. Henri Poincar\'e Probab. Stat.},
      volume={53},
      number={1},
       pages={464\ndash 492},
         url={https://doi.org/10.1214/15-AIHP723},
}

\bib{mode2012stochastic}{book}{
      author={Mode, C.J.},
      author={Sleeman, C.K.},
      author={Scientific, W.},
       title={Stochastic processes in genetics and evolution: Computer
  experiments in the quantification of mutation and selection},
   publisher={World Scientific Publishing Company},
        date={2012},
        ISBN={9789814350686},
         url={https://books.google.co.uk/books?id=Ek-6CgAAQBAJ},
}

\bib{nualart2006malliavin}{book}{
      author={Nualart, David},
       title={The {M}alliavin calculus and related topics},
   publisher={Springer},
        date={2006},
      volume={1995},
}

\bib{RIEDEL2017283}{article}{
      author={Riedel, S.},
      author={Scheutzow, M.},
       title={Rough differential equations with unbounded drift term},
        date={2017},
        ISSN={0022-0396},
     journal={Journal of Differential Equations},
      volume={262},
      number={1},
       pages={283 \ndash  312},
  url={http://www.sciencedirect.com/science/article/pii/S0022039616302947},
}

\bib{sugita1985}{article}{
      author={Sugita, Hiroshi},
       title={On a characterization of the {S}obolev spaces over an abstract
  {W}iener space},
        date={1985},
        ISSN={0023-608X},
     journal={J. Math. Kyoto Univ.},
      volume={25},
      number={4},
       pages={717\ndash 725},
         url={https://doi.org/10.1215/kjm/1250521019},
}

\bib{tahmasebi2013weak}{article}{
      author={Tahmasebi, Mahdieh},
      author={Zamani, Shiva},
       title={Weak differentiability of solutions to {SDE}s with semi-monotone
  drifts},
        date={2013},
     journal={arXiv preprint arXiv:1309.0619},
}

\bib{UestuenelZakai2000}{book}{
      author={\"Ust\"unel, A.~S\"uleyman},
      author={Zakai, Moshe},
       title={Transformation of measure on {W}iener space},
      series={Springer Monographs in Mathematics},
   publisher={Springer-Verlag, Berlin},
        date={2000},
        ISBN={3-540-66455-6},
         url={https://doi.org/10.1007/978-3-662-13225-8},
}

\bib{Zhang2016}{article}{
      author={Zhang, Xicheng},
       title={Stochastic differential equations with {S}obolev diffusion and
  singular drift and applications},
        date={2016},
        ISSN={1050-5164},
     journal={Ann. Appl. Probab.},
      volume={26},
      number={5},
       pages={2697\ndash 2732},
         url={https://doi.org/10.1214/15-AAP1159},
}

\end{biblist}
\end{bibdiv}

\end{document}